\newif\ifsiam     % SIAM
\newif\ifnummat   % Numerische Mathematik
\newif\ifcvs      % Computing and Visualization in Science
\newif\ifmoc      % Mathematics of Computation
\newif\ifanm      % Applied Numerical Mathematics
\newif\ifjcam     % Journal of Computational and Applied Mathematics
\newif\ifjcm      % Journal of Computational Mathematics
    \newtheorem{remark}{Remark}[section]
    \newtheorem{assumption}{Assumption}[section]
    \journalname{Numerische Mathematik}
    \DeclareMathAlphabet{\mathcal}{OMS}{cmsy}{m}{n}
    \journalname{Computing and Visualization in Science}
    \DeclareMathAlphabet{\mathcal}{OMS}{cmsy}{m}{n}
    \newtheorem{theorem}{Theorem}[section]
    \newtheorem{lemma}[theorem]{Lemma}
    \newtheorem{corollary}[theorem]{Corollary}
    \theoremstyle{definition}
    \newtheorem{algorithm}[theorem]{Algorithm}
    \theoremstyle{remark}
    \newtheorem{remark}[theorem]{Remark}
    \newtheorem{assumption}[theorem]{Assumption}
    \numberwithin{equation}{section}
    \journal{Applied Numerical Mathematics}
    \newtheorem{theorem}{Theorem}
    \newtheorem{lemma}[theorem]{Lemma}
    \newtheorem{corollary}[theorem]{Corollary}
    \newtheorem{remark}{Remark}
    \journal{Journal of Computational and Applied Mathematics}
    \newtheorem{theorem}{Theorem}
    \newtheorem{lemma}[theorem]{Lemma}
    \newtheorem{remark}{Remark}
    \numberwithin{theorem}{section}
    \numberwithin{equation}{section}
    \numberwithin{equation}{section}
    \numberwithin{theorem}{section}
\definecolor{myblue}{rgb}{0.2,0.2,0.7}
\definecolor{mygreen}{rgb}{0,0.6,0}
\definecolor{mycyan}{rgb}{0,0.6,0.6}
\definecolor{myred}{rgb}{0.9,0.2,0.2}
\definecolor{mymagenta}{rgb}{0.9,0.2,0.9}
\definecolor{dark}{gray}{0.6}
\definecolor{light}{gray}{0.8}
\DeclareMathOperator{\tr}{tr}
\DeclareMathOperator{\osc}{osc}
\DeclareMathOperator{\gap}{gap}
\DeclareMathOperator{\tol}{tol}
\DeclareMathOperator{\supp}{supp}
\DeclareMathOperator{\up}{up}
\DeclareMathOperator{\low}{low}
\DeclareMathOperator{\loc}{loc}
\DeclareMathOperator{\opt}{opt}
\newcommand{\ab}[2]{\langle#1,#2\rangle}
\newcommand{\Eh}{\mathcal{E}_{h}}
\newcommand{\Rom}[1]{\uppercase\expandafter{\romannumeral#1}}
\newcommand{\Th}[0]{\mathcal{T}_{h}}
\newcommand{\lr}[1]{\llbracket#1\rrbracket}
\newcommand{\vertiii}[1]{{\left\vert\kern-0.25ex\left\vert\kern-0.25ex\left\vert #1 
    \right\vert\kern-0.25ex\right\vert\kern-0.25ex\right\vert}}
\def\rebAuthor{Randolph E. Bank}
\def\rebAuthor{Michael Holst}
\def\yulAuthor{Yuwen Li}
    \def\rebShortAuthor{R. E. Bank}
    \def\yulShortAuthor{Y.~Li}
    \def\rebShortAuthor{R. E. Bank}
    \def\yulShortAuthor{Y. Li}
\def\rebAddress{Department of Mathematics, University of California San Diego,
 La Jolla, California 92093-0112.}
\def\yulAddress{Department of Mathematics, University of California San Diego,
 La Jolla, California 92093-0112.}
\def\rebEmail{rbank@ucsd.edu}
\def\yulEmail{yul739@ucsd.edu}
\def\rebThanks{The work of this author was supported by the National
Science Foundation under  contract DMS-1318480.}
\def\yulThanks{The work of this author was supported by NSF.}
\title{Some convergence and optimality results of adaptive mixed methods in finite element exterior calculus}
\def\shortTitle{Convergence and optimality of AMFEM in FEEC}
\def\myKeywords{a posteriori error estimate, adaptive mixed finite element method, finite element exterior calculus, Hodge Laplacian, convergence,  optimality}
\def\myAMS{65N12, 65N15, 65N30, 65N50, 41A25}
\def\myAbstract{
In this paper, we present several new a posteriori error estimators and two adaptive mixed finite element methods \textsf{AMFEM1} and \textsf{AMFEM2} for the Hodge Laplacian problem in finite element exterior calculus. We prove that \textsf{AMFEM1} and \textsf{AMFEM2} are both convergent starting from any initial coarse mesh. In addition, we prove the quasi-optimality of \textsf{AMFEM2}. Comparing to existing literature, our results work on Lipschitz domains with nontrivial cohomology and provide the first norm convergence and quasi-optimality results for the Hodge Laplacian.
}
\begin{document}

%%%%%%%%%%%%%%%%%%%%% SIAM Journal Macros %%%%%%%%%%%%%%%%%%%%%%%%%%%%%%%%%%%%

\ifsiam
  \author{\yulAuthor%
         \thanks{\yulAddress {Email:}\, \yulEmail.}
         %\and
         % \rebAuthor%
         %\thanks{\rebAddress {Email:}\, \rebEmail. \rebThanks}
}
  \maketitle

  \begin{abstract}\myAbstract\end{abstract}
  \begin{keywords}\myKeywords\end{keywords}
  \begin{AMS}\myAMS\end{AMS}
  \pagestyle{myheadings}
  \thispagestyle{plain}
  \markboth{\yulShortAuthor}{\shortTitle}

\fi   

%%%%%%%%%%%%%%%%%%%%% Numerische Mathematik %%%%%%%%%%%%%%%%%%%%%%%%%%%%%%%%%%

\ifnummat
   \author{%\rebAuthor%
          %\thanks{\rebShortAuthor : \rebThanks}
          %\and
          \yulAuthor%
          %\thanks{\yulShortAuthor : \yulThanks}
           }
  \institute{%\rebShortAuthor : \rebAddress\, {Email:}\rebEmail \\ 
              \yulShortAuthor : \yulAddress\, {Email:}\yulEmail}
  \date{Received: \today\  / Accepted: date}
  \maketitle
  \begin{abstract}\myAbstract\end{abstract}
  \begin{keywords}\myKeywords\end{keywords}
  \begin{subclass}\myAMS \end{subclass}
  \markboth{%\rebShortAuthor\ , 
  \yulShortAuthor }{\shortTitle}
\fi
%%%%%%%%%%%%%%%%%%%%%%%%%%%%%%%%%%%%%%%%%%%%%%%%%%%%%%%%%%%%%%%%%%%%%%%%%%%%%%

%%%%%%%%%%%%%%%%%%%%% Computing and Visualiztion in Science %%%%%%%%%%%%%%%%%%

\ifcvs
   \author{\rebAuthor%          
         \thanks{\rebShortAuthor : \rebThanks}
         \and
         \yulAuthor%
         \thanks{\yulShortAuthor : \yulThanks}
          }
  \institute{\rebShortAuthor : \rebAddress \, {Email:}\rebEmail \\ 
        \yulShortAuthor  : \yulAddress \, {Email:}\yulEmail }
  \date{Received: \today\  / Accepted: date}
  \maketitle
  \begin{abstract}\myAbstract\end{abstract}
  \begin{keywords}\myKeywords\end{keywords}
  \begin{subclass}\myAMS \end{subclass}
\fi

%%%%%%%%%%%%%%%%%%%%%%%%%%%%%%%%%%%%%%%%%%%%%%%%%%%%%%%%%%%%%%%%%%%%%%%%%%%%%%

%%%%%%%%%%%%%%%%%%%%% Mathematics of Computation %%%%%%%%%%%%%%%%%%%%%%%%%%%%%

\ifmoc
    \bibliographystyle{amsplain}
    %    Only \author and \address are required; other information is
    %    optional.  Remove any unused author tags.

    % \author[short version for running head]{name for top of paper}
    %\author[\rebShortAuthor]{\rebAuthor}
    %\address{\rebAddress}
    %\curraddr{}
    %\email{\rebEmail}
    %\thanks{\rebShortAuthor : \rebThanks}

    %    author two information
    \author[\yulShortAuthor]{\yulAuthor}
    \address{\yulAddress}
    %\curraddr{}
    \email{\yulEmail}
    %\thanks{\yulShortAuthor : \yulThanks}

    \subjclass[2010]{Primary \myAMS}
    \date{\today}
    %\dedicatory{}
    \begin{abstract}\myAbstract\end{abstract}
    \title[convergence and optimality of adaptive AMFEEC]{Some convergence and optimality results of adaptive mixed methods in finite element exterior calculus}
    \maketitle
\fi

%%%%%%%%%%%%%%%%%%%%%%%%%%%%%%%%%%%%%%%%%%%%%%%%%%%%%%%%%%%%%%%%%%%%%%%%%%%%%%

%%%%%%%%%%%%%%%%%%%%% Applied Numerical Mathematics %%%%%%%%%%%%%%%%%%%%%%%%%%
\ifanm
  \begin{frontmatter}

  \author{\rebAuthor\fnref{fn1}}
  \address{\rebAddress\, {Email:}\rebEmail}
  \fntext[fn1]{\rebThanks} 

  \author{\yulAuthor\corref{cor1}\fnref{nf2}}
  \address{\yulAddress\, {Email:}\yulEmail} 
   \cortext[cor1]{Corresponding Author}
  \fntext[fn2]{\yulThanks} 

  \begin{abstract}\myAbstract\end{abstract}
  \begin{keyword}\myKeywords\end{keyword}

  \end{frontmatter}
% \begin{AMS}\myAMS\end{AMS}

\fi

%%%%%%%%%%%%%%%%%%%%%%%%%%%%%%%%%%%%%%%%%%%%%%%%%%%%%%%%%%%%%%%%%%%%%%%%%%%%%%

%%%%%%%%%%%%%%%%% Journal of Computational and Applied Mathematics%%%%%%%%%%%%
\ifjcam
  \begin{frontmatter}

  \author{\rebAuthor\fnref{fn1}}
  \address{\rebAddress\, {Email:}\rebEmail}
  \fntext[fn1]{\rebThanks} 

  \author{\yulAuthor\corref{cor1}\fnref{nf2}}
  \address{\yulAddress\, {Email:}\yulEmail} 
   \cortext[cor1]{Corresponding Author}
  \fntext[fn2]{\yulThanks} 

  \begin{abstract}\myAbstract\end{abstract}
  \begin{keyword}\myKeywords\end{keyword}

  \end{frontmatter}
% \begin{AMS}\myAMS\end{AMS}

\fi

%%%%%%%%%%%%%%%%%%%%%%%%%%%%%%%%%%%%%%%%%%%%%%%%%%%%%%%%%%%%%%%%%%%%%%%%%%%%%%

%%%%%%%%%%%%%%%% Journal of Computational Mathematics %%%%%%%%%%%%%%%%%%%%%%%%
\ifjcm
 \markboth{\rebShortAuthor and \yulShortAuthor}
          {\shortTitle}

\author{\rebAuthor\footnote{\rebThanks}\thanks{\rebAddress \\ Email: \rebEmail}
\and
        \yulAuthor\footnote{\yulThanks}\thanks{\yulAddress \\ Email: \yulEmail}}

\maketitle

  \begin{abstract}\myAbstract\end{abstract}
  \begin{classification}\myAMS\end{classification}
  \begin{keywords}\myKeywords\end{keywords}
\fi

%%%%%%%%%%%%%%%%%%%%%%%%%%%%%%%%%%%%%%%%%%%%%%%%%%%%%%%%%%%%%%%%%%%%%%%%%%%%%%s

%%%%%%%%%%%%%%%%%%%%%%%%%%%%%%%% input files %%%%%%%%%%%%%%%%%%%%%%%%%%%%%%%%%

\section{Introduction}

In this paper, we present some a posteriori error estimators, convergence and optimality results of adaptive finite element methods (FEM) for the Hodge Laplacian problem on Lipschitz domains in finite element exterior calculus (FEEC). Arnold, Falk, and Winther \cite{AFW2006} proposed the framework of FEEC for solving the Hodge Laplacian problem on the de Rham complex of differential forms, which is a wild generalization of the traditional mixed formulations for scalar and vector Laplacian problems in $\mathbb{R}^{2}$ and $\mathbb{R}^{3}$. In \cite{AFW2010}, they extended FEEC to closed Hilbert complexes. A priori error estimates were given under suitable regularity assumption on the exact solution and the domain. Such assumptions may fail at the presence of singularity, e.g., jump coefficients, nonsmooth sources, domains with nonsmooth boundary or reentrant corners. In these cases, standard FEMs on quasi-uniform meshes suffer from a slow rate of convergence. To overcome the singularity barrier, adaptive FEMs (AFEM) were proposed, see \cite{BS2001} for a thorough introduction. Convergence and optimality results of AFEMs for the primal formulation of scalar elliptic equations are extensive in the literature,   see \cite{Dorfler1996,MNS2000,BDD2004,Stevenson2007,CKNS2008,FFP2014} and references therein. Readers are also referred to  \cite{ZCSWX2012,CH2006,CHX2009,BM2008,HHX2011} for relevant results in AFEMs for Maxwell equations and adaptive mixed methods for scalar $2^{nd}$ and $4^{th}$ order elliptic equations. However, there are only a few papers on convergence and optimality of AFEMs in the FEEC framework, see, e.g., \cite{HLMS2019,CW2017} for the Hodge Laplace equation \eqref{compactHL} and \cite{Demlow2017} for computing harmonic forms. 

The first step of designing AFEMs is developing a posteriori error estimates. A posteriori error estimates in mixed finite element methods (MFEM) are technical and relying on delicate decomposition results, see, e.g., \cite{Alonso1996, CC1997, Schoberl2008, HX2011}. Until recently, Demlow and Hirani \cite{DH2014} constructed the first residual-type a posteriori error estimator controlling the coupled error $\|\sigma-\sigma_{h}\|_{H\Lambda^{k-1}}+\|u-u_{h}\|_{H\Lambda^{k}}+\|p-p_{h}\|$ of the mixed method \eqref{HL} for the Hodge Laplacian \eqref{compactHL}. {Using a correspondence between differential forms and scalar/vector fields in $\mathbb{R}^3$ (see page 14 in \cite{AFW2006}), the de Rham complex \eqref{dR} with $n=3$
can be identified with the well-known complex
\begin{equation}\label{dR3}
H^1(\Omega)\xrightarrow{\nabla} H(\text{curl},\Omega)\xrightarrow{\text{curl}}H(\text{div},\Omega)\xrightarrow{\text{div}}L^2(\Omega).
\end{equation}
In this case, the Hodge Laplacian \eqref{compactHL} in $\mathbb{R}^3$ reduces to the scalar Poisson's equation when $k=0, 3$ and vector Poisson's equation when $k=1,2$. The corresponding mixed method \eqref{HL} covers the classical Raviart--Thomas \cite{RT1977}, Brezzi--Douglas--Marini \cite{BDM1985}, and N\'ed\'elec edge element method \cite{Nedelec1980,Nedelec1986}, see page 60 in \cite{AFW2006}. In $\mathbb{R}^3,$ $\|\cdot\|_{H\Lambda^3}, \|\cdot\|$ are simply the $L^2$ norm and $\|\cdot\|_{H\Lambda^0}, \|\cdot\|_{H\Lambda^1}, \|\cdot\|_{H\Lambda^2}$ can be identified with the usual Sobolev norms $\|\cdot\|_{H^1}, \|\cdot\|_{H(\text{curl})}, \|\cdot\|_{H(\text{div})}$, respectively.} Important tools in \cite{DH2014} include a regular decomposition result and a commuting quasi-interpolation, see Theorem \ref{rdqi}. Relevant a posteriori error estimates and quasi-interpolations for the complex \eqref{dR3} have also been established by Sch\"oberl \cite{Schoberl2008}. In contrast to the estimator for the coupled error in \cite{DH2014}, we will derive several separate estimators for the decoupled errors $\|\sigma-\sigma_{h}\|_{H\Lambda^{k-1}}$, $\|p-p_{h}\|$, and $\|d(u-u_{h})\|$. All of these estimators are locally efficient while the error estimator in \cite{DH2014} has not been shown to be efficient because of the intractable term $\|P_{\mathfrak{H}}u_{h}\|$. Our separate estimator $\eta_{\sigma}(K)$ (see Theorem \ref{bdsigma}) is favorable when $\sigma$ is a practically relevant quantity. For example, the traditional MFEM for Poisson's equation is designed to obtain a better finite element solution $\sigma_{h}$ approximating $\sigma=\delta^nu=-\nabla u$. In addition, when using the pair $P_{r+1}\Lambda^{k-1}(\Th)\times P_{r}\Lambda^{k}(\Th)$ to discretize $H\Lambda^{k-1}(\Omega)\times H\Lambda^{k}(\Omega)$ (see \cite{AFW2006,AFW2010}), we have the a priori estimate 
$$\|\sigma-\sigma_{h}\|_{H\Lambda^{k-1}}+\|u-u_{h}\|_{H\Lambda^{k}}+\|p-p_{h}\|=O(h^{r}),$$
as well as the improved a priori estimates $$\|\sigma-\sigma_{h}\|_{H\Lambda^{k-1}}=O(h^{r+1}),\quad\|\sigma-\sigma_{h}\|=O(h^{r+2}),\quad\|d(u-u_{h})\|=O(h^{r}).$$
In this case, $\|d(u-u_{h})\|$ dominates the coupled error, which makes a coupled error estimator inefficient for the purpose of estimating $\sigma-\sigma_{h}$.

The authors in \cite{HLMS2019} developed a quasi-optimal adaptive MFEM (AMFEM) for the Hodge Laplacian with index $k=n$, which is in fact the traditional MFEM for Poisson's equation on domains with general topology in $\mathbb{R}^n$. \cite{CW2017} seems to be the only convergence result of AMFEM for the Hodge Laplacian with general index $1\leq k\leq n-1$. In particular, Chen and Wu \cite{CW2017} designed a convergent AMFEM on domains with vanishing $k$th-cohomology ($\mathfrak{H}^k=\{0\}$), e.g., a contractible domain. Their algorithm can reduce $\|d(\sigma-\sigma_{h})\|^2+\|d(u-u_{h})\|^2$ below any given error tolerance in finite steps. By assuming fineness of the initial mesh, they also claimed quasi-optimality of their algorithm. Since $d$ has a large kernel, \cite{CW2017} in fact provides semi-norm convergence, i.e.,  $\|d(\sigma-\sigma_{h})\|^2+\|d(u-u_{h})\|^2\rightarrow0$ does not imply $\sigma_h\rightarrow\sigma$ nor $u_h\rightarrow u$ in the $L^{2}\Lambda$, $H\Lambda$, or any other Sobolev norm. In view of literature on traditional mixed methods and MFEMs in FEEC, researchers are more concerned with errors measured in the $L^{2}$-norm $\|\cdot\|$ or the $V$-norm $\|\cdot\|_{H\Lambda}$. For example, the authors in \cite{AFW2010,AL2017} made a great effort to obtain improved decoupled a priori error estimates for $\|\sigma-\sigma_{h}\|$ and $\|u-u_{h}\|$. 

The contributions of this paper are listed as follows. All of our results are in full generality, i.e., they work for the Hodge Laplace equation with arbitrary index $1\leq k\leq n$, dimension $n\geq2$, and domain $\Omega$ with general topology $(\mathfrak{H}^{k}\neq\{0\})$.
\begin{enumerate}
\item We derive reliable and efficient a posteriori error estimators $\eta_{\sigma}, \eta_{p}$, and $\eta_{du}$ for controlling $\|\sigma-\sigma_{h}\|_{H\Lambda^{k-1}}$, $\|p-p_{h}\|$, and  $\|d(u-u_{h})\|$, respectively. Although $\eta_{du}$ appears in \cite{CW2017} in the case $\mathfrak{H}^{k}=\{0\}$, the estimators $\eta_{\sigma}$ and $\eta_{p}$ are new and building blocks of \textsf{AMFEM1} and \textsf{AMFEM2}.

\item We develop a convergent adaptive algorithm \textsf{AMFEM1} for controlling $\|\sigma-\sigma_{h}\|^2_{H\Lambda^{k-1}}$+$\|p-p_{h}\|^2$+$\|d(u-u_{h})\|^2$. Prior to this paper, we are not aware of any norm convergence result (w.r.t.~$\|\sigma-\sigma_{h}\|_{H\Lambda^{k-1}}$, $\|\sigma-\sigma_{h}\|$, $\|u-u_{h}\|_{H\Lambda^{k}}$,  $\|u-u_{h}\|$ or any combination of them) of AMFEM for the Hodge Laplacian with index $1\leq k\leq n-1$. In addition, \textsf{AMFEM1} with $k=n$ gives new result on AMFEM for Poisson's equation, see Remark \ref{kn2}. 

\item By dropping several marking steps in \textsf{AMFEM1}, we obtain a quasi-optimal adaptive method \textsf{AMFEM2} controlling $\|\sigma-\sigma_{h}\|_{H\Lambda^{k-1}}$. The new ingredient of the optimality proof is a localized upper bound. As far as we know, there is no result on quasi-optimality of AMFEMs for the Hodge Laplacian with index $1\leq k\leq n-1$ in the literature. 
\end{enumerate}

Our convergence analysis of \textsf{AMFEM1} basically follows the framework of \cite{CKNS2008}. The ingredients include global reliablity of $\eta_{\sigma}$, $\eta_{p}$, $\eta_{du}$, the error reduction in Lemma \ref{lemma1}, and the estimator reduction in Lemma \ref{lemma2}. Proofs of a posteriori upper bounds rely on the Hodge decomposition, the regular decomposition, and a gap estimate in Lemma \ref{gaph}. The role of Lemma \ref{lemma1} is similar to the orthogonality or quasi-orthogonality in convergence analysis of AFEMs, see, e.g., \cite{MN2005,CHX2009,BM2008,FFP2014}. The estimator reduction in Lemma \ref{lemma2} results from dependence of our estimators on $\sigma_{h}, p_{h}$ and $du_{h}$. In contrast, the coupled error estimator in \cite{DH2014} depends on the triple $(\sigma_{h},u_{h},p_{h})$, in particular on $\mu\|u_{h}\|$ (see \eqref{errorDH}), which seems an obstacle against proving estimator reduction. The contraction on $\|\sigma-\sigma\|_{H\Lambda^{k-1}}$ in \textsf{AMFEM1} is realized by discovering the suitably weighted quasi-error $\|\sigma-\sigma_h\|^2+\zeta\|d(\sigma-\sigma_h)\|^2+\rho_\sigma\eta_{\sigma}^{2}(\Th)$, see the proof of Theorem \ref{convergenceAMFEM1}. Convergence of $\|p-p_h\|$ and $\|d(u-u_h)\|$ is guaranteed by separate D\"orfler markings \eqref{mark2} and \eqref{mark3}. 

Separate markings may cause problems when proving optimality, see Section 6 in \cite{CKNS2008} for details. Hence we consider the standard adaptive method \textsf{AMFEM2} by dropping markings \eqref{mark2} and \eqref{mark3}. \textsf{AMFEM2} seems to be the first quasi-optimal AMFEM controlling $\|\sigma-\sigma_{h}\|_{H\Lambda^{k-1}}$ for the Hodge Laplace equation or even Poisson's equation when $k=n$. In the contraction proof of \textsf{AMFEM2}, there is no smallness assumption on the initial mesh size, in contrast to some AFEMs for similar problems posed on the de Rham complex, e.g., Maxwell equations in $\mathbb{R}^3$ \cite{ZCSWX2012}. In fact, contraction of \textsf{AMFEM2} is guaranteed by utilizing the aforementioned quasi-error. See also Remark \ref{kn2} for a comparison between \textsf{AMFEM2} and existing quasi-optimal AMFEMs for Poisson's equation. 

The new ingredient in the optimality proof is the localized upper bound in Theorem \ref{disupper}, which is a discrete and local version of the global reliability in Theorem \ref{bdsigma}. The proof is similar to Theorem \ref{bdsigma} in the sense that the discrete Hodge decomposition instead of the Hodge decomposition is applied. To localize the upper bound, we use the local bounded cochain projection developed by Falk and Winther \cite{FW2014} and the classical Scott--Zhang interpolation \cite{SZ1990}. Demlow (see Lemma 3 in \cite{Demlow2017}) originally used this technique to prove a localized upper bound in the AFEM for computing harmonic forms.

The rest of this paper is organized as follows. In Section \ref{pre}, we introduce the background of FEEC. In Section \ref{aposteriori}, we derive several a posteriori error estimates. Section \ref{convergence} is devoted to  convergence analysis of  \textsf{AMFEM1} and \textsf{AMFEM2}. In Section \ref{optimality}, we construct a localized upper bound and prove the quasi-optimality of \textsf{AMFEM2}. 
  
\section{Preliminaries}\label{pre}
In this section, we follow the convention of \cite{AFW2010,AFW2006} to introduce necessary background of FEEC in our analysis.
\subsection{Closed Hilbert complex}
Consider the closed Hilbert complex $(W,d):$ 
\begin{equation*}
\cdots\rightarrow W^{k-1}\xrightarrow{d^{k-1}}W^{k}\xrightarrow{d^{k}}W^{k+1}\xrightarrow{d^{k+1}}\cdots,
\end{equation*}
i.e., for each index $k$, $W^{k}$ is a Hilbert space equipped with the inner product $\ab{\cdot}{\cdot}=\ab{\cdot}{\cdot}_{W^k}$, $d^{k}: W^{k}\rightarrow W^{k+1}$ is a densely defined unbounded and closed operator, the range of $d^{k}$ is closed in $W^{k+1}$ and contained in the domain of $d^{k+1}$ and $d^{k+1}\circ d^{k}=0$. Let $\mathfrak{Z}^{k}=N(d^{k})$ denote the null space of $d^{k}$, $\mathfrak{B}^{k}=R(d^{k-1})$ the range of $d^{k-1}$, and $\mathfrak{H}^{k}=\mathfrak{Z}^{k}\cap\mathfrak{B}^{k\perp}$ the space of harmonic forms, where $\perp$ is the notation of orthogonal complement w.r.t.~$\ab{\cdot}{\cdot}$ in $W^{k}$. $(W,d)$ admits the Hodge decomposition
\begin{equation}\label{Hodge1}
W^{k}=\mathfrak{B}^{k}\oplus\mathfrak{H}^{k}\oplus\mathfrak{Z}^{k\perp},
\end{equation} 
where $\oplus$ denotes the direct sum w.r.t.~$\ab{\cdot}{\cdot}$. Correspondingly, let $P_{\mathfrak{B}}$, $ P_{\mathfrak{H}}$, and $P_{\mathfrak{Z}^{\perp}}$ be projections onto $\mathfrak{B}^{k}$,  $\mathfrak{H}^{k}$, and $\mathfrak{Z}^{k\perp}$ w.r.t.~$\ab{\cdot}{\cdot}$, respectively.
The complex $(W,d)$ is related to the dual complex $(W,d^{*})$: 
\begin{equation*}
\cdots\rightarrow W^{k+1}\xrightarrow{d_{k+1}^{*}}W^{k}\xrightarrow{d^{*}_{k}}W^{k-1}\xrightarrow{d^{*}_{k-1}}\cdots
\end{equation*}
where $d_{k}^{*}$ is the adjoint of $d^{k-1}$. $(W,d^{*})$ is also a closed Hilbert complex. Let $\mathfrak{Z}^{*}_{k}=N(d_{k}^{*})$ and $\mathfrak{B}_{k}^{*}=R(d_{k+1}^{*})$. By the closed range theorem, $\mathfrak{Z}^{k\perp}=\mathfrak{B}_{k}^{*}$ and the Hodge decomposition \eqref{Hodge1} can be written as 
\begin{equation}\label{Hodge2}
W^{k}=\mathfrak{B}^{k}\oplus\mathfrak{H}^{k}\oplus\mathfrak{B}_{k}^{*}.
\end{equation}
Associated with any Hilbert complex $(W,d)$ is the domain complex $(V,d)$:
\begin{equation*}
\cdots\rightarrow V^{k-1}\xrightarrow{d^{k-1}}V^{k}\xrightarrow{d^{k}}V^{k+1}\xrightarrow{d^{k+1}}\cdots,
\end{equation*}
where $V^{k}$ is the domain of $d^{k}$ in $W^{k}$, equipped with the inner product $\ab{u}{v}_{V}=\ab{u}{v}_{V^{k}}:=\ab{u}{v}_{W^{k}}+\ab{d^{k}u}{d^{k}v}_{W^{k+1}}.$ The domain complex of $(W,d^{*})$ is $(V^{*},d^{*})$, where $V^{*}_{k}$ is the domain of $d_{k}^{*}$. We use $\|\cdot\|$ to denote the norm of $W$ and $\|\cdot\|_{V}$ the norm of $V$. By inverse mapping theorem, the  Poincar\'e inequality $\|v\|\leq c_P\|d^{k}v\|$ holds, provided $v\in\mathfrak{Z}^{k\perp V}:=\mathfrak{Z}^{k\perp}\cap V^{k}$. For the dual complex $(V^{*},d^{*})$, a Poincar\'e inequality still holds with the same constant, i.e., $\|v\|\leq c_P\|d_{k+1}^{*}v\|$ provided $v\in\mathfrak{Z}_{k+1}^{*\perp V^{*}}:=\mathfrak{Z}_{k+1}^{*\perp}\cap V_{k+1}^{*}$. In fact, let $v=d^{k}w\in\mathfrak{Z}_{k+1}^{*\perp}=\mathfrak{B}^{k+1}$ with $w\in\mathfrak{Z}^{k\perp V}$, 
$$\|v\|^{2}=\ab{d_{k+1}^{*}v}{w}\leq\|d_{k+1}^{*}v\|\|w\|\leq c_P\|d_{k+1}^{*}v\| \|d^{k}w\|=c_P\|d_{k+1}^{*}v\|\|v\|.$$

Given $v\in W^{k}$, by the Hodge decomposition \eqref{Hodge2}, there exist $v_{1}\in V^{k-1},\ v_{2}\in V_{k+1}^{*},$ and $q\in\mathfrak{H}^{k}$, such that $v=d^{k-1}v_{1}+d_{k+1}^{*}v_{2}+q$. Clearly, we can assume $v_{1}\in(\mathfrak{Z}^{k-1})^{\perp V},\ v_{2}\in\mathfrak{Z}_{k+1}^{*\perp V^{*}}$, and $\|v_{1}\|\leq c_P\|d^{k-1}v_{1}\|$, $\|v_{2}\|\leq c_P\|d_{k+1}^{*}v_{2}\|$. This decomposition will be applied in proofs of many theorems in this paper.

Given a Hilbert complex $(W,d)$, the unbounded operator $L=dd^{*}+d^{*}d: W^{k}\rightarrow W^{k}$ is called the abstract Hodge Laplacian. The abstract Hodge Laplacian problem is to solve $Lu=f$ mod $\mathfrak{H}^{k}$. The variational formulation is to find $(u,p)\in(V^{k}\cap V_{k}^{*})\times\mathfrak{H}^{k}$, such that
\begin{equation}\label{primalHL}
\begin{aligned}
\ab{du}{dv}+\ab{d^{*}u}{d^{*}v}+\ab{v}{p}&=\ab{f}{v},\quad &&v\in V^{k}\cap V_{k}^{*},\\
\ab{u}{q}&=0,  &&q\in\mathfrak{H}^{k}.
\end{aligned}
\end{equation}
However, it is generally difficult to construct a finite element subspace of $V^{k}\cap V_{k}^{*}$ in practice. Alternatively, Arnold, Falk, and Winther \cite{AFW2010} considered the equivalent and well-posed mixed formulation of the abstract Hodge Laplacian: find $(\sigma,u,p)\in V^{k-1}\times V^{k}\times\mathfrak{H}^{k}$, such that
\begin{equation}\label{HL}
\begin{aligned}
\ab{\sigma}{\tau}-\ab{d\tau}{u}&=0,\quad&&\tau\in V^{k-1},\\
\ab{d\sigma}{v}+\ab{du}{dv}+\ab{v}{p}&=\ab{f}{v},&& v\in V^{k},\\
\ab{u}{q}&=0,&&q\in\mathfrak{H}^{k}.
\end{aligned}
\end{equation}

\subsection{Approximation of Hilbert complex}
For each index $k$, choose a finite dimensional subspace $V_{h}^{k}$ of $V^{k}$. We assume that $dV_{h}^{k}\subset V_{h}^{k+1}$ so that $(V_{h},d)$ is a subcomplex of $(V,d)$. We take $W_{h}^{k}$ to be the same space $V_{h}^{k}$ but equipped with the inner product $\ab{\cdot}{\cdot}$. For the restricted exterior derivative $d: V_{h}^{k}\rightarrow V_{h}^{k+1}$, the adjoint $d_{h}^{*}: V_{h}^{k+1}\rightarrow V_{h}^{k}$ is defined as 
$$\ab{d_{h}^{*}u}{v}=\ab{u}{dv},\quad u\in V_{h}^{k+1},\ v\in V_{h}^{k}.$$  Following \cite{AFW2010}, we use $\mathfrak{Z}_{h},\ \mathfrak{B}_{h},\ \mathfrak{H}_{h},\ \mathfrak{Z}_{h}^{*},\ \mathfrak{B}_{h}^{*}=\mathfrak{Z}_{h}^{\perp}$ etc.~for obvious meanings (cf.\cite{AFW2006} for details). Note that $\mathfrak{B}_{h}^{k}\subset\mathfrak{B}^{k}$, $\mathfrak{Z}_{h}^{k}\subset\mathfrak{Z}^{k}$, but in general $\mathfrak{H}_{h}^{k}\not\subseteq\mathfrak{H}^{k}$, $\mathfrak{Z}_{h}^{k\perp}\not\subseteq\mathfrak{Z}^{k\perp}$. The discrete Hodge decomposition holds:
\begin{equation}\label{disHodge}
V_{h}^{k}=\mathfrak{B}_{h}^{k}\oplus\mathfrak{H}^{k}_{h}\oplus\mathfrak{Z}_{h}^{k\perp}=\mathfrak{B}_{h}^{k}\oplus\mathfrak{H}^{k}_{h}\oplus\mathfrak{B}_{h}^{*}.
\end{equation}
Recall that $d_{h}^{*}$, $\oplus$, and $\perp$ are all based on the $W$-inner product $\ab{\cdot}{\cdot}$. In order to understand how well the approximation of $V$ by $V_{h}^{k}$ is, it is necessary to construct a bounded cochain projection $\pi_{h}$ from $(V,d)$ to $(V_{h},d)$. To be precise, for each index $k$, $\pi_{h}^{k}$ maps $V^{k}$ onto $V_{h}^{k}$, $\pi_{h}^{k}|_{V_{h}^{k}}=$id, $d^{k}\pi_{h}^{k}=\pi_{h}^{k+1}d^{k}$, and $\|\pi_{h}^{k}\|_{V}=\|\pi_{h}^{k}\|_{V^{k}\rightarrow V_{h}^{k}}<\infty$ uniformly with respect to the discretization parameter $h$. Theorem 3.6 in \cite{AFW2010} gives the discrete Poincar\'e inequality $\|v\|\leq c_P\|\pi_{h}^{k}\|_{V}\|dv\|$ provided $v\in\mathfrak{Z}_{h}^{\perp}$. Similar to the continuous case, another discrete Poincar\'e inequality $\|v\|\leq c_P\|\pi_{h}^{k}\|_{V}\|d_{h}^{*}v\|$ holds provided $v\in\mathfrak{Z}_{h}^{*\perp}=\mathfrak{B}_{h}$.

Using the discrete complex $(V_{h},d)$, the discrete mixed formulation for  \eqref{HL} is to find $(\sigma_{h},u_{h},p_{h})\in V_{h}^{k-1}\times V_{h}^{k}\times\mathfrak{H}_{h}^{k}$, such that
\begin{equation}\label{DHL}
\begin{aligned}
\ab{\sigma_{h}}{\tau}-\ab{d\tau}{u_{h}}&=0,\quad&&\tau\in V_{h}^{k-1},\\
\ab{d\sigma_{h}}{v}+\ab{du_{h}}{dv}+\ab{v}{p_{h}}&=\ab{f}{v},&&v\in V_{h}^{k},\\
\ab{u_{h}}{q}&=0,&&q\in\mathfrak{H}_{h}^{k}.
\end{aligned}
\end{equation}
 Since $\mathfrak{H}_{h}^{k}$ is not contained in $\mathfrak{H}^{k}$, \eqref{DHL}
is not a standard Galerkin method. The authors in \cite{AFW2010} proved the discrete inf-sup condition and a priori error estimates of \eqref{DHL}. Combining \eqref{HL} and \eqref{DHL}, we obtain the error equation
\begin{subequations}\label{error}
\begin{align}
\ab{\sigma-\sigma_{h}}{\tau}-\ab{d\tau}{u-u_{h}}&=0,\label{error1}\\
\ab{d(\sigma-\sigma_{h})}{v}+\ab{d(u-u_{h})}{dv}+\ab{v}{p-p_{h}}&=0,\label{error2}
\end{align}
\end{subequations}
for all $(\tau,v)\in V_{h}^{k-1}\times V_{h}^{k}$.

\subsection{De Rham complex and approximation}
The de Rham complex is a canonical example of the closed Hilbert complex $(W,d)$. Let $\Omega\subset\mathbb{R}^{n}$ be a bounded Lipschitz domain. Given $0\leq k\leq n$, let $\Lambda^{k}(\Omega)$ denote the space of smooth $k$-forms $\omega$ which can be written as 
$$\omega=\sum_{1\leq\alpha_{1}<\cdots<\alpha_{k}\leq n}\omega_{\alpha}dx^{\alpha_{1}}\wedge\cdots\wedge dx^{\alpha_{k}},$$
where the coefficient $\omega_{\alpha}\in C^{\infty}(\Omega)$ and $\wedge$ is the wedge product. The space $\Lambda^{k}(\Omega)$ is naturally endowed with the $L^{2}$-inner product $\ab{\cdot}{\cdot}$ and norm $\|\cdot\|$.
Let $d^k: \Lambda^{k}(\Omega)\rightarrow\Lambda^{k+1}(\Omega)$ denote the exterior derivative for differential forms. The Sobolev version of $\Lambda^{k}(\Omega)$ is $L^{2}\Lambda^{k}(\Omega)$, the completion of $\Lambda^{k}(\Omega)$ under $\|\cdot\|$, which is simply the space of $k$-forms with $L^2$ coefficients. Corresponding to $(W,d)$ is the $L^{2}$ complex 
\begin{equation*}
L^{2}\Lambda^{0}(\Omega)\xrightarrow{d^0} L^{2}\Lambda^{1}(\Omega)\xrightarrow{d^1}\cdots\xrightarrow{d^{n-2}} L^{2}\Lambda^{n-1}(\Omega)\xrightarrow{d^{n-1}} L^{2}\Lambda^{n}(\Omega),
\end{equation*}
where $d^k$ is the weak exterior derivative viewed as a densely defined unbounded operator with domain $H\Lambda^{k}(\Omega):=\{\omega\in L^{2}\Lambda^{k}(\Omega): d\omega\in L^{2}\Lambda^{k+1}(\Omega)\}.$ The $L^2$ de Rham complex [corresponds to ($V,d$)] is 
\begin{equation}\label{dR}
H\Lambda^0(\Omega)\xrightarrow{d^0} H\Lambda^1(\Omega)\xrightarrow{d^1}\cdots\xrightarrow{d^{n-2}} H\Lambda^{n-1}(\Omega)\xrightarrow{d^{n-1}} H\Lambda^{n}(\Omega).
\end{equation}
To define the adjoint of $d$, we need the Hodge star operator $\star: \Lambda^{k}(\Omega)\rightarrow\Lambda^{n-k}(\Omega)$ determined by 
$$\int_{\Omega}\omega\wedge\mu=\ab{\star\omega}{\mu},\quad\mu\in\Lambda^{n-k}(\Omega).$$ The coderivative $\delta^k: \Lambda^{k}(\Omega)\rightarrow\Lambda^{k-1}(\Omega)$ is then defined as $\star\delta^k\omega=(-1)^{k}d^{n-k}\star\omega$. We may drop the index of $\delta$ and $d$ provided no confusion arises. $d$ and $\delta$ are related by the integrating by parts formula
\begin{equation}\label{IP}
\ab{d\omega}{\mu}=\ab{\omega}{\delta\mu}+\int_{\partial\Omega}\tr\omega\wedge\tr\star\mu,\quad\omega\in\Lambda^{k}(\Omega),\quad\mu\in\Lambda^{k+1}(\Omega),
\end{equation}
where the trace operator $\tr=i^{*}$ on $\partial\Omega$ is the pullback induced by the inclusion $i:\partial\Omega\rightarrow\overline{\Omega}$.
\eqref{IP} holds on any Lipschitz subdomain $\Omega_{0}\subset\Omega$ and in this case $\tr$ denotes the trace on $\partial\Omega_{0}$ by abuse of notation. The adjoint of $d^{k-1}$ is the unbounded operator $\delta^{k}: L^2\Lambda^{k}(\Omega)\rightarrow L^2\Lambda^{k-1}(\Omega)$ with domain 
$\mathring{H}^{*}\Lambda^{k}(\Omega):=\{\omega\in L^{2}\Lambda^{k}(\Omega): \delta\omega\in L^{2}\Lambda^{k-1}(\Omega),\ \tr\star
\omega=0\text{ on }\partial\Omega\}$. Then
\begin{equation*}
L^2\Lambda^{n}(\Omega)\xrightarrow{\delta^n} L^2\Lambda^{n-1}(\Omega)\xrightarrow{\delta^{n-1}}\cdots\xrightarrow{\delta^1} L^2\Lambda^{1}(\Omega)\xrightarrow{\delta^0}L^2\Lambda^{0}(\Omega)
\end{equation*}
is an example of $(W,d^*)$ and
\begin{equation*}
\mathring{H}^{*}\Lambda^{n}(\Omega)\xrightarrow{\delta^n} \mathring{H}^{*}\Lambda^{n-1}(\Omega)\xrightarrow{\delta^{n-1}}\cdots\xrightarrow{\delta^1} \mathring{H}^{*}\Lambda^{1}(\Omega)\xrightarrow{\delta^0}\mathring{H}^{*}\Lambda^{0}(\Omega)
\end{equation*}
is the domain complex [corresponds to $(V^{*},d^{*})$]. 
The Hodge Laplacian problem $(d\delta+\delta d)u=f$ mod $\mathfrak{H}^{k}$ is a concrete example of the abstract Hodge Laplacian. The corresponding mixed formulation is \eqref{HL} with $\delta$ replacing the abstract adjoint $d^{*}$ and $H\Lambda^{k-1}(\Omega)\times H\Lambda^k(\Omega)$ replacing $V^{k-1}\times V^k$. A more compact form is 
\begin{equation}\label{compactHL}
\begin{aligned}
\sigma=\delta u,\quad d\sigma+\delta d u+p=f,\quad u\perp\mathfrak{H}^{k}.
\end{aligned}
\end{equation}
Since $u\in\text{Dom}(\delta)=\mathring{H}^*\Lambda^k(\Omega)$ and $du\in \text{Dom}(\delta)=\mathring{H}^*\Lambda^{k+1}(\Omega)$, the boundary conditions $\tr\star u=0$, $\tr\star du=0$ on $\partial\Omega$ are implicitly posed. 

Let $\Th$ be a simplicial triangulation of $\Omega$. Let $h_K:=|K|^{\frac{1}{n}}$ denote the size of a simplex $K\in\Th,$
where $|K|$ is the volume of $K$. We still use $V_h^k$ to denote a suitable finite element space of $k$-forms with piecewise polynomial coefficients on $\Th$. Arnold, Falk, and Winther \cite{AFW2006} gave four possible choices of $V_{h}^{k-1}\times V_{h}^{k}$ for discretizing the pair $H\Lambda^{k-1}(\Omega)\times H\Lambda^{k}(\Omega)$. For example, $V_h^{k-1}\times V_h^k=\mathcal{P}_{r+1}\Lambda^{k-1}(\Th)\times\mathcal{P}_r\Lambda^k(\Th)$ with $r\geq0$, where $\mathcal{P}_i\Lambda^j(\Th)$ is the space of piecewise polynomial $j$-forms of degree $\leq i.$ We assume that $\Th$ is shape regular, i.e., $\max_{K\in\Th}\frac{r_{K}}{\rho_{K}}\leq C_{\Th}<\infty$, where $r_{K}$ and $\rho_{K}$ are radii of circumscribed and inscribed spheres of the simplex $K$, respectively. The constant $C_{\Th}$ quantifies the shape regularity of $\Th$ and should be uniformly bounded w.r.t.~the parameter $h$. For the construction of uniformly bounded cochain projection under shape regular but non quasi-uniform meshes, readers are referred to \cite{CW2008}.

Let $H^{s}\Lambda^{k}(\Omega)$ be the space of $k$-forms whose coefficients are in $H^{s}(\Omega)$ and be endowed with the $H^{s}$ Sobolev inner product and norm. Formula \eqref{IP} still holds for $\omega\in H^{1}\Lambda^{k}$ and $\mu\in H^{1}\Lambda^{k+1}.$ Consider $H^{1}\Lambda^{k}(\Th):=\{\omega\in L^{2}\Lambda^{k}(\Omega): \omega|_{K}\in H^{1}\Lambda^{k}(K)\ \text{for\ all\ } K\in\Th\}$, the space of piecewise $H^{1}$ $k$-forms. Let $\Eh$ be the set of $(n-1)$-faces in $\Th$. For each interior face $e\in\Eh$ and $\omega\in H^{1}\Lambda^k(\Th)$, let $\lr{\tr\omega}:=\tr(\omega|_{K_{1}})-\tr(\omega|_{K_{2}})$ denote the trace jump of $\omega$ on $e$, where the two adjacent simplices $K_{1}$ and $K_{2}$ share $e$ as an $(n-1)$-face. For each boundary face $e$, $\lr{\tr\omega}:=\tr(\omega|_{K})$ on $e$, where $K$ is the simplex having $e$ as a face. We use the notation $A\lesssim B$ provided $A\leq C\cdot B$ and $C$ is a generic constant depending only on $\Omega$ and shape regularity of the underlying mesh. We use $\|\cdot\|_{H^1}$ and $\|\cdot\|_{H\Lambda^k}$ to denote the $H^1\Lambda^l(\Omega)$ norm with some $l$ and $H\Lambda^k(\Omega)$ norm, respectively. $\ab{\cdot}{\cdot}_K$ denotes the $L^2$ inner product restricted to $K$. $\|\cdot\|_{K}$ and $\|\cdot\|_{\partial K}$ denote the $L^{2}$ norm restricted to $K$ and $\partial K$, respectively. 

To derive our error estimators and prove global reliability as well as localized upper bound, we need the regular decomposition and commuting quasi-interpolation developed by Demlow and Hirani, see Lemma 5 and Lemma 6 in \cite{DH2014}. 
\begin{theorem}[regular decomposition and quasi-interpolation]\label{rdqi}
Assume that $\Omega$ is a bounded Lipschitz domain in $\mathbb{R}^{n}$. Given $v\in H\Lambda^{k}(\Omega)$ with $1\leq k\leq n-1$, there exist $\varphi\in H^{1}\Lambda^{k-1}(\Omega)$ and $z\in H^{1}\Lambda^{k}(\Omega)$, such that $v=d\varphi+z$ and
$$\|\varphi\|_{H^{1}}+\|z\|_{H^{1}}\lesssim\|v\|_{H\Lambda^{k}}.$$
{When $k=0,$ $H\Lambda^0(\Omega)$ is identified with $H^1(\Omega)$ and the above decomposition trivially holds with $\varphi=0, z=v.$}
Let $\Pi_{h}^{k}: L^{2}\Lambda^{k}(\Omega)\rightarrow V_{h}^{k}$ be the commuting quasi-interpolation in \cite{DH2014}, then $\Pi_{h}^{k+1}\circ d^{k}=d^{k}\circ\Pi_{h}^{k}$ and the following approximation property holds. If $k=0$, we have 
\begin{equation*}
\sum_{K\in\Th}\big(h_{K}^{-2}\|v-\Pi_{h}^{k}v\|_{K}^{2}+h_{K}^{-1}\|v-\Pi_{h}^{k}v\|_{\partial K}^{2}+|v-\Pi_{h}^{k}v|_{H^{1}(K)}^{2}\big)\lesssim\|v\|^{2}_{H\Lambda^{k}}.
\end{equation*}
    If $1\leq k\leq n-1$, we have
    \begin{equation*}
       \begin{aligned} \sum_{K\in\Th}&\big(h_{K}^{-2}\|\varphi-\Pi^{k}_{h}\varphi\|_{K}^{2}+h_{K}^{-2}\|z-\Pi^{k}_{h}z\|_{K}^{2}\\
       &+h_{K}^{-1}\|\tr(\varphi-\Pi^{k}_{h}\varphi)\|_{\partial K}^{2}+h_{K}^{-1}\|\tr(z-\Pi^{k}_{h}z)\|_{\partial K}^{2}\big)\lesssim \|v\|^{2}_{H\Lambda^{k}}.
       \end{aligned}
    \end{equation*}
\end{theorem}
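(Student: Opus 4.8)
The plan is to split the statement into two essentially independent pieces: the regular decomposition of $H\Lambda^k$-forms, and the local approximation/trace behaviour of a commuting quasi-interpolation. I would begin by clearing the trivial degree $k=0$, where $H\Lambda^0(\Omega)=H^1(\Omega)$ with equivalent norms, so that the decomposition is vacuous ($\varphi=0$, $z=v$) and only the approximation estimate remains.

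For the regular decomposition with $1\le k\le n-1$ I would invoke a smoothing homotopy for the de Rham complex. The cleanest route is the regularized Poincar\'e operator of Costabel--McIntosh, which on any bounded Lipschitz domain $\Omega$ furnishes a linear operator $R$ with $R\colon L^2\Lambda^j(\Omega)\to H^1\Lambda^{j-1}(\Omega)$ bounded for $1\le j\le n$ and $dR\mu+Rd\mu=\mu$ for every $\mu\in L^2\Lambda^j(\Omega)$ with $d\mu\in L^2$. A self-contained alternative is to extend $v$ to $\widetilde v\in L^2\Lambda^k(B)$ on a ball $B\supset\overline\Omega$ with $d\widetilde v\in L^2\Lambda^{k+1}(B)$ and $\|\widetilde v\|_{L^2(B)}+\|d\widetilde v\|_{L^2(B)}\lesssim\|v\|_{H\Lambda^k}$ via a bounded $H\Lambda^k$-extension for Lipschitz domains, then apply on the star-shaped $B$ the averaged (Cesari/Bogovskii-type) Poincar\'e homotopy, which is smoothing of order $-1$, and restrict to $\Omega$. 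In either case set
\[
\varphi:=Rv\in H^1\Lambda^{k-1}(\Omega),\qquad z:=R(dv)\in H^1\Lambda^{k}(\Omega)
\]
(with $Rv$, $R(dv)$ read as $(R\widetilde v)|_\Omega$, $(Rd\widetilde v)|_\Omega$ in the extension route); the homotopy identity applied to $\mu=v$ gives $v=d\varphi+z$, while the order $-1$ smoothing gives $\|\varphi\|_{H^1}\lesssim\|v\|\le\|v\|_{H\Lambda^k}$ and $\|z\|_{H^1}\lesssim\|dv\|\le\|v\|_{H\Lambda^k}$, which is exactly the claimed bound.

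For the quasi-interpolation estimates I would use only the standard localized bounds of a Cl\'ement/Scott--Zhang-type commuting quasi-interpolant $\Pi_h$, whose existence and commuting property $\Pi_h^{k+1}d^k=d^k\Pi_h^k$ we take from \cite{DH2014}: for $w\in H^1\Lambda^j(\Omega)$ and $K\in\Th$,
\[
\|w-\Pi_h^j w\|_K\lesssim h_K|w|_{H^1(\omega_K)},\qquad \|\tr(w-\Pi_h^j w)\|_{\partial K}^2\lesssim h_K|w|_{H^1(\omega_K)}^2,
\]
the second following from a scaled trace inequality on $K$ together with the $H^1$-stability of $\Pi_h$, and, for $j=0$, $|w-\Pi_h^0 w|_{H^1(K)}\lesssim|w|_{H^1(\omega_K)}$, where $\omega_K$ is the patch of elements meeting $K$ and all constants depend only on the shape-regularity bound $C_{\Th}$. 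For $k=0$ apply these with $w=v$, $j=0$, sum over $K\in\Th$, and use finite overlap of the patches to obtain $\sum_K\bigl(h_K^{-2}\|v-\Pi_h^0v\|_K^2+h_K^{-1}\|v-\Pi_h^0v\|_{\partial K}^2+|v-\Pi_h^0v|_{H^1(K)}^2\bigr)\lesssim|v|_{H^1(\Omega)}^2\le\|v\|_{H\Lambda^0}^2$. For $1\le k\le n-1$ apply them with $w=\varphi\in H^1\Lambda^{k-1}$ and $w=z\in H^1\Lambda^{k}$ (reading $\Pi_h^k\varphi$ as $\Pi_h^{k-1}\varphi$), sum over $K$, use finite overlap, and conclude with $|\varphi|_{H^1}^2+|z|_{H^1}^2\lesssim\|v\|_{H\Lambda^k}^2$ from the regular decomposition; here the commuting property is what makes $\varphi-\Pi_h^{k-1}\varphi$ and $z-\Pi_h^kz$ the natural quantities, since $v-\Pi_h^kv=d(\varphi-\Pi_h^{k-1}\varphi)+(z-\Pi_h^kz)$.

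The main obstacle is the regular decomposition itself: producing an $H^1$ pair $(\varphi,z)$ with $v=d\varphi+z$ on a general bounded Lipschitz domain with nontrivial cohomology requires the smoothing (averaged) homotopy operator rather than the naive Poincar\'e homotopy, and the self-contained route additionally needs a bounded $H\Lambda^k$-extension to a ball, which is a nontrivial (though classical) construction for Lipschitz domains. Everything downstream — the coefficient-wise $H^1$ approximation theory for $\Pi_h$ and the patch-overlap summation — is routine and uses only shape regularity.
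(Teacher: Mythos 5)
The paper never proves this theorem: it is imported verbatim as Lemmas 5 and 6 of \cite{DH2014}, so there is no internal argument to compare against, and your sketch is essentially a reconstruction of the proof given in that reference (smoothing homotopy for the regular decomposition, local approximation and scaled trace estimates for the commuting quasi-interpolant, patch-overlap summation). Two corrections are needed, though. First, your primary route for the decomposition is false as stated: on a bounded Lipschitz domain with nontrivial $k$-th cohomology --- precisely the case this paper insists on, $\mathfrak{H}^{k}\neq\{0\}$ --- there cannot exist an operator $R$ with $dR\mu+Rd\mu=\mu$ for every $\mu\in L^{2}\Lambda^{k}(\Omega)$ with $d\mu\in L^{2}$, since applying the identity to a nonzero harmonic form $\mu$ would give $\mu=dR\mu$ and make it exact. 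The Costabel--McIntosh regularized Poincar\'e operator has the exact homotopy property only on domains star-shaped with respect to a ball; on general Lipschitz domains their construction carries a finite-rank cohomological correction. Consequently your ``self-contained alternative'' --- extend $v$ to $\widetilde v$ on a ball $B\supset\overline{\Omega}$ by a bounded $H\Lambda^{k}$-extension, apply the star-shaped homotopy on $B$, and restrict $\varphi=(R\widetilde v)|_{\Omega}$, $z=(Rd\widetilde v)|_{\Omega}$ to $\Omega$ --- is not an alternative but the only correct line of argument, and must be promoted to the main proof.

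Second, the local first-order bounds $\|w-\Pi_{h}w\|_{K}\lesssim h_{K}|w|_{H^{1}(\omega_{K})}$ and the associated trace estimate are not generic properties of ``Cl\'ement/Scott--Zhang-type'' operators once you also demand commutativity with $d$: reconciling locality, $H^{1}$-approximation, and the commuting property is the nontrivial content of the construction in \cite{DH2014} (and of the earlier smoothed projections, which are commuting but not local in this sense). Since the theorem defines $\Pi_{h}$ by reference to that construction, it is legitimate to cite these bounds, but they should be flagged as the substantive imported ingredient rather than presented as routine; as written, your argument for the approximation half quietly re-cites the statement being proved. With those two adjustments the proposal is sound and coincides with the standard proof.
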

Similar decomposition holds for the space $\mathring{H}\Lambda^{k}(\Omega)=\{v\in H\Lambda^k(\Omega): \tr v=0\text{ on }\partial\Omega\}$. 

The discrete Hodge Laplacian \eqref{DHL} with $k=0$ reduces to the standard FEM for solving Poisson's equation under the homogeneous Neumann boundary condition. Hence we focus on the index $1\leq k\leq n$. To avoid cumbersome notations, we may drop the index $k$ appearing in $ \mathfrak{Z}^{k}, \mathfrak{B}^{k}, \mathfrak{H}^{k}, \Pi_h^k$ etc.~provided no confusion arises. In the end of this section, recall the four Poincar\'e inequalities 
\begin{equation*}
\begin{aligned} 
&\|v_{1}\|\leq c_P\|dv_{1}\|,\quad v_{1}\in\mathfrak{Z}^{\perp V},\quad \|v_{2}\|\leq c_P\|\delta v_{2}\|,\quad v_{2}\in\mathfrak{Z}^{*\perp V^{*}},\\
&\|v_{h}^{1}\|\leq c_P\|\pi_{h}\|_{V}\|dv_{h}^{1}\|,\quad v_{h}^{1}\in\mathfrak{Z}_{h}^{\perp},\quad\|v_{h}^{2}\|\leq c_P\|\pi_{h}\|_{V}\|\delta_{h} v_{h}^{2}\|,\quad v_{h}^{2}\in\mathfrak{Z}_{h}^{*\perp},
\end{aligned}
\end{equation*}
which are universal in the rest of this paper.

\section{A posteriori error estimates}\label{aposteriori}
In this section, we derive a posteriori estimates for $\|\sigma-\sigma_{h}\|_{H\Lambda^{k-1}},\  \|p-p_{h}\|$, and $ \|d(u-u_{h})\|$ in order and separately. An error estimator for $\|d(\sigma-\sigma_{h})\|$ was developed in \cite{CW2017}.
\begin{theorem}[Chen and Wu's a posteriori estimate for $\|d(\sigma-\sigma_{h})\|$]\label{bddsigma}
For $1\leq k\leq n-1$ and $f\in H^{1}\Lambda^{k}(\Th)$, it holds that
\begin{equation*}
\|d(\sigma-\sigma_{h})\|^2\lesssim\eta^2_{d\sigma}(\Th)=\sum_{K\in\Th}\eta^{2}_{d\sigma}(K),
\end{equation*}
where
\begin{equation*}
\eta^{2}_{d\sigma}(K)=h_{K}^{2}\|\delta(f-d\sigma_{h})\|_{K}^{2}+h_{K}\|\lr{\tr\star(f-d\sigma_{h})}\|_{\partial K}^{2}.
\end{equation*}
\end{theorem}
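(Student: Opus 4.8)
The plan is to derive a residual-type upper bound for $\|d(\sigma-\sigma_h)\|$ by exploiting the structure of the error equation \eqref{error2} together with the duality between $d$ and $\delta$. First I would observe that $d(\sigma-\sigma_h)\in\mathfrak{B}^k=R(d^{k-1})$, so the Poincar\'e-type argument gives $\|d(\sigma-\sigma_h)\|\leq c_P\|\delta(d(\sigma-\sigma_h))\|$ once we know $d(\sigma-\sigma_h)\in\mathfrak{Z}_k^{*\perp V^*}$; more directly, I would test the quantity $\|d(\sigma-\sigma_h)\|^2$ using the characterization $\|w\| = \sup_{v}\ab{w}{dv}/\|dv\|$ type bound, or equivalently pick $v$ with $dv$ in the right space. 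Since $d\sigma = f - \delta du - p$ from \eqref{compactHL} (using $u\perp\mathfrak{H}^k$), and since $d\sigma_h$ is the discrete analogue, the natural move is to write $d(\sigma-\sigma_h) = (f - d\sigma_h) - \delta du - p$ projected appropriately, and then recognize $f - d\sigma_h$ as the computable residual whose failure to be in the range of $\delta$ (weakly) is measured by the element and jump terms $h_K\|\delta(f-d\sigma_h)\|_K$ and $h_K^{1/2}\|\lr{\tr\star(f-d\sigma_h)}\|_{\partial K}$.

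The key steps, in order: (1) Use the Hodge decomposition \eqref{Hodge2} to write $d(\sigma-\sigma_h) = d\psi$ for some $\psi\in\mathfrak{Z}^{k-1\perp V}$ (it lies in $\mathfrak{B}^k$ automatically since $\sigma,\sigma_h\in V^{k-1}$), and bound $\|d\psi\|^2 = \ab{d(\sigma-\sigma_h)}{d\psi}$. (2) Insert the commuting quasi-interpolant $\Pi_h^{k-1}\psi\in V_h^{k-1}$: by the error equation \eqref{error1} applied with $\tau=\Pi_h^{k-1}\psi$, together with $d\Pi_h^{k-1}\psi = \Pi_h^k d\psi$, rewrite $\ab{d(\sigma-\sigma_h)}{d\psi}$ in terms of $\ab{d(\sigma-\sigma_h)}{d(\psi-\Pi_h^{k-1}\psi)}$ plus a term involving $u-u_h$. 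Wait --- more carefully, one should test \eqref{error2} rather; I would use \eqref{error2} with a $v$ whose $dv$ approximates $d(\sigma-\sigma_h)$, giving $\ab{d(\sigma-\sigma_h)}{dv} = -\ab{d(u-u_h)}{d(dv)} - \ab{dv}{p-p_h} = -\ab{dv}{p-p_h}$ since $d\circ d = 0$, hence $d(\sigma-\sigma_h)\perp dV_h^k$ modulo $\mathfrak{H}_h^k$-type terms. (3) Then for general $\psi$, split $d\psi = d\Pi_h^{k-1}\psi + d(\psi - \Pi_h^{k-1}\psi)$... actually the cleanest route: $\ab{d(\sigma-\sigma_h)}{d\psi}$, use $d\psi\in\mathfrak{B}^k$, integrate by parts elementwise against $f - d\sigma_h$ using \eqref{IP} on each $K$, collecting the volume term $\ab{\delta(f-d\sigma_h)}{\psi}_K$ and the boundary term $\int_{\partial K}\tr\psi\wedge\tr\star(f-d\sigma_h)$, then subtract $\ab{d\sigma-p-\delta du}{\psi}$ which vanishes or telescopes by the exact equation, and finally apply the approximation estimates of Theorem \ref{rdqi} (with $\psi$ decomposed as $d\varphi + z$) and $\|\psi\|_{H\Lambda^{k-1}}\lesssim\|d\psi\| = \|d(\sigma-\sigma_h)\|$ via Poincar\'e. (4) Cauchy--Schwarz elementwise, scaled trace inequalities absorbing the $h_K$ powers, and divide through by $\|d(\sigma-\sigma_h)\|$.

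The main obstacle I anticipate is handling the harmonic-form and interpolation error contributions so that \emph{only} computable quantities depending on $\sigma_h$ (and $f$) survive --- in particular ensuring that the term involving $u-u_h$ (or $p-p_h$) that naturally appears when testing the error equation can be eliminated, rather than becoming an uncontrolled extra term. This is exactly where the structure $d\circ d = 0$ must be used decisively: testing \eqref{error2} against $dv$ kills the $d(u-u_h)$ contribution, and the $\ab{dv}{p-p_h}$ term must be shown to vanish by choosing $v$ in (the $d$-preimage of) $\mathfrak{B}^k$, on which $p-p_h$ is orthogonal up to discrete-harmonic defects. The second delicate point is the boundary jump term: one must pass from the single-simplex traces in \eqref{IP} to the jump $\lr{\tr\star(f-d\sigma_h)}$ by observing that $\psi$ (being in $H^1$ after the regular decomposition) has single-valued traces across interior faces, so the contributions from the two sides of each interior face combine into the jump, while on $\partial\Omega$ the boundary condition $\tr\star(\cdot)$ and the homogeneous trace of the relevant test piece make it consistent. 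Everything else --- the scaled trace/inverse inequalities and the summation --- is routine.
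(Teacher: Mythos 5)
Your plan is correct and is essentially the argument the paper relies on: Theorem \ref{bddsigma} itself is quoted from Chen and Wu \cite{CW2017} without proof here, but the identical technique --- Galerkin orthogonality of $d(\sigma-\sigma_h)$ against $dV_h^{k-1}$ obtained by testing \eqref{error2} with $d\tau$, the regular decomposition plus the commuting quasi-interpolant of Theorem \ref{rdqi}, element-wise integration by parts collecting volume residuals and trace jumps, and the Poincar\'e inequality --- is exactly what the paper uses for the $\|\delta v_2\|$ component in the proof of Theorem \ref{bdsigma} and for $\|d(\sigma_h-\sigma_H)\|$ in Theorem \ref{disupper}. The only hedges you should drop are ``modulo $\mathfrak{H}_h^k$-type terms'' and ``up to discrete-harmonic defects'': since $p\in\mathfrak{H}^k\perp\mathfrak{B}^k\supset\mathfrak{B}_h^k$ and $p_h\in\mathfrak{H}_h^k\perp\mathfrak{B}_h^k$, the pairing $\ab{d\tau}{p-p_h}$ vanishes exactly for $\tau\in V_h^{k-1}$, so no extra terms survive.
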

Theorem \ref{bddsigma} holds on any Lipschitz domain $\Omega$ with nontrivial cohomology although \cite{CW2017} is concerned with domains without harmonic forms.

To derive upper bounds for other errors in the mixed method \eqref{DHL}, the Hodge decomposition \eqref{Hodge2} is useful. In order to control the harmonic component in the decomposition, we need to estimate the gap between $\mathfrak{H}$ and $\mathfrak{H}_{h}$:
\begin{equation*}
\begin{aligned}
\delta(\mathfrak{H},\mathfrak{H}_{h})&:=\sup_{\|q\|=1,q\in\mathfrak{H}}\|q-P_{\mathfrak{H}_{h}}q\|,\\
\delta(\mathfrak{H}_{h},\mathfrak{H})&:=\sup_{\|q\|=1,q\in\mathfrak{H}_{h}}\|q-P_{\mathfrak{H}}q\|,\\
\gap(\mathfrak{H},\mathfrak{H}_{h})&:=\max\{\delta(\mathfrak{H},\mathfrak{H}_{h}),\delta(\mathfrak{H}_{h},\mathfrak{H})\}.
\end{aligned}
\end{equation*}
The next lemma is essentially a combination of results in \cite{HLMS2019,DH2014,AFW2010}. To be complete and precise, we give an explicit upper bound for the gap.
\begin{lemma}\label{gaph}
Let $\pi_{h}$ be a $V$-bounded cochain projection from $(V,d)$ to $(V_{h},d)$. It holds that
\begin{equation}\label{HHh}
\gap(\mathfrak{H},\mathfrak{H}_{h})=\delta(\mathfrak{H},\mathfrak{H}_{h})=\delta(\mathfrak{H}_{h},\mathfrak{H})\leq\left(1-\frac{1}{\big(\|\pi_{h}\|_{V}+1\big)^{2}}\right)^{\frac{1}{2}}.
\end{equation}
Let $(V_{H},d)$ be a subcomplex of $(V_{h},d)$ and $\pi_{H}$ be a $V$-bounded cochain projection from $(V,d)$ to $(V_{H},d)$. Then
\begin{equation}\label{HhHH}
\gap(\mathfrak{H}_{h},\mathfrak{H}_{H})=\delta(\mathfrak{H}_{h},\mathfrak{H}_{H})=\delta(\mathfrak{H}_{H},\mathfrak{H}_{h})\leq\left(1-\frac{1}{\big(\|\pi_{H}\|_{V}+1\big)^{2}}\right)^{\frac{1}{2}}.
\end{equation}
\end{lemma}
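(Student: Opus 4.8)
The plan is to prove \eqref{HHh} and \eqref{HhHH} by a single argument: first for the pair $(V,d)\supset(V_h,d)$ with the projection $\pi_h$, and then verbatim for $(V_h,d)\supset(V_H,d)$ with $\pi_H$, using that $\pi_H$ restricts to a $V$-bounded cochain projection of $(V_h,d)$ onto $(V_H,d)$, that $\mathfrak{B}_H^k\subset\mathfrak{B}_h^k$ and $\mathfrak{Z}_H^k\subset\mathfrak{Z}_h^k$, and that the discrete Hodge decomposition \eqref{disHodge} now plays the role of \eqref{Hodge2}. I therefore describe only the first case.

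First I would reduce the estimate to a single direction. Since a $V$-bounded cochain projection exists, $\dim\mathfrak{H}^k=\dim\mathfrak{H}_h^k<\infty$ by \cite{AFW2010}. For two subspaces of a Hilbert space of the same finite dimension the two one-sided gaps coincide, because the restrictions $P_{\mathfrak{H}_h}|_{\mathfrak{H}}$ and $P_{\mathfrak{H}}|_{\mathfrak{H}_h}$ are mutually adjoint maps between spaces of equal dimension, hence have a common smallest singular value $s$, and $\delta(\mathfrak{H},\mathfrak{H}_h)^2=1-s^2=\delta(\mathfrak{H}_h,\mathfrak{H})^2$ by the Pythagorean identity for orthogonal projections. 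This gives the two equalities in \eqref{HHh}, and it remains to prove the lower bound $\|P_{\mathfrak{H}}q\|\ge\|\pi_h\|_V^{-1}$ for every $q\in\mathfrak{H}_h$ with $\|q\|=1$, since then $\|q-P_{\mathfrak{H}}q\|^2=1-\|P_{\mathfrak{H}}q\|^2\le 1-\|\pi_h\|_V^{-2}\le 1-(\|\pi_h\|_V+1)^{-2}$.

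To prove that lower bound, fix $q\in\mathfrak{H}_h$ with $\|q\|=1$. As $\mathfrak{Z}_h^k\subset\mathfrak{Z}^k$ we have $q\in\mathfrak{Z}^k$, so \eqref{Hodge2} gives $q=d\rho+P_{\mathfrak{H}}q$ with $\rho\in V^{k-1}$. Applying $\pi_h$ and using $\pi_h q=q$ (valid since $q\in V_h^k$) together with $\pi_h d\rho=d\pi_h\rho$, one obtains
\[
\pi_h P_{\mathfrak{H}}q=q-d\pi_h\rho .
\]
Here $d\pi_h\rho\in\mathfrak{B}_h^k$ and $q\in\mathfrak{H}_h^k\perp\mathfrak{B}_h^k$, so this is an orthogonal splitting and $\|\pi_h P_{\mathfrak{H}}q\|^2=1+\|d\pi_h\rho\|^2\ge 1$. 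Both $q$ and $d\pi_h\rho$ are $d$-closed, hence so is $\pi_h P_{\mathfrak{H}}q$, so that $\|\pi_h P_{\mathfrak{H}}q\|_V=\|\pi_h P_{\mathfrak{H}}q\|$; moreover $P_{\mathfrak{H}}q\in\mathfrak{H}^k$ is $d$-closed, so $\|P_{\mathfrak{H}}q\|_V=\|P_{\mathfrak{H}}q\|$. Therefore
\[
1\le\|\pi_h P_{\mathfrak{H}}q\|=\|\pi_h P_{\mathfrak{H}}q\|_V\le\|\pi_h\|_V\,\|P_{\mathfrak{H}}q\|_V=\|\pi_h\|_V\,\|P_{\mathfrak{H}}q\| ,
\]
which is the required bound.

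I expect the main obstacle to be conceptual rather than computational: the ``approximation'' gap $\delta(\mathfrak{H},\mathfrak{H}_h)$ cannot be bounded by $\|\pi_h\|_V$ directly (the natural estimate of $\|(I-\pi_h)q\|$ for $q\in\mathfrak{H}$ only yields a quantity that may exceed $1$), so one is forced to estimate $\delta(\mathfrak{H}_h,\mathfrak{H})$ instead and transport the bound through the equal-dimension identity, which is precisely why the input $\dim\mathfrak{H}^k=\dim\mathfrak{H}_h^k$ is indispensable. The only place requiring care in the last step is that $\pi_h$ must both commute with $d$ and fix $V_h^k$, so that $\pi_h P_{\mathfrak{H}}q$ agrees with $q$ up to the element $d\pi_h\rho\in\mathfrak{B}_h^k$, which is $W$-orthogonal to $q$ and hence cannot decrease the norm of $q$.
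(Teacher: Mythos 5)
Your proof is correct, and for the key estimate it takes a genuinely different (and in fact sharper) route than the paper. The paper reduces to one direction by citing Lemma 2 of \cite{DH2014} for $\delta(\mathfrak{H},\mathfrak{H}_{h})=\delta(\mathfrak{H}_{h},\mathfrak{H})$, whereas you reprove this via the mutual adjointness of $P_{\mathfrak{H}_h}|_{\mathfrak{H}}$ and $P_{\mathfrak{H}}|_{\mathfrak{H}_h}$; both arguments rest on $\dim\mathfrak{H}^k=\dim\mathfrak{H}_h^k$, which is not a consequence of $V$-boundedness of $\pi_h$ alone in the abstract setting (take $V_h^k=\{0\}$, $\pi_h=0$) but is standard for the de Rham complex with the FEEC spaces, so you are on exactly the same footing as the paper here. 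For the one-sided bound, the paper invokes $\|q-P_{\mathfrak{H}}q\|\leq\|(I-\pi_{h})P_{\mathfrak{H}}q\|$ from Theorem 3.5 of \cite{AFW2010}, then a triangle inequality and the identity $\|I-\pi_h\|_V=\|\pi_h\|_V$ to reach $\|q\|\leq(\|\pi_h\|_V+1)\|P_{\mathfrak{H}}q\|$; you instead apply $\pi_h$ to the Hodge decomposition $q=d\rho+P_{\mathfrak{H}}q$ and exploit $\mathfrak{H}_h\perp\mathfrak{B}_h$ together with $d\pi_h\rho\in\mathfrak{B}_h$ to get $\|q\|\leq\|\pi_h\|_V\|P_{\mathfrak{H}}q\|$ directly. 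Both proofs ultimately hinge on the same two facts ($\pi_h$ commutes with $d$ and $\mathfrak{H}_h\perp\mathfrak{B}_h$), but yours avoids the $\|I-\pi_h\|_V=\|\pi_h\|_V$ identity and yields the sharper constant $\left(1-\|\pi_{h}\|_{V}^{-2}\right)^{1/2}$, which implies the stated bound since $\|\pi_h\|_V\geq1$. For the second inequality, your observation that $\pi_H$ restricts to a cochain projection of $(V_h,d)$ onto $(V_H,d)$ with no larger $V$-norm, so that the whole argument applies verbatim with \eqref{disHodge} in place of \eqref{Hodge2}, is a clean substitute for the paper's separate orthogonality argument $\pi_{H}(q-P_{\mathfrak{H}_{h}}q)\perp q-P_{\mathfrak{H}_{h}}q$.
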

\begin{proof}
Lemma 2 in \cite{DH2014} gives $\delta(\mathfrak{H},\mathfrak{H}_{h})=\delta(\mathfrak{H}_{h},\mathfrak{H})$. For any $q\in\mathfrak{H}_{h}$, we have the estimate (see Theorem 3.5 in \cite{AFW2010})
\begin{equation}\label{estimateh}
\|q-P_{\mathfrak{H}}q\|\leq\|(I-\pi_{h})P_{\mathfrak{H}}q\|.
\end{equation}
Note that $\|\tilde{q}\|=\|\tilde{q}\|_{V}$ for any $\tilde{q}\in\mathfrak{H}$ or $\mathfrak{H}_{h}$. Combining \eqref{estimateh} with a triangle inequality gives $\|q\|\leq\|q-P_{\mathfrak{H}}q\|+\|P_{\mathfrak{H}}q\|\leq\big(\|I-\pi_{h}\|_{V}+1\big)\|P_{\mathfrak{H}}q\|.$ Since $\pi_{h}$ is a projection, we have $\|I-\pi_{h}\|_{V}=\|\pi_{h}\|_{V}$ (see \cite{XZ2003}) and thus $$\|q\|\leq\big(\|\pi_{h}\|_{V}+1\big)\|P_{\mathfrak{H}}q\|.$$ It then follows from the above inequality and $\|q-P_{\mathfrak{H}}q\|^{2}=\|q\|^{2}-\|P_{\mathfrak{H}}q\|^{2}$ that
\begin{equation*}
    \begin{aligned}
     \delta(\mathfrak{H}_{h},\mathfrak{H})&=\sup_{\|q\|=1,q\in\mathfrak{H}_{h}}\sqrt{1-\|P_{\mathfrak{H}}q\|^{2}}\\
     &\leq\left(1-\frac{1}{\big(\|\pi_{h}\|_{V}+1\big)^{2}}\right)^{\frac{1}{2}}.
    \end{aligned}
\end{equation*}

For any $q\in\mathfrak{H}_{H}\subset\mathfrak{Z}_{h}=\mathfrak{B}_{h}\oplus\mathfrak{H}_{h}$, $q-P_{\mathfrak{H}_{h}}q\in\mathfrak{B}_{h}$ and thus $\pi_{H}(q-P_{\mathfrak{H}_{h}}q)\in\mathfrak{B}_{H}.$ Hence $\pi_{H}(q-P_{\mathfrak{H}_{h}}q)\perp q-P_{\mathfrak{H}_{h}}q$, and 
\begin{equation}\label{estimateH}
\|q-P_{\mathfrak{H}_{h}}q\|\leq\|q-P_{\mathfrak{H}_{h}}q-\pi_{H}(q-P_{\mathfrak{H}_{h}}q)\|=\|(I-\pi_{H})P_{\mathfrak{H}_{h}}q\|.
\end{equation}
Repalcing \eqref{estimateh} by \eqref{estimateH} and following the same argument in the proof of \eqref{HHh}, we obtain \eqref{HhHH}.
\end{proof}

Christiansen and Winther \cite{CW2008} gave a $\pi_{h}$ that is uniformly bounded in the $L^{2}$-norm (thus in the $V$-norm) on shape regular meshes. Let $V_{H}^{k}\subset V_{h}^{k}$ be two nested finite element spaces generated by AMFEM. Then $\gap(\mathfrak{H}_{h}^{k},\mathfrak{H}_{H}^{k})$ and $\gap(\mathfrak{H}^{k},\mathfrak{H}_{h}^{k})$ are uniformly bounded away from $1$ provided the mesh refinement algorithm preserves shape regularity.

The major component of this section is a separate a posteriori error estimator for $\|\sigma-\sigma_{h}\|_{H\Lambda^{k-1}}$, which is crucial for proving convergence and optimality of our adaptive algorithms   w.r.t.~the error $\|\sigma-\sigma_{h}\|_{H\Lambda^{k-1}}$. 
\begin{theorem}[separate error estimator for $\|\sigma-\sigma_{h}\|_{H\Lambda^{k-1}}$]\label{bdsigma}
For $f\in H^{1}\Lambda^{k}(\Th)$ with $1\leq k\leq n-1$ or $f\in L^2\Lambda^{n}(\Omega)$, there exists a constant $C_{\up}$ depending solely on $\Omega$ and the shape regularity of $\Th$, such that
\begin{equation*}
\|\sigma-\sigma_{h}\|^{2}_{H\Lambda^{k-1}}\leq C_{\up}\eta^{2}_{\sigma}(\Th)=C_{\up}\sum_{K\in\Th}\eta^{2}_{\sigma}(K),
\end{equation*}
where
\begin{equation*}
\eta^{2}_{\sigma}(K)=\left\{\begin{aligned}&h_{K}^{2}\|\delta(f-d\sigma_{h})\|^{2}_{K}+h_{K}\|\lr{\tr\star(f-d\sigma_{h})}\|_{\partial K}^{2},\quad&&k=1,\\
&h_{K}^{2}\|\delta(f-d\sigma_{h})\|^{2}_{K}+h_{K}^{2}\|\delta\sigma_{h}\|_{K}^{2}\\
&+h_{K}\|\lr{\tr\star(f-d\sigma_{h}})\|_{\partial K}^{2}+h_{K}\|\lr{\tr\star\sigma_{h}}\|_{\partial K}^{2},&&2\leq k\leq n-1,\\
&h_K^2\|\delta\sigma_h\|_K^2+h_K\|\lr{\tr\star\sigma_h}\|_{\partial K}^2+\|f-f_{\Th}\|_K^2,&&k=n,
\end{aligned}\right.
\end{equation*}
$f_{\Th}$ is the $L^2$ projection of $f$ onto $V_h^n$.
\end{theorem}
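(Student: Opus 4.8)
The plan is to decompose the error by the Hodge decomposition \eqref{Hodge2} in $W^{k-1}$ and bound the three components separately, using the commuting quasi-interpolation of Theorem \ref{rdqi}, the gap bound of Lemma \ref{gaph}, and Chen--Wu's estimate (Theorem \ref{bddsigma}), closing the argument by absorbing a harmonic term on the left. Write $\sigma-\sigma_h = d\rho + q + \delta\psi$ with $q = P_{\mathfrak{H}}(\sigma-\sigma_h)\in\mathfrak{H}^{k-1}$, $\rho\in(\mathfrak{Z}^{k-2})^{\perp V}$, $\psi\in\mathfrak{Z}_k^{*\perp V^*}$, $\|\rho\|\le c_P\|d\rho\|$; since $d(d\rho)=dq=0$ we have $d\delta\psi = d(\sigma-\sigma_h)$, hence $\|\delta\psi\|\le c_P\|d(\sigma-\sigma_h)\|$. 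Because $\sigma=\delta u\in R(\delta^k)=\mathfrak{Z}^{k-1\perp}$, the $\mathfrak{B}^{k-1}$- and $\mathfrak{H}^{k-1}$-parts of $\sigma$ vanish, so $d\rho = -P_{\mathfrak{B}}\sigma_h$, $q = -P_{\mathfrak{H}}\sigma_h$, and by orthogonality $\|\sigma-\sigma_h\|^2 = \|d\rho\|^2+\|q\|^2+\|\delta\psi\|^2$. The extra term $\|d(\sigma-\sigma_h)\|^2$ needed for the $H\Lambda^{k-1}$-norm is harmless: for $1\le k\le n-1$ it is $\lesssim\eta_{d\sigma}^2(\Th)\le\eta_\sigma^2(\Th)$ by Theorem \ref{bddsigma}; for $k=n$ one has $du=du_h=0$ and $p=p_h=0$ (there is no harmonic $n$-form, and by the gap bound of Lemma \ref{gaph} neither is there a discrete one), so $d\sigma=f$, $d\sigma_h=f_{\Th}$, and $d(\sigma-\sigma_h)=f-f_{\Th}$ gives $\|d(\sigma-\sigma_h)\|^2=\sum_K\|f-f_{\Th}\|_K^2\le\eta_\sigma^2(\Th)$ and also $\|\delta\psi\|\le c_P\|f-f_{\Th}\|$.

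The $\mathfrak{Z}^\perp$-part is therefore already done. For the exact part $d\rho=-P_{\mathfrak{B}}\sigma_h$, use $\langle d\rho,\sigma\rangle=0$ to get $\|d\rho\|^2=-\langle d\rho,\sigma_h\rangle$, and apply Theorem \ref{rdqi} to $\rho$ to write $d\rho=dz$ with $z\in H^1\Lambda^{k-2}(\Omega)$ and $\|z\|_{H^1}\lesssim\|\rho\|_{H\Lambda^{k-2}}$. Since $d\Pi_h^{k-2}z=\Pi_h^{k-1}dz\in\mathfrak{B}_h^{k-1}\subset\mathfrak{Z}_h^{k-1}$ and $\sigma_h\perp\mathfrak{Z}_h^{k-1}$, one has $\|d\rho\|^2=-\langle d(z-\Pi_h^{k-2}z),\sigma_h\rangle$. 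Integrating by parts element by element via \eqref{IP}, rewriting the boundary terms as jumps (the trace of $z-\Pi_h^{k-2}z$ is single-valued), and applying Cauchy--Schwarz with the approximation bounds of Theorem \ref{rdqi} and the Poincar\'e inequality $\|\rho\|_{H\Lambda^{k-2}}\lesssim\|d\rho\|$, one would obtain $\|d\rho\|\lesssim\big(\sum_K h_K^2\|\delta\sigma_h\|_K^2 + h_K\|\lr{\tr\star\sigma_h}\|_{\partial K}^2\big)^{1/2}$. This piece is absent when $k=1$ since $\mathfrak{B}^0=\{0\}$, and the regular decomposition is trivial when $k=2$.

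The harmonic part is the main obstacle: the naive bound $\|q\|=\|P_{\mathfrak{H}}\sigma_h\|\le\delta(\mathfrak{H}_h,\mathfrak{H})\,\|\sigma_h\|$ is useless because $\|\sigma_h\|$ is not of smaller order. The idea is to test against $\sigma-\sigma_h$ and use \eqref{error1}: $P_{\mathfrak{H}_h}q\in\mathfrak{H}_h^{k-1}\subset\mathfrak{Z}_h^{k-1}\subset V_h^{k-1}$ is closed, so $\langle\sigma-\sigma_h,P_{\mathfrak{H}_h}q\rangle=\langle dP_{\mathfrak{H}_h}q,u-u_h\rangle=0$, whence
\[
\|q\|^2=\langle\sigma-\sigma_h,q\rangle=\langle\sigma-\sigma_h,\,q-P_{\mathfrak{H}_h}q\rangle\le\|\sigma-\sigma_h\|\,\|q-P_{\mathfrak{H}_h}q\|\le\delta(\mathfrak{H},\mathfrak{H}_h)\,\|\sigma-\sigma_h\|\,\|q\|,
\]
so $\|q\|\le\delta(\mathfrak{H},\mathfrak{H}_h)\,\|\sigma-\sigma_h\|$. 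By Lemma \ref{gaph} and the existence of a uniformly $V$-bounded cochain projection, $\delta(\mathfrak{H},\mathfrak{H}_h)\le g<1$ with $g$ depending only on the shape regularity of $\Th$. Combining the three bounds,
\[
(1-g^2)\,\|\sigma-\sigma_h\|^2\le\|d\rho\|^2+\|\delta\psi\|^2\lesssim\eta_\sigma^2(\Th);
\]
dividing by $1-g^2$ (absorbed into $C_{\up}$) and adding $\|d(\sigma-\sigma_h)\|^2\lesssim\eta_\sigma^2(\Th)$ yields the assertion. The only delicate point is thus the harmonic component, handled by converting the sub-unit gap constant of Lemma \ref{gaph} into an absorbable contraction factor through the Galerkin orthogonality \eqref{error1}.
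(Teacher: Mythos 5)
Your proof is correct and follows essentially the same route as the paper: Hodge-decompose $\sigma-\sigma_h$, bound the exact part by testing $-\sigma_h$ against $d(z-\Pi_{h}z)$ after a regular decomposition and element-wise integration by parts, control the harmonic part by the gap estimate of Lemma \ref{gaph} combined with the Galerkin orthogonality \eqref{error1}, and absorb that part into the left-hand side. The one place you genuinely diverge is the coexact component: the paper writes it as $\delta v_2$ with $v_2=dw$, $w\in\mathfrak{Z}^{\perp V}$, regularly decomposes $w$, and reruns the integration-by-parts argument against $f-d\sigma_h$ to arrive at $\eta_{d\sigma}^2(\Th)$; you instead observe that this component lies in $\mathfrak{Z}^{k-1\perp}\cap V^{k-1}$ with exterior derivative equal to $d(\sigma-\sigma_h)$, apply the Poincar\'e inequality to get $\|\delta\psi\|\le c_P\|d(\sigma-\sigma_h)\|$, and then invoke Theorem \ref{bddsigma}. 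Since both arguments terminate at the same bound $\lesssim\eta_{d\sigma}^2(\Th)\le\eta_\sigma^2(\Th)$, your shortcut is legitimate and slightly more economical --- it reuses Chen--Wu's estimate rather than reproving a piece of it --- while the paper's longer route keeps that component self-contained. Your explicit justification of $\ab{\sigma-\sigma_h}{P_{\mathfrak{H}_h}q}=0$ via \eqref{error1} (using $dP_{\mathfrak{H}_h}q=0$) makes precise a step the paper leaves implicit, and your handling of the degenerate cases $k=1$ (no exact part) and $k=n$ ($d\sigma_h=f_{\Th}$, no harmonic $n$-forms) matches the paper's.
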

\begin{proof}
Assume $2\leq k\leq n-1$. We use Theorem \ref{bddsigma} to estimate $\|d(\sigma-\sigma_{h})\|$. To estimate $\|\sigma-\sigma_{h}\|$, let $\sigma-\sigma_{h}=dv_{1}+\delta v_{2}+q$ be the Hodge decomposition of $\sigma-\sigma_{h}$, where $v_{1}\in\mathfrak{Z}^{\perp V}$, $v_{2}\in\mathfrak{Z}^{*\perp V^*}$, and $q\in\mathfrak{H}$. Our strategy is to estimate each component in the decomposition separately. In doing so, let $v_{1}=d\varphi_{1}+z_{1}$ be a regular decomposition of $v_{1}$ and recall that $\Pi_{h}$ is the commuting quasi-interpolation in Theorem \ref{rdqi}. Then $dv_{1}=dz_{1}$ and $d(v_{1}-\Pi_{h}v_{1})=d(z_{1}-\Pi_{h}z_{1})$. By Theorem \ref{rdqi} and a Poincar\'e inequality, 
\begin{equation}\label{bdzvdv}
\|z_1\|_{H^{1}}\lesssim\|v_{1}\|_{H\Lambda^{k-2}}\lesssim\|dv_{1}\|.
\end{equation}
Using $\sigma\in\mathfrak{Z}^{\perp}, \sigma_{h}\in\mathfrak{Z}_{h}^{\perp}$ and \eqref{IP} for element-wise integration by parts, we have
\begin{equation*}
\begin{aligned}
\|dv_{1}\|^{2}&=\ab{\sigma-\sigma_{h}}{dv_{1}}\\
        &=\ab{-\sigma_{h}}{d(v_{1}-\Pi_{h}v_{1})}\\
        &=\ab{-\sigma_{h}}{d(z_{1}-\Pi_{h}z_{1})}\\
        &=\sum_{K\in\Th}-\ab{\delta\sigma_{h}}{z_{1}-\Pi_{h}z_{1}}_{K}-\int_{\partial K}\tr\star\sigma_{h}\wedge\tr(z_{1}-\Pi_{h}z_{1}).
\end{aligned}
\end{equation*}
{Since $z_{1}\in H^1\Lambda^{k-2}(\Omega)$ and $\Pi_{h}z_{1}$ is a finite element form in $V_h^{k-2}$, the trace $\tr(z_{1}-\Pi_{h}z_{1})\in H^{\frac{1}{2}}\Lambda^{k-2}(e)$ is well-defined on the face $e\subset\partial K$ and thus $\tr(z_{1}-\Pi_{h}z_{1})$  has no jump across $e$.} It then follows from regrouping the sum over all $\partial K$ and the Cauchy--Schwarz inequality that
\begin{equation*}
\begin{aligned}
\|dv_{1}\|^{2} &=\sum_{K\in\Th}-\ab{\delta\sigma_{h}}{z_{1}-\Pi_{h}z_{1}}_{K}-\sum_{e\in\Eh}\int_{e}\lr{\tr\star\sigma_{h}}\wedge\tr(z_{1}-\Pi_{h}z_{1})\\
        &\lesssim\left(\sum_{K\in\Th}h_{K}^{2}\|\delta\sigma_{h}\|_{K}^{2}+h_{K}\|\llbracket\tr\star\sigma_{h}\rrbracket\|_{\partial K}^{2}\right)^{\frac{1}{2}}\\
        &\quad\times\left(\sum_{K\in\Th}h_{K}^{-2}\|z-\Pi_{h}z\|_{K}^{2}+h_{K}^{-1}\|\tr(z-\Pi_{h}z)\|^{2}_{\partial K}\right)^{\frac{1}{2}}.
\end{aligned}
\end{equation*}
Using the approximation property of $\Pi_{h}$ in Theorem \ref{rdqi} and the bounds \eqref{bdzvdv}, we obtain
\begin{equation}\label{dv1}
\|dv_{1}\|\lesssim\left(\sum_{K\in\Th}h_{K}^{2}\|\delta\sigma_{h}\|_{K}^{2}+h_{K}\|\llbracket\tr\star\sigma_{h}\rrbracket\|_{\partial K}^{2}\right)^{\frac{1}{2}}.
\end{equation}
   
For the component $\delta v_{2}$ with $v_{2}\in\mathfrak{Z}^{*\perp V^{*}}\subset\mathfrak{Z}^{*\perp}=\mathfrak{B}$, let $v_{2}=dw$ where $w\in\mathfrak{Z}^{\perp V}$. Let $w=d\varphi_{2}+z_{2}$ be a regular decomposition of $w$. By the error equation \eqref{error2} and $p\perp\mathfrak{B}\supset\mathfrak{B}_{h},\ p_{h}\perp\mathfrak{B}_{h}$, we have
\begin{equation*}
\begin{aligned}
        \|\delta v_{2}\|^{2}&=\ab{d(\sigma-\sigma_{h})}{dw}=\ab{d(\sigma-\sigma_{h})}{d(w-\Pi_{h}w)}.
\end{aligned}
\end{equation*}
It then follows from $d(w-\Pi_{h}w)=d(z_{2}-\Pi_{h}z_{2})$ and $d\sigma=f-\delta du-p$ that
\begin{equation}\label{interdeltav2}
\begin{aligned}
        \|\delta v_{2}\|^{2}
        &=\ab{f-p-\delta du-d\sigma_{h}}{d(z_{2}-\Pi_{h}z_{2})}\\
        &=\ab{f-d\sigma_{h}}{d(z_{2}-\Pi_{h}z_{2})}.
\end{aligned}
\end{equation}
Similarly using \eqref{interdeltav2}, element-wise integration by parts, Theorem \ref{rdqi}, and bounds 
$$\|z_{2}\|_{H^{1}}\lesssim\|w\|_{H\Lambda^{k-1}}\lesssim\|dw\|=\|v_{2}\|\lesssim\|\delta v_{2}\|,$$
we obtain
\begin{equation}\label{deltav2}
\begin{aligned}
        \|\delta v_{2}\|^{2}&=\sum_{K\in\Th}\ab{\delta(f-d\sigma_{h})}{z_{2}-\Pi_{h}z_{2}}_{K}\\
        &+\sum_{e\in\Eh}\int_{e}\lr{\tr\star(f-d\sigma_{h})}\wedge\tr(z_{2}-\Pi_{h}z_{2})\\
        &\lesssim\left(\sum_{K\in\mathcal{T}_{h}}h_{K}^{2}\|\delta(f-d\sigma_{h})\|_{K}^{2}+h_{K}\|\lr{\tr\star(f-d\sigma_{h})}\|_{\partial K}^{2}\right)^{\frac{1}{2}}\\
        &\quad\times\left(\sum_{K\in\Th}h_{K}^{-2}\|z_{2}-\Pi_{h}z_{2}\|_{K}^{2}+h_{K}^{-1}\|\tr(z_{2}-\Pi_{h}z_{2})\|^{2}_{\partial K}\right)^{\frac{1}{2}}\\
        &\lesssim\left(\sum_{K\in\Th}\eta^{2}_{d\sigma}(K)\right)^{\frac{1}{2}}\|w\|_{H\Lambda^{k-1}}\lesssim\sum_{K\in\Th}\eta^{2}_{d\sigma}(K).
\end{aligned}
\end{equation}

In the end, the harmonic component is estimated by
\begin{equation}\label{qs}
        \begin{aligned}
        \|q\|&=\ab{\sigma-\sigma_{h}}{\frac{q}{\|q\|}}\\
        &=\ab{\sigma-\sigma_{h}}{\frac{q}{\|q\|}-P_{\mathfrak{H}_{h}}\frac{q}{\|q\|}}\\
        &\leq\delta(\mathfrak{H},\mathfrak{H}_{h})\|\sigma-\sigma_{h}\|.
        \end{aligned}
\end{equation}
    
By \eqref{HHh} in Lemma \ref{gaph}, $\delta(\mathfrak{H},\mathfrak{H}_{h})\leq C(\Th,\Omega)<1$, where $C(\Th,\Omega)$ is a constant depending on $\Omega$ and the shape regularity of $\Th$. Combining the above three bounds \eqref{dv1}, \eqref{deltav2}, and \eqref{qs}, we have
\begin{equation*}
\begin{aligned}
        \|\sigma-\sigma_{h}\|^{2}&=\|dv_{1}\|^{2}+\|\delta v_{2}\|^{2}+\|q\|^{2}\\
        &\leq\frac{1}{1-C(\Th,\Omega)^{2}}\big(\|dv_{1}\|^{2}+\|\delta v_{2}\|^{2}\big)\\
        &\lesssim\sum_{K\in\mathcal{T}_{h}}h_{K}^{2}\|\delta\sigma_{h}\|_{K}^{2}+h_{K}\|\lr{\tr\star\sigma_{h}}\|_{\partial K}^{2}+\eta^{2}_{d\sigma}(K).
\end{aligned}
\end{equation*}
When $k=1$, $\sigma-\sigma_{h}\in H\Lambda^{0}(\Omega)$ and then there is no boundary component $dv_{1}$ in the Hodge decomposition. In this case, $\|\sigma-\sigma_{h}\|^{2}\lesssim\sum_{K\in\Th}\eta^{2}_{d\sigma}(K)$. When $k=n$, $d\sigma=f$, $d\sigma_h=f_{\Th}$, and we replace $\eta_{d\sigma}(\Th)$ by $\|f-f_{\Th}\|$.
The proof is complete.
\end{proof}
 
We compare $\eta_{\sigma}(\Th)$ with the a posteriori error estimator  
\begin{equation}\label{errorDH}
\eta_{\text{DH}}(\Th)=\left(\sum_{K\in\Th}\eta^{2}_{-1}(K)+\eta^{2}_{0}(K)+\eta^{2}_{\mathfrak{H}}(K,p_{h})\right)^{\frac{1}{2}}+\mu\|u_{h}\|,
\end{equation}
in \cite{DH2014}, where $\mu$ is some assumed a posteriori upper bound on $\gap(\mathfrak{H},\mathfrak{H}_{h})$. Practical a posteriori estimates for $\gap(\mathfrak{H},\mathfrak{H}_{h})$ can be found in \cite{Demlow2017}. However, there is no efficiency result on the term $\mu\|u_{h}\|$, i.e., $\mu\|u_{h}\|\lesssim\|\sigma-\sigma_{h}\|_{H\Lambda^{k-1}}+\|u-u_{h}\|_{H\Lambda^{k}}+\|p-p_{h}\|$. In contrast, a local efficiency result of $\eta_{\sigma}(K)$ can be directly established by using the Verf\"urth bubble function technique (see \cite{DH2014} and \cite{V1996}). 
For $K\in\Th$, let $\omega_{K}$ denote the collection of $K$ and neighboring simplices in $\Th$ sharing an $(n-1)$-face with $K$. Let $\Omega_K=\cup_{K^\prime\in\omega_K}K^\prime$ denote the local patch surrounding $K.$
\begin{theorem}[efficiency]\label{eff}
For $K\in\Th,$ let $\osc_{\Th}(\sigma_{h},f,K)=0$ when $k=n$ and
\begin{equation*}
\begin{aligned}
&\osc^{2}_{\Th}(\sigma_{h},f,K)=
h_{K}^{2}\|(\emph{id}-Q_{K})\delta(f-d\sigma_{h})\|_{K}^{2}\\
&\qquad+h_{K}\|(\emph{id}-Q_{\partial K})\lr{\tr\star(f-d\sigma_{h})}\|_{\partial K}^{2}\text{ when   } 1\leq k\leq n-1,
\end{aligned}
\end{equation*}
where $Q_{K}$ is the $L^{2}$-projection onto the space of polynomial $(k-1)$-forms of degree $\leq r$ on $K$, $Q_{\partial K}$ is the $L^{2}(\partial K)$-projection onto the space of discontinuous piecewise polynomial $(n-k)$-forms of degree $\leq r^{\prime}$ on $\partial K$. 
For $f\in H^{1}\Lambda^{k}(\Th)$ with $1\leq k\leq n-1$ or $f\in L^2\Lambda^n(\Omega)$, there exists a constant $C_{\low}$ depending solely on $r, r^\prime, \Omega$ and the shape regularity of $\Th$, such that
\begin{equation*}
\begin{aligned}
C_{\low}\eta^{2}_{\sigma}(K)&\leq\|\sigma-\sigma_{h}\|^{2}_{H\Lambda^{k-1}(\Omega_{K})}+\sum_{K^\prime\in\omega_{K}}\osc^{2}_{\Th}(\sigma_{h},f,K^\prime).
\end{aligned}
\end{equation*}
\end{theorem}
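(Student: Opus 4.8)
The plan is to establish local efficiency of $\eta_\sigma(K)$ by the classical Verf\"urth bubble-function technique, treating the three ingredients of $\eta_\sigma^2(K)$ --- the element residual $h_K^2\|\delta(f-d\sigma_h)\|_K^2$, the jump residual $h_K\|\lr{\tr\star(f-d\sigma_h)}\|_{\partial K}^2$, and (for $2\le k\le n-1$) the extra terms $h_K^2\|\delta\sigma_h\|_K^2$ and $h_K\|\lr{\tr\star\sigma_h}\|_{\partial K}^2$ --- one at a time. The governing identities are the error equations $\sigma=\delta u$, $d\sigma=f-\delta du-p$ (equivalently $f-d\sigma=\delta du+p$) on $\Omega$, and $\sigma_h\in\mathfrak{Z}_h^\perp$ which encodes $\langle\sigma_h,d\tau\rangle=0$ after integration by parts, i.e.\ $\delta\sigma_h=0$ elementwise up to the interelement jumps $\lr{\tr\star\sigma_h}$ against discrete test forms. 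The oscillation term $\osc_\Th(\sigma_h,f,K)$ absorbs exactly the non-polynomial part of $\delta(f-d\sigma_h)$ and $\lr{\tr\star(f-d\sigma_h)}$, so throughout one replaces these residuals by their projections $Q_K$, $Q_{\partial K}$ and pays the oscillation on the patch $\omega_K$.

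First I would bound the element term. On a single simplex $K$, let $b_K$ be the standard interior bubble (a polynomial vanishing on $\partial K$ with $b_K\sim 1$ in the interior), and set $\phi=b_K\,\star^{-1}Q_K\delta(f-d\sigma_h)$ --- a form supported in $K$ that can be extended by zero to $\Omega$. Using $\langle Q_K\delta(f-d\sigma_h),Q_K\delta(f-d\sigma_h)\rangle_K\lesssim\langle b_K Q_K\delta(f-d\sigma_h),Q_K\delta(f-d\sigma_h)\rangle_K$ (norm equivalence on the finite-dimensional polynomial space, with constant depending on $r$ and shape regularity), inserting $\delta(f-d\sigma_h)$ for $Q_K\delta(f-d\sigma_h)$ up to oscillation, integrating by parts (the boundary term drops since $\phi$ vanishes on $\partial K$) to turn $\langle\delta(f-d\sigma_h),\phi\rangle_K$ into $\langle f-d\sigma_h,d\phi\rangle_K=\langle\delta du+p,d\phi\rangle_K$, and then using $\|d\phi\|_K\lesssim h_K^{-1}\|\phi\|_K\lesssim h_K^{-1}\|Q_K\delta(f-d\sigma_h)\|_K$ together with $\|\delta du+p\|_K\lesssim\|d(u-u_h)\|$-type control --- more precisely rewriting $f-d\sigma_h=d(\sigma-\sigma_h)+(f-d\sigma)-d\sigma_h+d\sigma_h$... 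I would instead use the cleaner route $f-d\sigma_h = d(\sigma-\sigma_h) + (f-d\sigma) = d(\sigma-\sigma_h)+\delta du + p$, so after integrating by parts $\langle f-d\sigma_h,d\phi\rangle_K = \langle d(\sigma-\sigma_h),d\phi\rangle_K + \langle \delta du+p,d\phi\rangle_K$; but actually the standard and simplest choice is to test against $d\phi$ directly from $\langle d(\sigma-\sigma_h),d\phi\rangle$ is not available since $\phi\notin V_h$. The correct elementary argument: since $\sigma=\delta u$, for smooth compactly supported $\phi$ in $K$ we have $\langle\sigma,d\phi\rangle_K=\langle\delta u,d\phi\rangle_K$, and $d\sigma=f-\delta du-p$ gives $\langle\delta(f-d\sigma_h),\phi\rangle_K=\langle\delta d(\sigma-\sigma_h),\phi\rangle_K=\langle d(\sigma-\sigma_h),d\phi\rangle_K$ (integration by parts, no boundary term). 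Hence $h_K^2\|Q_K\delta(f-d\sigma_h)\|_K^2\lesssim h_K^2\langle\delta(f-d\sigma_h),\phi\rangle_K\cdot h_K^{-2}/\|Q_K\delta(f-d\sigma_h)\|_K\cdots$, cleaning up: $h_K\|Q_K\delta(f-d\sigma_h)\|_K\lesssim\|d(\sigma-\sigma_h)\|_K$, and by the triangle inequality $h_K\|\delta(f-d\sigma_h)\|_K\lesssim\|d(\sigma-\sigma_h)\|_K+h_K\|(\mathrm{id}-Q_K)\delta(f-d\sigma_h)\|_K$. Since $\|d(\sigma-\sigma_h)\|_K\le\|\sigma-\sigma_h\|_{H\Lambda^{k-1}(\Omega_K)}$, this term is controlled.

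Next I would bound the jump term using face bubbles. For an interior face $e$ of $K$, let $b_e$ be the face bubble supported on the two simplices sharing $e$, and let $\psi_e$ be an extension to $\omega_K$ of $b_e\,\star^{-1}Q_{\partial K}\lr{\tr\star(f-d\sigma_h)}$ with $\|\psi_e\|_{K'}\lesssim h_K^{1/2}\|Q_{\partial K}\lr{\tr\star(f-d\sigma_h)}\|_e$ and $\|d\psi_e\|_{K'}\lesssim h_K^{-1/2}\|Q_{\partial K}\lr{\tr\star(f-d\sigma_h)}\|_e$ for $K'$ in the patch of $e$. Integrating by parts elementwise over the two simplices adjacent to $e$ and using $d\sigma=f-\delta du-p$ as before, the interior residual contributions reduce to the already-bounded element term, and the surviving boundary contribution is $\int_e\lr{\tr\star(f-d\sigma_h)}\wedge\tr\psi_e$, whence $h_K\|Q_{\partial K}\lr{\tr\star(f-d\sigma_h)}\|_e^2\lesssim\|d(\sigma-\sigma_h)\|_{\omega_e}^2+h_K^2\|\delta(f-d\sigma_h)\|_{\omega_e}^2$; combining with the element estimate and triangle inequality against $\osc$ gives the desired bound. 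Finally, for $2\le k\le n-1$, the terms $h_K^2\|\delta\sigma_h\|_K^2$ and $h_K\|\lr{\tr\star\sigma_h}\|_{\partial K}^2$ are handled by the same bubble machinery applied to $\sigma-\sigma_h$ in place of $f-d\sigma_h$: using $\sigma=\delta u$ so $\langle\sigma,d\phi\rangle=0$ for $\phi$ with vanishing trace (as $\delta\sigma = \delta\delta u + (\text{harmonic part, zero since }\sigma\in\mathfrak Z^\perp)$ — more directly $\delta\sigma=0$ because $\sigma=\delta u$ implies $\sigma\in\mathfrak B^*_{k-1}\subset\mathfrak Z^{k-1}_{*}{}^{\perp}$... the clean fact is $d\sigma\in W$ but we need $\delta\sigma$: since $\sigma=\delta u$ and $\delta\delta=0$, $\delta\sigma=0$), we get $\langle\delta\sigma_h,\phi\rangle_K=\langle\delta(\sigma_h-\sigma),\phi\rangle_K=\langle\sigma_h-\sigma,d\phi\rangle_K$, so $h_K\|\delta\sigma_h\|_K\lesssim\|\sigma-\sigma_h\|_K$, and similarly $h_K^{1/2}\|\lr{\tr\star\sigma_h}\|_e\lesssim\|\sigma-\sigma_h\|_{\omega_e}+h_K\|\delta\sigma_h\|_{\omega_e}$; no oscillation arises here since $\delta\sigma_h$ and $\tr\star\sigma_h$ are already piecewise polynomial of the right degrees. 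Summing the local contributions over the finitely many faces and neighbors in $\omega_K$ yields the stated inequality.

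The main obstacle is bookkeeping the exterior-calculus integration by parts on a single simplex with the correct boundary terms and making sure the bubble forms $\phi,\psi_e$ land in the right Sobolev spaces so that $\langle\delta(f-d\sigma_h),\phi\rangle_K = \langle d(\sigma-\sigma_h),d\phi\rangle_K$ holds with no boundary contribution --- this requires $f-d\sigma_h\in H^1\Lambda^k(\Th)$ (assumed) and $\phi$ with vanishing trace on $\partial K$, and for the face terms it requires carefully tracking that $\tr\star(f-d\sigma_h)$ is the quantity that appears after integrating by parts over the two adjacent simplices. A secondary technical point is establishing the polynomial norm-equivalences and inverse estimates for $k$-forms (rather than scalars) on the reference simplex; these are standard but must be invoked with constants depending only on $r,r',n$ and shape regularity, exactly as in \cite{DH2014,V1996}. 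Once these are in place, everything else is the routine Verf\"urth argument.
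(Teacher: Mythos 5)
Your proposal is correct and is exactly the route the paper intends: the paper gives no written proof of this theorem, saying only that local efficiency ``can be directly established by using the Verf\"urth bubble function technique (see \cite{DH2014} and \cite{V1996})'', and your element-bubble/face-bubble argument --- using $f-d\sigma_h=d(\sigma-\sigma_h)+\delta du+p$ together with $\langle d\phi,\delta du+p\rangle=0$ for the residual and jump of $f-d\sigma_h$, and $\delta\sigma=0$, $\langle\sigma,d\phi\rangle_K=0$ for the $\delta\sigma_h$ and $\lr{\tr\star\sigma_h}$ terms --- is precisely that technique, with the oscillation entering exactly where you place it. The only piece you omit is the $k=n$ term $\|f-f_{\Th}\|_K^2$, which is immediate since $f=d\sigma$ and $f_{\Th}=d\sigma_h$ give $\|f-f_{\Th}\|_K=\|d(\sigma-\sigma_h)\|_K\leq\|\sigma-\sigma_h\|_{H\Lambda^{n-1}(K)}$.
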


Note that $\osc_{\Th}(\sigma_{h},f,K)$ here is slightly different from the data oscillation given by (5.13)--(5.15) in \cite{DH2014}. The advantage here is the dominance in Theorem \ref{perturb},
which is helpful for proving optimality.
    
We then give an a posteriori estimate for the harmoinc error.
\begin{theorem}[separate estimator for $\|p-p_{h}\|$]\label{bdp}
For $1\leq k\leq n-1$ and $f\in H^{1}\Lambda^{k}(\Th)$, it holds that 
\begin{equation*}
\|p-p_{h}\|^2\lesssim\eta^2_{p}(\Th)=\sum_{K\in\Th}\eta^{2}_{p}(K),
\end{equation*}
where
\begin{equation*}
\eta_{p}^{2}(K)=h_{K}^{2}\|\delta p_{h}\|_{K}^{2}+h_{K}\|\lr{\tr\star p_{h}}\|_{\partial K}^{2}+\eta^{2}_{d\sigma}(K).
\end{equation*}
\end{theorem}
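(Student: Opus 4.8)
The plan is to follow the structure of the proof of Theorem~\ref{bdsigma}: decompose $p-p_{h}$ orthogonally, estimate the exact part by element-wise integration by parts tested against the commuting quasi-interpolation $\Pi_{h}$, and control the harmonic part via the gap estimate of Lemma~\ref{gaph} together with the error equation~\eqref{error2}. First I would note that $p_{h}\in\mathfrak{H}_{h}^{k}\subset\mathfrak{Z}_{h}^{k}\subset\mathfrak{Z}^{k}$ while $p\in\mathfrak{H}^{k}\subset\mathfrak{Z}^{k}$, so $p-p_{h}\in\mathfrak{Z}^{k}=\mathfrak{B}^{k}\oplus\mathfrak{H}^{k}$ and has no $\mathfrak{B}_{k}^{*}$-component. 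Hence write $p-p_{h}=dv_{1}+q$ with $v_{1}\in\mathfrak{Z}^{\perp V}$, $q=P_{\mathfrak{H}}(p-p_{h})\in\mathfrak{H}^{k}$, $\|p-p_{h}\|^{2}=\|dv_{1}\|^{2}+\|q\|^{2}$, and $\|v_{1}\|\le c_{P}\|dv_{1}\|$.

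For $\|dv_{1}\|$, take a regular decomposition $v_{1}=d\varphi_{1}+z_{1}$ from Theorem~\ref{rdqi}, so that $dv_{1}=dz_{1}$ and (using also a Poincar\'e inequality) $\|z_{1}\|_{H^{1}}\lesssim\|dv_{1}\|$. Since $p\perp\mathfrak{B}^{k}\ni dv_{1}$ and $p_{h}\perp\mathfrak{B}_{h}^{k}\ni d\Pi_{h}z_{1}$, one gets $\|dv_{1}\|^{2}=\ab{p-p_{h}}{dz_{1}}=-\ab{p_{h}}{d(z_{1}-\Pi_{h}z_{1})}$. Element-wise integration by parts via~\eqref{IP}, the single-valuedness of $\tr(z_{1}-\Pi_{h}z_{1})$ across faces, the Cauchy--Schwarz inequality, and the approximation property of $\Pi_{h}$ in Theorem~\ref{rdqi} then give $\|dv_{1}\|\lesssim\big(\sum_{K\in\Th}h_{K}^{2}\|\delta p_{h}\|_{K}^{2}+h_{K}\|\lr{\tr\star p_{h}}\|_{\partial K}^{2}\big)^{1/2}$, exactly as in the derivation of~\eqref{dv1}.

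For the harmonic part I would use $\|q\|^{2}=\ab{p-p_{h}}{q}$ (valid since $dv_{1}\perp\mathfrak{H}^{k}$ and $q=P_{\mathfrak{H}}(p-p_{h})$) and split $q=P_{\mathfrak{H}_{h}}q+(q-P_{\mathfrak{H}_{h}}q)$. The discrete-harmonic piece $P_{\mathfrak{H}_{h}}q\in\mathfrak{H}_{h}^{k}\subset V_{h}^{k}$ has $d(P_{\mathfrak{H}_{h}}q)=0$, so testing the error equation~\eqref{error2} with $v=P_{\mathfrak{H}_{h}}q$ gives $\ab{p-p_{h}}{P_{\mathfrak{H}_{h}}q}=-\ab{d(\sigma-\sigma_{h})}{P_{\mathfrak{H}_{h}}q}$, bounded by $\|d(\sigma-\sigma_{h})\|\,\|q\|$; the remaining piece is bounded by $\delta(\mathfrak{H},\mathfrak{H}_{h})\,\|p-p_{h}\|\,\|q\|$. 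Since $\|p-p_{h}\|^{2}=\|dv_{1}\|^{2}+\|q\|^{2}$ and $\delta(\mathfrak{H},\mathfrak{H}_{h})<1$ uniformly by Lemma~\ref{gaph}, absorbing the $\|q\|$-terms yields $\|q\|\lesssim\|d(\sigma-\sigma_{h})\|+\|dv_{1}\|$, and Theorem~\ref{bddsigma} replaces $\|d(\sigma-\sigma_{h})\|$ by $\eta_{d\sigma}(\Th)$. Adding $\|dv_{1}\|^{2}$ and $\|q\|^{2}$ then gives $\|p-p_{h}\|^{2}\lesssim\sum_{K\in\Th}\eta_{p}^{2}(K)$.

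The main obstacle is the harmonic part: because $\mathfrak{H}_{h}^{k}\not\subseteq\mathfrak{H}^{k}$ one cannot simply test against a continuous harmonic form, so the near-orthogonality has to be extracted from the gap estimate, and one must check that the chosen test form lies in $V_{h}^{k}$ (it does, being discrete harmonic) so that~\eqref{error2} applies. Everything else is a routine adaptation of the arguments leading to~\eqref{dv1}--\eqref{deltav2}.
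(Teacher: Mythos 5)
Your proposal is correct and follows essentially the same route as the paper: your orthogonal splitting $p-p_h=dv_1+q$ is literally the paper's decomposition $p-p_h=(P_{\mathfrak{H}}p_h-p_h)+(p-P_{\mathfrak{H}}p_h)$ (since $p_h\in\mathfrak{Z}$ forces $dv_1=-P_{\mathfrak{B}}p_h$ and $q=p-P_{\mathfrak{H}}p_h$), and your treatment of the harmonic part via $P_{\mathfrak{H}_h}q$, the error equation \eqref{error2}, the gap estimate of Lemma \ref{gaph}, and Theorem \ref{bddsigma} matches the paper's absorption argument. The only difference is cosmetic: where the paper cites Lemma 9 of \cite{DH2014} for the bound $\|P_{\mathfrak{H}}p_h-p_h\|\lesssim\big(\sum_K h_K^2\|\delta p_h\|_K^2+h_K\|\lr{\tr\star p_h}\|_{\partial K}^2\big)^{1/2}$, you rederive it inline by the regular-decomposition and quasi-interpolation argument of Theorem \ref{bdsigma}, which is a valid and self-contained substitute.
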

\begin{proof}
Let $p-p_{h}=(p-P_{\mathfrak{H}}p_{h})+(P_{\mathfrak{H}}p_{h}-p_{h}).$ Since $p_{h}\in\mathfrak{Z}_{h}\subset\mathfrak{Z}=\mathfrak{B}\oplus\mathfrak{H}$,  $p_{h}-P_{\mathfrak{H}}p_{h}\in\mathfrak{B}$. Therefore $p_{h}-P_{\mathfrak{H}}p_{h}\perp q$ and 
\begin{equation}\label{totalp}
\begin{aligned}
        \|p-P_{\mathfrak{H}}p_{h}\|&=\sup_{q\in\mathfrak{H},\|q\|=1}\ab{p-P_{\mathfrak{H}}p_{h}}{q}=\sup_{q\in\mathfrak{H},\|q\|=1}\ab{p-p_{h}}{q}\\
        &=\sup_{q\in\mathfrak{H},\|q\|=1}\big(\ab{p-p_{h}}{q-P_{\mathfrak{H}_{h}}q}+\ab{p-p_{h}}{P_{\mathfrak{H}_{h}}q}\big)\\
        &\leq\delta(\mathfrak{H},\mathfrak{H}_{h})\|p-p_{h}\|+\sup_{q\in\mathfrak{H},\|q\|=1}\ab{p-p_{h}}{P_{\mathfrak{H}_{h}}q}.
\end{aligned}
\end{equation}
The error equation \eqref{error2} implies
\begin{equation}\label{bdp1}
\begin{aligned}
\sup_{q\in\mathfrak{H},\|q\|=1}\ab{p-p_{h}}{P_{\mathfrak{H}_{h}}q}&=\sup_{q\in\mathfrak{H},\|q\|=1}\ab{d(\sigma-\sigma_{h})}{-P_{\mathfrak{H}_{h}}q}\\
&=\sup_{q\in\mathfrak{H},\|q\|=1}\ab{d(\sigma-\sigma_{h})}{q-P_{\mathfrak{H}_{h}}q}\\
        &\leq\delta(\mathfrak{H},\mathfrak{H}_{h})\|d(\sigma-\sigma_{h})\|.
\end{aligned}
\end{equation}

By the equation (2.12) and Lemma 9 in \cite{DH2014}, 
\begin{equation}\label{bdp2}
        \begin{aligned}
        \|P_{\mathfrak{H}}p_{h}-p_{h}\|&\lesssim\sup_{\|\phi\|_{H\Lambda^{k-1}}=1}\ab{p_{h}}{d(\phi-\Pi_{h}\phi)}\\
        &\lesssim\left(\sum_{K\in\mathcal{T}_{h}}h_{K}^{2}\|\delta p_{h}\|_{K}^{2}+h_{K}\|\lr{\tr\star p_{h}}\|_{\partial K}^{2}\right)^{\frac{1}{2}}.
        \end{aligned}
\end{equation}

Combining \eqref{totalp}--\eqref{bdp2} and using Theorem \ref{bddsigma}, we obtain
\begin{equation*}
\begin{aligned}
\|p-p_{h}\|&\leq\|p-P_{\mathfrak{H}}p_{h}\|+\|P_{\mathfrak{H}}p_{h}-p_{h}\|\\
&\leq\frac{1}{1-\delta(\mathfrak{H},\mathfrak{H}_{h})}\big(\delta(\mathfrak{H},\mathfrak{H}_{h})\|d(\sigma-\sigma_{h})\|+\|P_{\mathfrak{H}}p_{h}-p_{h}\|\big)\\
&\lesssim\left(\sum_{K\in\mathcal{T}_{h}}h_{K}^{2}\|\delta p_{h}\|_{K}^{2}+h_{K}\|\lr{\tr\star p_{h}}\|_{\partial K}^{2}\right)^{\frac{1}{2}}+\eta_{d\sigma}(\Th).
\end{aligned}
\end{equation*}
The proof is complete.
\end{proof}
When $k=n$, $\mathfrak{H}_h^n=\mathfrak{H}^n=\{0\}$, and $\eta_p$ is useless. The efficiency of $\eta_{p}$ follows from the efficiency argument in \cite{DH2014}.
\begin{theorem}[efficiency of $\eta_{p}(K)$]
For $1\leq k\leq n-1$ and $f\in H^{1}\Lambda^{k}(\Th)$, the local efficiency holds:
\begin{equation*}
\begin{aligned}
\eta^2_{p}(K)&\lesssim\|p-p_{h}\|^2_{\Omega_{K}}+\|d(\sigma-\sigma_h)\|^2_{\Omega_K}\\
&\quad+\sum_{K^\prime\in\omega_K}h^2_{K^\prime}\|\delta(f-P_{h}f)\|^2_{K^\prime}+h_{K^\prime}\|\lr{\tr\star(f-P_{h}f)}\|^2_{\partial K^\prime},
\end{aligned}
\end{equation*}
where $P_{h}f$ is the $L^2$-projection of $f$ onto the space of $k$-forms with discontinuous piecewise polynomial coefficients of arbitrary but fixed degree.
\end{theorem}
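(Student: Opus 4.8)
The plan is to treat the three groups of terms in $\eta_p^2(K)=h_K^2\|\delta p_h\|_K^2+h_K\|\lr{\tr\star p_h}\|_{\partial K}^2+\eta_{d\sigma}^2(K)$ separately. The first two contain only $p_h$, and I would bound them by $\|p-p_h\|$ on $\Omega_K$ \emph{without any data oscillation}. The decisive fact is that the exact harmonic form $p\in\mathfrak{H}^k$ satisfies $\delta p=0$ in $\Omega$ and $\tr\star p=0$ on $\partial\Omega$ (since $p\in\mathring H^*\Lambda^k(\Omega)$), and consequently $d\star p=\pm\star\delta p=0$, so $\star p\in H\Lambda^{n-k}(\Omega)$ and $\lr{\tr\star p}=0$ on every interior face as well. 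Because $\delta p_h$ and $\tr\star p_h$ are already piecewise polynomial, no polynomial projection of these residuals is needed, which is why the first two terms carry no oscillation. The third group $\eta_{d\sigma}^2(K)$ is exactly the local indicator of Chen and Wu's estimator from Theorem~\ref{bddsigma}, whose efficiency is proved by the standard bubble-function technique and is the source of the $f$-oscillation in the claim.

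For the volume residual I would use the interior bubble $b_K$. Since $v:=\delta p_h|_K$ is a polynomial $(k-1)$-form, set $\phi=b_K v$, so $\tr\phi=0$ on $\partial K$, $\|\phi\|_K\lesssim\|v\|_K$, and $\|d\phi\|_K\lesssim h_K^{-1}\|v\|_K$. From $\|v\|_K^2\lesssim\ab{\delta p_h}{\phi}_K$, integration by parts~\eqref{IP} on $K$ (its boundary term vanishing because $\tr\phi=0$), and subtraction of $\ab{d\phi}{p}_K=\ab{\phi}{\delta p}_K=0$, one gets $\|v\|_K^2\lesssim\|d\phi\|_K\,\|p-p_h\|_K$, hence $h_K\|\delta p_h\|_K\lesssim\|p-p_h\|_K$. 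For the jump residual, let $e$ be a face of $K$ and $\omega_e\subset\Omega_K$ the patch of the one or two simplices containing $e$. Using the face bubble $b_e$ I would build a smooth form $z_e$ supported in $\omega_e$, vanishing on $\partial\omega_e$, with $\int_e\tr z_e\wedge\lr{\tr\star p_h}\gtrsim\|\lr{\tr\star p_h}\|_e^2$, $\|z_e\|_{K'}\lesssim h_e^{1/2}\|\lr{\tr\star p_h}\|_e$ and $\|dz_e\|_{K'}\lesssim h_e^{-1/2}\|\lr{\tr\star p_h}\|_e$. Summing~\eqref{IP} over the simplices of $\omega_e$, the traces on all faces except $e$ cancel or vanish, so $\int_e\tr z_e\wedge\lr{\tr\star p_h}=\sum_{K'\subset\omega_e}\big(\ab{dz_e}{p_h}_{K'}-\ab{z_e}{\delta p_h}_{K'}\big)$; subtracting the same identity with $p$, whose left-hand side is $\int_e\tr z_e\wedge\lr{\tr\star p}=0$ and whose two right-hand terms vanish, and inserting $h_{K'}\|\delta p_h\|_{K'}\lesssim\|p-p_h\|_{K'}$ from the volume bound, gives $\|\lr{\tr\star p_h}\|_e\lesssim h_e^{-1/2}\|p-p_h\|_{\omega_e}$, i.e. $h_K\|\lr{\tr\star p_h}\|_{\partial K}^2\lesssim\|p-p_h\|_{\Omega_K}^2$. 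On a boundary face one uses $\tr\star p=0$ on $\partial\Omega$ in place of $\lr{\tr\star p}=0$.

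It remains to bound $\eta_{d\sigma}^2(K)$, which is the efficiency of the estimator $\eta_{d\sigma}$ and follows the arguments of \cite{DH2014,CW2017}. The one observation worth recording is that for any $(k-1)$-form $\phi$ supported in a single element (or element patch), testing the continuous equation~\eqref{HL} with $v=d\phi\in V^k$ kills the term $\ab{du}{dv}$ entirely, because $d(d\phi)=0$, and kills $\ab{d\phi}{p}$ because $p\perp\mathfrak{B}^k=R(d^{k-1})$; hence $\ab{f-d\sigma}{d\phi}=0$ and therefore $\ab{f-d\sigma_h}{d\phi}_K=\ab{d(\sigma-\sigma_h)}{d\phi}_K$. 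Combining this with the usual interior/face bubble test functions built from $\delta(P_hf-d\sigma_h)$ and $\lr{\tr\star(P_hf-d\sigma_h)}$, and estimating the remaining residual parts $\delta(f-P_hf)$ and $\lr{\tr\star(f-P_hf)}$ as oscillation, yields $\eta_{d\sigma}^2(K)\lesssim\|d(\sigma-\sigma_h)\|_{\Omega_K}^2+\sum_{K'\in\omega_K}\big(h_{K'}^2\|\delta(f-P_hf)\|_{K'}^2+h_{K'}\|\lr{\tr\star(f-P_hf)}\|_{\partial K'}^2\big)$. Adding the three bounds completes the proof.

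I expect the only real technical points to be the trace-regularity justification of the jump quantities $\lr{\tr\star p_h}$, $\lr{\tr\star(f-d\sigma_h)}$ and of the element-wise integration by parts~\eqref{IP} on a general Lipschitz domain where $p$ and $\sigma$ need not be smooth — but this is precisely the setting already handled via the regular decomposition of Theorem~\ref{rdqi} in \cite{DH2014} — together with the routine but slightly fiddly sign/orientation bookkeeping in the face-patch sums. The genuinely new point is minor and structural: harmonicity of $p$ (namely $\delta p=0$ and $\tr\star p=0$) removes \emph{all} dependence on the data $f$ from the first two terms of $\eta_p(K)$, so that they are controlled by $\|p-p_h\|$ alone.
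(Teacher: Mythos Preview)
Your proposal is correct and is precisely the bubble-function efficiency argument of \cite{DH2014} that the paper invokes; the paper itself does not supply a proof but simply states that ``the efficiency of $\eta_{p}$ follows from the efficiency argument in \cite{DH2014}.'' Your key structural observation---that $p\in\mathfrak{H}^k\subset\mathfrak{B}^{k\perp}=N(\delta^k)\subset\mathring{H}^*\Lambda^k(\Omega)$, hence $\delta p=0$ and $\tr\star p=0$, so the $p_h$-residuals produce no data oscillation---is exactly what makes the cited argument go through, and your treatment of $\eta_{d\sigma}^2(K)$ via $\langle f-d\sigma,d\phi\rangle=0$ matches the standard proof in \cite{CW2017,DH2014}.
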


The next theorem gives a posteriori estimate on $\|d(u-u_{h})\|$, which is similar to the error estimator in \cite{CW2017} but involves the harmonic term $p_h$ here. 
\begin{theorem}[separate estimator for $\|d(u-u_{h})\|$]\label{bddu}
For $1\leq k\leq n-1$ and $f\in H^{1}\Lambda^{k}(\Th)$, we have the a posteriori estimate
\begin{equation*}
\|d(u-u_{h})\|^2\lesssim\eta_{du}^2(\Th)=\sum_{K\in\Th}\eta^{2}_{du}(K),
\end{equation*}
where
\begin{equation*}
\begin{aligned}
\eta^{2}_{du}(K)&=h_{K}^{2}\|f-d\sigma_{h}-\delta du_{h}-p_{h}\|_{K}^{2}+h_{K}^{2}\|\delta(f-d\sigma_{h}-p_{h})\|_{K}^{2}\\&+h_{K}\|\lr{\tr\star(f-d\sigma_{h}-p_{h}}\|_{\partial K}^{2}+h_{K}\|\lr{\tr\star du_{h}}\|_{\partial K}^{2}.
\end{aligned}
\end{equation*}
\end{theorem}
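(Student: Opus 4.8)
The plan is to mimic the structure of the proof of Theorem~\ref{bddsigma} (Chen--Wu's estimate for $\|d(\sigma-\sigma_h)\|$), since $d(u-u_h)\in H\Lambda^{k+1}(\Omega)$ enters the second equation of \eqref{HL} in an analogous way to how $d(\sigma-\sigma_h)$ enters. First I would observe that, by the error equation \eqref{error2} and the discrete Hodge decomposition \eqref{disHodge}, $du_h\in\mathfrak{B}_h^{k+1}\subset\mathfrak{Z}^{*\perp}$, so $d(u-u_h)$ lies in $\mathfrak{B}^{k+1}\oplus\mathfrak{B}_{k+2}^*$ with the harmonic part of $d(u-u_h)$ controllable via the gap estimate in Lemma~\ref{gaph}; in fact, since $du\in\mathfrak{B}^{k+1}$, one expects $d(u-u_h)$ to lie essentially in $\mathfrak{B}^{k+1}\oplus\mathfrak{B}^*_{k+2}$ and the argument splits $d(u-u_h)$ into its solenoidal and ``coexact plus harmonic'' components.

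\medskip
The key steps, in order: (1)~Write the Hodge decomposition $d(u-u_h)=d\alpha + \delta\beta + q$ with $\alpha\in\mathfrak{Z}^{\perp V}$, $\beta\in\mathfrak{Z}^{*\perp V^*}$, $q\in\mathfrak{H}^{k+1}$; but since $d(u-u_h)$ is exact, actually $\delta\beta = 0$ and $q=0$ in the continuous decomposition, so only $d\alpha$ survives — wait, that is not right because $du_h$ need not be exact in the continuous sense. More carefully, $du_h\in\mathfrak{B}_h^{k+1}\subset\mathfrak{B}^{k+1}$, so $d(u-u_h)\in\mathfrak{B}^{k+1}$, hence $d(u-u_h) = \delta\beta$ for some $\beta$, i.e.\ the whole error is coexact-type; then $\|d(u-u_h)\|^2 = \ab{d(u-u_h)}{\delta\beta}$. (2)~Write $\beta=dw$, $w\in\mathfrak{Z}^{\perp V}$, apply a regular decomposition $w=d\varphi+z$ from Theorem~\ref{rdqi}, and use the commuting quasi-interpolation so that $d(w-\Pi_h w) = d(z-\Pi_h z)$. (3)~Plug the Galerkin property: $\ab{d(u-u_h)}{dv}$ for $v\in V_h^k$ can be rewritten using \eqref{error2} as $-\ab{d(\sigma-\sigma_h)}{v}-\ab{v}{p-p_h}$, and then $d\sigma = f-\delta du - p$ from \eqref{compactHL} lets us replace everything by computable residuals $f - d\sigma_h - \delta du_h - p_h$. (4)~Integrate by parts element-wise using \eqref{IP}, collecting the volume residual $h_K^2\|f-d\sigma_h-\delta du_h - p_h\|_K^2$, the tangential-derivative residual $h_K^2\|\delta(f-d\sigma_h-p_h)\|_K^2$, and two jump terms $h_K\|\lr{\tr\star(f-d\sigma_h-p_h)}\|_{\partial K}^2$ and $h_K\|\lr{\tr\star du_h}\|_{\partial K}^2$ (the latter arising because $du_h$ is only piecewise smooth). (5)~Apply Cauchy--Schwarz, the approximation property of $\Pi_h$ in Theorem~\ref{rdqi}, and the Poincar\'e chain $\|z\|_{H^1}\lesssim\|w\|_{H\Lambda^k}\lesssim\|dw\|=\|\beta\|\lesssim\|\delta\beta\| = \|d(u-u_h)\|$ to absorb the $\|d(u-u_h)\|$ factor, yielding the stated bound.

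\medskip
The main obstacle I anticipate is the bookkeeping in step (4): unlike $\sigma_h$ (which is a single form), the residual now involves the three discrete quantities $\sigma_h, u_h, p_h$ entering through different differential operators ($d\sigma_h$, $\delta du_h$, $p_h$), so the element-wise integration by parts produces two distinct families of boundary/jump terms — one from $\tr\star(f-d\sigma_h-p_h)$ and one from $\tr\star du_h$ — and one must verify that the ``missing'' interface terms (e.g.\ trace jumps of quantities that happen to be continuous, like $\tr(z-\Pi_h z)$ for $z\in H^1$) genuinely vanish, exactly as in the parenthetical remark in the proof of Theorem~\ref{bdsigma}. A secondary subtlety is ensuring the identity $d\sigma = f - \delta du - p$ is used only in the form that makes all terms computable, i.e.\ recognizing that $f - d\sigma_h - \delta du_h - p_h$ is precisely the interior residual of the strong form $d\sigma + \delta du + p = f$; once that is in place the estimate follows the template of \cite{CW2017} and Theorem~\ref{bddsigma} essentially verbatim, with $p_h$ carried along as an extra additive piece inside the residual.
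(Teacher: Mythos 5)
Your overall strategy and the core computation (steps (3)--(5)) coincide with the paper's proof: test the error equation \eqref{error2} with the quasi-interpolant of a potential for $d(u-u_{h})$, use $f=d\sigma+\delta du+p$ to convert everything to the computable residual, regularly decompose the potential, and integrate by parts element-wise; the four estimator terms arise exactly as you describe. However, the setup in your steps (1)--(2) is backwards and, taken literally, would kill the argument. Since $u_{h}\in V_{h}^{k}\subset V^{k}$, the form $du_{h}=d(u_{h})$ \emph{is} exact, so $d(u-u_{h})\in\mathfrak{B}^{k+1}=R(d^{k})$ is exact, not ``coexact-type''. Writing $d(u-u_{h})=\delta\beta$ is degree-inconsistent with your later use of $\beta=dw$ with $w\in H\Lambda^{k}$ (it forces $\beta$ to be a $(k+2)$-form and $w$ a $(k+1)$-form), and it is impossible for a nonzero exact form because $\mathfrak{B}^{k+1}\perp\mathfrak{B}_{k+1}^{*}$: indeed $\ab{d(u-u_{h})}{\delta\beta}=\ab{dd(u-u_{h})}{\beta}=0$, so your starting identity $\|d(u-u_{h})\|^{2}=\ab{d(u-u_{h})}{\delta\beta}$ would read $\|d(u-u_{h})\|^{2}=0$.

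The fix is the one your first instinct suggested and the one the paper uses: take $v\in\mathfrak{Z}^{k\perp V}$ with $dv=d(u-u_{h})$ (this $v$ is your $w$; no $\beta$, no harmonic component, and no gap estimate from Lemma \ref{gaph} are needed), write $\|d(u-u_{h})\|^{2}=\ab{d(u-u_{h})}{dv}$, and apply \eqref{error2} with the test function $\Pi_{h}v$, using $v\perp\mathfrak{Z}^{k}$ to replace $\Pi_{h}v$ by $\Pi_{h}v-v$ in the terms $\ab{d(\sigma-\sigma_{h})}{\cdot}$ and $\ab{\cdot}{p-p_{h}}$. The Poincar\'e chain then closes as $\|\varphi\|_{H^{1}}+\|z\|_{H^{1}}\lesssim\|v\|_{H\Lambda^{k}}\lesssim\|dv\|=\|d(u-u_{h})\|$, without the spurious link $\|\beta\|\lesssim\|\delta\beta\|$. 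With this correction, your steps (3)--(5) --- including the bookkeeping of the two jump families $\lr{\tr\star(f-d\sigma_{h}-p_{h})}$ and $\lr{\tr\star du_{h}}$ and the vanishing of the interface contributions of $\tr(z-\Pi_{h}z)$ --- reproduce the paper's proof essentially verbatim.
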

\begin{proof}
Let $v\in\mathfrak{Z}^{\perp V}$ such that $dv=d(u-u_{h})$. Let $v=d\varphi+z$ be the regular decomposition of $v$. $\eqref{error2}$ implies 
\begin{equation*}
\begin{aligned}
        \|d(u-u_{h})\|^{2}&=\ab{d(u-u_{h})}{dv}\\
        &=\ab{d(u-u_{h})}{d(v-\Pi_{h}v)}-\ab{d(\sigma-\sigma_{h})}{\Pi_{h}v}-\ab{\Pi_{h}v}{p-p_{h}}.
\end{aligned}
\end{equation*}
Then by $v\perp\mathfrak{Z}^k$, $f=d\sigma+\delta du+p$, and $v-\Pi_{h}v=d(\varphi-\Pi_{h}\varphi)+(z-\Pi_{h}z)$, we have
\begin{equation*}
\begin{aligned}
        \|d(u-u_{h})\|^{2}&=\ab{d(u-u_{h})}{d(v-\Pi_{h}v)}+\ab{d(\sigma-\sigma_{h})}{v-\Pi_{h}v}\\
        &\qquad+\ab{v-\Pi_{h}v}{p-p_{h}}\\
        &=\ab{f-d\sigma_{h}-p_{h}}{v-\Pi_{h}v}-\ab{du_{h}}{d(v-\Pi_{h}v)}\\
        &=\ab{f-d\sigma_{h}-p_{h}}{d(\varphi-\Pi_{h}\varphi)+z-\Pi_{h}z}-\ab{du_{h}}{d(z-\Pi_{h}z)}.
\end{aligned}
\end{equation*}
The theorem then follows from the standard element-wise integration  by parts, Theorem \ref{rdqi}, and the Poincar\'e inequality $\|v\|_{H\Lambda^{k}}\lesssim\|dv\|$.
\end{proof}
In fact, $\eta_{du}(K)$ in Theorem \ref{bddu} is just  $\eta_{0}(K)$ in \cite{DH2014}. Local efficiency of $\eta_{du}$ can be found from Lemma 12 in \cite{DH2014}.

In the end, we can proceed to derive the a posteriori error estimate for $\|u-u_{h}\|$ by following the proof of Theorem \ref{bdsigma}. However, the corresponding error bound would be similar to the coupled error bound $\eta_{\text{DH}}$ \eqref{errorDH}. In particular, it involves the term $\|P_{\mathfrak{H}}u_{h}\|\leq\mu\|u_{h}\|$. Therefore the separate error estimator for $\|u-u_{h}\|$ has no advantage.

\section{Convergence}\label{convergence}
Given a subset $\mathcal{M}\subset\mathcal{T}_{h},$ define
$$\eta^2_{\sigma}(\mathcal{M}):=\sum_{K\in\mathcal{M}}\eta^{2}_{\sigma}(K),$$
and similar for $\eta_{p}(\mathcal{M})$ and $\eta_{du}(\mathcal{M})$.  
We now present the adaptive algorithm \textsf{AMFEM1} for solving the Hodge Laplacian problem \eqref{HL} based on the standard adaptive feedback loop
$$\textsf{SOLVE}\longrightarrow\textsf{ESTIMATE}\longrightarrow\textsf{MARK}\longrightarrow\textsf{REFINE}.$$ 
\textsf{AMFEM1} is designed to reduce the error $\|\sigma-\sigma_{h}\|_{H\Lambda^{k-1}}^2+\|p-p_{h}\|^2+\|d(u-u_{h})\|^2$. It is unconditionally convergent starting from any initial coarse mesh. 
\begin{algorithm}
\textsf{AMFEM1}. Given an initial mesh $\mathcal{T}_{0}$, marking parameters $0<\theta_{\sigma}, \theta_{p}, \theta_{du}<1$, and an error tolerance $\tol>0$. Set $\ell=0$.
\begin{itemize}
\item[]\textbf{Step 1} Solve \eqref{DHL} on $\mathcal{T}_{\ell}$ to obtain the finite element solution $(\sigma_{\ell},u_{\ell},p_{\ell})$. 
\item[]\textbf{Step 2} Compute the error indicators $\eta_{\sigma}(K)$, $\eta_{p}(K)$ and $\eta_{du}(K)$ on each element $K\in\mathcal{T}_{\ell}$ and $\eta_{\ell}=\big(\eta^{2}_{\sigma}(\mathcal{T}_{\ell})+\eta^{2}_{p}(\mathcal{T}_{\ell})+\eta^{2}_{du}(\mathcal{T}_{\ell})\big)^{\frac{1}{2}}.$ If $\eta_\ell\leq\tol$, return $(\sigma_\ell,u_\ell,p_\ell)$ and $\mathcal{T}_\ell$; else go to \textbf{Step 3}.
\item[]\textbf{Step 3} Select a subset $\mathcal{M}_{\ell}$ of $\mathcal{T}_{\ell}$ such that \begin{align}
%\eta_{d\sigma}(\mathcal{M}_{l})&\geq\theta_{1}\eta_{d\sigma}(\mathcal{T}_{l}),\label{mark1}\\
\eta_{\sigma}(\mathcal{M}_{\ell})&\geq\theta_{\sigma}\eta_{\sigma}(\mathcal{T}_{\ell}),\label{mark1}\\
\eta_{p}(\mathcal{M}_{\ell})&\geq\theta_{p}\eta_{p}(\mathcal{T}_{\ell}),\label{mark2}\\
\eta_{du}(\mathcal{M}_{\ell})&\geq\theta_{du}\eta_{du}(\mathcal{T}_{\ell}).\label{mark3}
\end{align}
\item[]\textbf{Step 4} Refine each element in $\mathcal{M}_{\ell}$ and necessary neighboring elements by a mesh refinement algorithm preserving shape regularity to get a conforming mesh $\mathcal{T}_{\ell+1}$. Set $\ell=\ell+1$ and go to \textbf{Step 1}.
\end{itemize}
\end{algorithm}

{As in Theorems \ref{bddsigma}, \ref{bdp}, and \ref{bddu}, $f$ is required to be contained in $H^1\Lambda^k(\mathcal{T}_0)$ when $1\leq k\leq n-1$, that is, the discontinuity of $f$ is aligned with the initial mesh $\mathcal{T}_0$ in the adaptive algorithm. }

\textbf{Step 3} is often called D\"orfler marking in the literature. Marking properties \eqref{mark1}--\eqref{mark3} can be achieved by first selecting $\mathcal{M}_\ell$ such that \eqref{mark1} holds, then successively enlarging $\mathcal{M}_\ell$ to make \eqref{mark2} and \eqref{mark3} satisfied. The marking step can be flexible, see Remark \ref{remarkAMFEM1} for details. Candidates for mesh refinement in \textbf{Step 4} include bisection or quad-refinement with bisection closure, see e.g., \cite{Mitchell1990,Traxler1997,Bey2000,V1996}. 
 
Let $\{(\sigma_{\ell},p_{\ell},u_{\ell}),\mathcal{T}_{\ell}\}_{\ell\geq0}$ be a sequence of finite element solutions and meshes produced by \textsf{AMFEM1}. Let 
\begin{equation*}
\begin{aligned}
e_{d\sigma,\ell}&=\|d(\sigma-\sigma_{\ell})\|^{2},\quad \eta_{d\sigma,\ell}=\eta^{2}_{d\sigma}(\mathcal{T}_{\ell}),\quad \Delta_{d\sigma,\ell}=\|d(\sigma_{\ell}-\sigma_{\ell+1})\|^{2},\\
e_{\sigma,\ell}&=\|\sigma-\sigma_{\ell}\|^{2},\quad \eta_{\sigma,\ell}=\eta^{2}_{\sigma}(\mathcal{T}_{\ell}),\quad\Delta_{\sigma,\ell}=\|\sigma_{\ell}-\sigma_{\ell+1}\|^{2},
\end{aligned}
\end{equation*}
and similar for other quantities. We list ingredients for proving convergence of \textsf{AMFEM1} in the next two lemmas.

\begin{lemma}\label{lemma1}
For $\ell\geq0$,
\begin{equation}\label{ortho1}
e_{d\sigma,\ell+1}=e_{d\sigma,\ell}-\Delta_{d\sigma,\ell}.
\end{equation}
In addition, for arbitrary $\{\varepsilon_{i}\}_{i=1}^{3}\subset(0,1),$ there exist constants $C^\sigma_{\emph{qo}}, C^{du}_{\emph{qo}}>0$ depending only on $\mathcal{T}_0$ and $\Omega,$ such that
\begin{subequations}
\begin{align}
e_{\sigma,\ell+1}&\leq\frac{1}{1-\varepsilon_1}e_{\sigma,\ell}-\Delta_{\sigma,\ell}+\frac{\varepsilon_1^{-1}}{1-\varepsilon_1}C^\sigma_{\emph{qo}}\Delta_{d\sigma,\ell},\label{ortho2}\\
e_{p,\ell+1}&\leq e_{p,\ell}-(1-\varepsilon_2)\Delta_{p,\ell}+\varepsilon_2^{-1}e_{d\sigma,\ell+1},\label{ortho3}\\
e_{du,\ell+1}&\leq e_{du,\ell}-(1-\varepsilon_3)\Delta_{du,\ell}+2\varepsilon_3^{-1}C^{du}_{\emph{qo}}(e_{d\sigma,\ell+1}+e_{p,\ell+1})\label{ortho4}.
\end{align}
\end{subequations}
\end{lemma}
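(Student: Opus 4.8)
The plan is to establish the four relations by exploiting the nested structure $\mathcal{T}_\ell\subset\mathcal{T}_{\ell+1}$ (hence $V_\ell^k\subset V_{\ell+1}^k$ for every index $k$) together with the error equations \eqref{error1}--\eqref{error2} applied on the finer space. First I would prove \eqref{ortho1}: since $d(\sigma_\ell-\sigma_{\ell+1})\in dV_{\ell+1}^{k-1}$ and $\sigma,\sigma_{\ell+1}\in\mathfrak{Z}^\perp$ resp.~$\mathfrak{Z}_{\ell+1}^\perp$, the key point is a Galerkin-type orthogonality $\ab{\sigma-\sigma_{\ell+1}}{\sigma_\ell-\sigma_{\ell+1}}$-type identity; more directly, testing the first error equation \eqref{error1} on $\mathcal{T}_{\ell+1}$ with an appropriate $\tau$ shows $\sigma_\ell-\sigma_{\ell+1}$ is $\ab{d\cdot}{\cdot}$-orthogonal to $\sigma-\sigma_{\ell+1}$ in the relevant sense, yielding the exact Pythagoras identity $\|d(\sigma-\sigma_\ell)\|^2=\|d(\sigma-\sigma_{\ell+1})\|^2+\|d(\sigma_\ell-\sigma_{\ell+1})\|^2$. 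This is the cleanest of the four and serves as a template.

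Next, for \eqref{ortho2}, I would write $\|\sigma-\sigma_\ell\|^2=\|\sigma-\sigma_{\ell+1}\|^2+2\ab{\sigma-\sigma_{\ell+1}}{\sigma_{\ell+1}-\sigma_\ell}+\|\sigma_{\ell+1}-\sigma_\ell\|^2$ and control the cross term. The subtlety is that $\sigma_{\ell+1}-\sigma_\ell$ need not lie in $\mathfrak{Z}_{\ell+1}^\perp$, so a clean orthogonality fails; instead I expect to use \eqref{error1} on $\mathcal{T}_{\ell+1}$, namely $\ab{\sigma-\sigma_{\ell+1}}{\tau}=\ab{d\tau}{u-u_{\ell+1}}$, pick $\tau=\sigma_{\ell+1}-\sigma_\ell$, and then bound $\ab{d(\sigma_{\ell+1}-\sigma_\ell)}{u-u_{\ell+1}}$ by $\|d(\sigma_{\ell+1}-\sigma_\ell)\|$ times a quantity that must be shown comparable to $\|d(\sigma-\sigma_\ell)\|$ (or its discrete increment) using the discrete Poincar\'e inequality with $\|\pi_h\|_V$ uniformly bounded on shape-regular meshes — this is where $C^\sigma_{\mathrm{qo}}$, depending only on $\mathcal{T}_0$ and $\Omega$, enters. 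Young's inequality with parameter $\varepsilon_1$ then splits the cross term into the stated shape; I would be careful that the $\|\sigma_{\ell+1}-\sigma_\ell\|^2$ term gets absorbed with the correct sign to produce $-\Delta_{\sigma,\ell}$.

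For \eqref{ortho3} I would again expand $e_{p,\ell}=e_{p,\ell+1}+2\ab{p-p_{\ell+1}}{p_{\ell+1}-p_\ell}+\Delta_{p,\ell}$; since $p-p_{\ell+1}$ and $p_{\ell+1}-p_\ell$ both lie in $\mathfrak{Z}\oplus$ spaces that are \emph{not} orthogonal, I would instead test \eqref{error2} on $\mathcal{T}_{\ell+1}$ with a harmonic-type increment $v$ to relate the cross term to $d(\sigma-\sigma_{\ell+1})$, then apply Young with $\varepsilon_2$. For \eqref{ortho4}, the analogous expansion for $e_{du,\ell}$ produces a cross term that \eqref{error2} reduces to a combination of $\ab{d(\sigma-\sigma_{\ell+1})}{\cdot}$ and $\ab{p-p_{\ell+1}}{\cdot}$ against $d(u_{\ell+1}-u_\ell)$; estimating the test forms via a regular/discrete decomposition and Poincar\'e introduces the factor $C^{du}_{\mathrm{qo}}$, and Young with $\varepsilon_3$ plus the factor $2$ gives the stated bound.

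The main obstacle I anticipate is \eqref{ortho4}: because the $du_h$-error couples to \emph{both} $\sigma$ and $p$ through \eqref{error2}, and because $u_{\ell+1}-u_\ell$ lies in the kernel of $d$ only after projecting, one must carefully isolate the $d$-component (the $\delta d$ term in the Hodge Laplacian acts only on $du$), write $u_{\ell+1}-u_\ell = $ (closed part) $+$ (coclosed part) within $V_{\ell+1}^k$, and verify that the closed part contributes nothing to $\ab{d(u-u_{\ell+1})}{d(u_{\ell+1}-u_\ell)}$ while the coclosed part is controlled by a discrete Poincar\'e inequality with the uniform constant $c_P\|\pi_h\|_V$ — this uniform boundedness across the whole adaptive sequence (guaranteed by shape regularity and \cite{CW2008}) is exactly what makes $C^{du}_{\mathrm{qo}}$ depend only on $\mathcal{T}_0$ and $\Omega$, and getting that dependence right, rather than a mesh-dependent constant, is the delicate point.
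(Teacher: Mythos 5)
Your overall architecture---expand the square and control the cross term with the error equations and a discrete Poincar\'e inequality---is the paper's, and your treatments of \eqref{ortho1}, \eqref{ortho3}, and \eqref{ortho4} are essentially correct. (One small misattribution: the Pythagoras identity \eqref{ortho1} comes from \eqref{error2} tested with $v=d(\sigma_\ell-\sigma_{\ell+1})\in\mathfrak{B}_{\ell+1}^{k}$, using $dv=0$ and $p-p_{\ell+1}\perp\mathfrak{B}_{\ell+1}^{k}$, not from \eqref{error1}.)

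The genuine gap is in \eqref{ortho2}. You propose to handle the cross term $\ab{\sigma-\sigma_{\ell+1}}{\sigma_{\ell+1}-\sigma_\ell}$ by taking $\tau=\sigma_{\ell+1}-\sigma_\ell$ in \eqref{error1}, converting it to $\ab{d(\sigma_{\ell+1}-\sigma_\ell)}{u-u_{\ell+1}}$, and then bounding $\|u-u_{\ell+1}\|$ by a quantity comparable to $\|d(\sigma-\sigma_\ell)\|$ or its increment via a discrete Poincar\'e inequality. That last step fails: $u-u_{\ell+1}$ generally has a nonzero $\mathfrak{B}^{k}$-component (and harmonic contamination from $\mathfrak{H}^{k}\neq\mathfrak{H}_{\ell+1}^{k}$), so it lies in no space on which a Poincar\'e inequality holds, and $\|u-u_{\ell+1}\|$ is simply not controlled by any quantity appearing in the lemma; attempting to re-express it through the error equation returns you to $\ab{\sigma-\sigma_{\ell+1}}{\tau}$, so the argument is circular. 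The missing idea is to test \eqref{error1} with $\tau\in\mathfrak{Z}_{\ell+1}^{k-1}$ (so that $d\tau=0$) to conclude $\sigma-\sigma_{\ell+1}\perp\mathfrak{Z}_{\ell+1}^{k-1}$. The cross term then equals $\ab{\sigma-\sigma_{\ell+1}}{P_{\mathfrak{Z}_{\ell+1}^{\perp}}(\sigma_\ell-\sigma_{\ell+1})}$, and the discrete Poincar\'e inequality applies legitimately to the projection, which satisfies $dP_{\mathfrak{Z}_{\ell+1}^{\perp}}(\sigma_\ell-\sigma_{\ell+1})=d(\sigma_\ell-\sigma_{\ell+1})$, yielding the bound $2e_{\sigma,\ell+1}^{1/2}(C^{\sigma}_{\mathrm{qo}}\Delta_{d\sigma,\ell})^{1/2}$. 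Young's inequality with parameter $\varepsilon_1$ and absorption of $\varepsilon_1 e_{\sigma,\ell+1}$ into the left-hand side then produce exactly the $\tfrac{1}{1-\varepsilon_1}$ prefactors of \eqref{ortho2}; the uniformity of $C^{\sigma}_{\mathrm{qo}}$ in $\ell$ is, as you correctly note, the uniform $V$-boundedness of the cochain projections on shape-regular refinements of $\mathcal{T}_0$.
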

\begin{proof}
The orthogonality \eqref{ortho1} directly follows from \eqref{error2}. Let $\mathfrak{Z}_{\ell+1}^{\perp}$ be $\mathfrak{Z}_{h}^{\perp}$ based on $\mathcal{T}_{\ell+1}$. Using $\sigma-\sigma_{\ell+1}\perp\mathfrak{Z}_{\ell+1}$ and the discrete Poincar\'e inequality
$\|P_{\mathfrak{Z}_{\ell+1}^{\perp}}(\sigma_{\ell}-\sigma_{\ell+1})\|\leq (C^\sigma_{\text{qo}})^\frac{1}{2}\|dP_{\mathfrak{Z}_{\ell+1}^{\perp}}(\sigma_{\ell}-\sigma_{\ell+1})\|=(C_{\text{qo}}^\sigma\Delta_{d\sigma,\ell})^\frac{1}{2},$
we obtain
\begin{equation*}\label{orthosigma}
\begin{aligned}
e_{\sigma,\ell+1}&=e_{\sigma,\ell}-\Delta_{\sigma,\ell}+2\ab{\sigma-\sigma_{\ell+1}}{P_{\mathfrak{Z}_{\ell+1}^{\perp}}(\sigma_{\ell}-\sigma_{\ell+1})}\\
&\leq e_{\sigma,\ell}-\Delta_{\sigma,\ell}+2e_{\sigma,\ell+1}^\frac{1}{2}(C_{\text{qo}}^\sigma\Delta_{d\sigma,\ell})^\frac{1}{2},\\
&\leq e_{\sigma,\ell}-\Delta_{\sigma,\ell}+\varepsilon_1e_{\sigma,\ell+1}+\varepsilon_1^{-1}C_{\text{qo}}^\sigma\Delta_{d\sigma,\ell}.
\end{aligned}
\end{equation*}
The proof of \eqref{ortho2} is complete. Similarly \eqref{error2} implies 
\begin{equation*}
\begin{aligned}
e_{p,\ell+1}&=e_{p,\ell}-\Delta_{p,\ell}+2\ab{p-p_{\ell+1}}{p_{\ell}-p_{\ell+1}}\\
&=e_{p,\ell}-\Delta_{p,\ell}-2\ab{d(\sigma-\sigma_{\ell+1})}{p_{\ell}-p_{\ell+1}}\\
&\leq e_{p,\ell}-\Delta_{p,\ell}+2e_{d\sigma,\ell+1}^\frac{1}{2}\Delta_{p,\ell}^\frac{1}{2}\\
&\leq e_{p,\ell}-(1-\varepsilon_2)\Delta_{p,\ell}+\varepsilon_2^{-1}e_{d\sigma,\ell+1}.
\end{aligned}
\end{equation*}
In the end, let $v_{\ell+1}\in\mathfrak{Z}^{\perp}_{\ell+1}$ such that $dv_{\ell+1}=d(u_{\ell}-u_{\ell+1})$. Using \eqref{error2} and $\|v_{\ell+1}\|\leq(C_{\text{qo}}^{du})^\frac{1}{2}\|dv_{\ell+1}\|=(C_{\text{qo}}^{du}\Delta_{du,\ell})^{\frac{1}{2}}$, \eqref{ortho3} can be proved by
\begin{equation*}
\begin{aligned}
e_{du,\ell+1}&=e_{du,\ell}-\Delta_{du,\ell}+2\ab{d(u-u_{\ell+1})}{d(u_{\ell}-u_{\ell+1})}\\
&=e_{du,\ell}-\Delta_{du,\ell}-2\ab{d(\sigma-\sigma_{\ell+1})}{v_{\ell+1}}-2\ab{v_{\ell+1}}{p-p_{\ell+1}}\\
&\leq e_{du,\ell}-\Delta_{du,\ell}+2(C_{\text{qo}}^{du})^\frac{1}{2}\big(e_{d\sigma,\ell+1}^{\frac{1}{2}}+e_{p,\ell+1}^{\frac{1}{2}})\Delta_{du,\ell}^{\frac{1}{2}},\\
&\leq e_{du,\ell}-\Delta_{du,\ell}+\varepsilon_3\Delta_{du,\ell}+2\varepsilon_3^{-1}C_{\text{qo}}^{du}\big(e_{d\sigma,\ell+1}+e_{p,\ell+1}).
\end{aligned}
\end{equation*}
The proof is complete.
\end{proof}
Lemma \ref{lemma1} deals with error reduction on two consecutive meshes. Another ingredient of convergence analysis is the following estimator reduction lemma.
\begin{lemma}\label{lemma2}
\begin{subequations}
\begin{align}
\eta_{\sigma,\ell+1}&\leq\beta_{\sigma}\eta_{\sigma,\ell}+C_{\sigma}(\Delta_{d\sigma,\ell}+\Delta_{\sigma,\ell}),\label{cts2}\\
\eta_{p,\ell+1}&\leq\beta_{p}\eta_{p,\ell}+C_{p}(\Delta_{d\sigma,\ell}+\Delta_{p,\ell}),\label{cts3}\\
\eta_{du,\ell+1}&\leq\beta_{du}\eta_{du,\ell}+C_{du}(\Delta_{d\sigma,\ell}+\Delta_{p,\ell}+\Delta_{du,\ell})\label{cts4}.
\end{align}
\end{subequations}
where $0<\beta_{\sigma}, \beta_{p}, \beta_{du}<1$ and $C_{\sigma}, C_p, C_{du}>0$ depend only on marking parameters $\theta_{\sigma}, \theta_{p}, \theta_{du}$ and $\mathcal{T}_{0}$. 
\end{lemma}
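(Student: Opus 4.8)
The plan is to follow the estimator‑reduction paradigm of \cite{CKNS2008}, carried out separately for $\eta_\sigma$, $\eta_p$, $\eta_{du}$; I describe \eqref{cts2} in detail and then indicate the parallel modifications. First I would freeze the coarse solution on the fine mesh. Fix $K\in\mathcal{T}_{\ell+1}$ and let $K'\in\mathcal{T}_\ell$ be its ancestor ($K\subseteq K'$). Since every marked element is refined and the refinement preserves shape regularity, there is a fixed $\gamma\in(0,1)$ (e.g.\ $\gamma=2^{-1/n}$ for bisection) with $h_K\le\gamma h_{K'}$ when $K\subsetneq K'$ and $h_K=h_{K'}$ otherwise. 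Because the spaces are nested, $\sigma_\ell\in V_\ell^{k-1}\subset V_{\ell+1}^{k-1}$ is still piecewise polynomial on $\mathcal{T}_{\ell+1}$, so $\eta_\sigma^2(K)$ can be evaluated with $\sigma_\ell$ in place of $\sigma_{\ell+1}$; write $\eta_\sigma^2(K;\sigma_\ell)$ for the result. Triangle inequalities in each residual and each jump, followed by Young's inequality with parameter $\delta>0$, give $\eta_\sigma^2(K)\le(1+\delta)\,\eta_\sigma^2(K;\sigma_\ell)+(1+\delta^{-1})\,\rho_\sigma^2(K)$, where $\rho_\sigma^2(K)$ collects the terms involving $\sigma_\ell-\sigma_{\ell+1}$.

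Next I would show the frozen part contracts. Grouping children by ancestor and using $L^2$-additivity over the partitions of $K'$ and $\partial K'$, together with the fact that $d\sigma_\ell$ and $\sigma_\ell$ are continuous across faces created inside a refined $K'$ and $f$ is continuous there — because $f\in H^1\Lambda^k(\mathcal{T}_0)$ and $K'$ sits inside one cell of $\mathcal{T}_0$, so the residuals and jumps on such new faces contribute nothing to $\eta_\sigma^2(\cdot\,;\sigma_\ell)$ — one obtains $\sum_{K\subseteq K'}\eta_\sigma^2(K;\sigma_\ell)\le\gamma\,\eta_\sigma^2(K')$ for $K'\in\mathcal{M}_\ell$ and $\le\eta_\sigma^2(K')$ otherwise. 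Summing over $K'$ and invoking the D\"orfler property \eqref{mark1} gives $\sum_{K\in\mathcal{T}_{\ell+1}}\eta_\sigma^2(K;\sigma_\ell)\le\big(1-(1-\gamma)\theta_\sigma^2\big)\eta_{\sigma,\ell}$. (For $k=n$ the unscaled summand $\|f-f_{\mathcal{T}_h}\|^2$ is not reduced by $\gamma$, but it is non-increasing under refinement by locality of the $L^2$-projection, and in fact decreases by exactly $\|f_{\mathcal{T}_{\ell+1}}-f_{\mathcal{T}_\ell}\|^2=\Delta_{d\sigma,\ell}$ since $d\sigma_h=f_{\mathcal{T}_h}$; the contraction factor is then inherited from the scaled part, which is all that Theorem \ref{convergenceAMFEM1} needs.)

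Then I would absorb the difference part. Since $\sigma_\ell-\sigma_{\ell+1}$ is piecewise polynomial on $\mathcal{T}_{\ell+1}$, elementwise inverse and scaled-trace estimates give $h_K^2\|\delta(\sigma_\ell-\sigma_{\ell+1})\|_K^2\lesssim\|\sigma_\ell-\sigma_{\ell+1}\|_K^2$, $h_K^2\|\delta d(\sigma_\ell-\sigma_{\ell+1})\|_K^2\lesssim\|d(\sigma_\ell-\sigma_{\ell+1})\|_K^2$, $h_K\|\lr{\tr\star(\sigma_\ell-\sigma_{\ell+1})}\|_{\partial K}^2\lesssim\|\sigma_\ell-\sigma_{\ell+1}\|_{\omega_K}^2$, and $h_K\|\lr{\tr\star d(\sigma_\ell-\sigma_{\ell+1})}\|_{\partial K}^2\lesssim\|d(\sigma_\ell-\sigma_{\ell+1})\|_{\omega_K}^2$, the $h_K$-powers being exactly matched. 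Summing over $\mathcal{T}_{\ell+1}$ (patches overlap a bounded number of times by shape regularity) yields $\sum_K\rho_\sigma^2(K)\lesssim\Delta_{\sigma,\ell}+\Delta_{d\sigma,\ell}$ with constant depending only on $\mathcal{T}_0$ and $\theta_\sigma$; choosing $\delta$ with $\beta_\sigma:=(1+\delta)\big(1-(1-\gamma)\theta_\sigma^2\big)<1$ proves \eqref{cts2}. For \eqref{cts3} the same argument with \eqref{mark2} charges the $p_\ell-p_{\ell+1}$ terms to $\Delta_{p,\ell}$ and the terms inside $\eta_{d\sigma}$ to $\Delta_{d\sigma,\ell}$; for \eqref{cts4} the argument with \eqref{mark3} routes the residuals to $\Delta_{d\sigma,\ell}$, $\Delta_{p,\ell}$, $\Delta_{du,\ell}$ (the volume residual contributing $h_K^2\|\delta d(u_\ell-u_{\ell+1})\|_K^2\lesssim\|d(u_\ell-u_{\ell+1})\|_K^2$, and so on).

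I expect the main obstacle to be the weight bookkeeping in the last step: one must verify that in every estimator term the power of $h_K$ exactly cancels the loss from the relevant first- or second-order inverse/trace estimate, so that the $\delta d$-type (second-order) residual differences are charged to the \emph{seminorm} quantities $\|d(\cdot)\|$ rather than to the full $L^2$ differences — this is precisely what keeps $\Delta_\sigma$ out of \eqref{cts3}--\eqref{cts4}. A secondary point is the vanishing of residual and jump contributions on faces newly created inside a refined element, which is where the alignment hypothesis $f\in H^1\Lambda^k(\mathcal{T}_0)$ enters, together with the separate monotonicity argument for the data-oscillation term when $k=n$.
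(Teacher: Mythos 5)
Your argument is correct and is exactly the estimator-reduction route the paper takes: the paper simply invokes Corollary 3.4 of \cite{CKNS2008} (freeze the coarse solution on the fine mesh, contract the frozen part via the mesh-size reduction on marked elements and the D\"orfler property, absorb the differences by elementwise inverse/scaled-trace estimates into $\Delta_{\sigma,\ell}$, $\Delta_{d\sigma,\ell}$, $\Delta_{p,\ell}$, $\Delta_{du,\ell}$), while you spell that argument out, including the key enabling observation that each estimator depends only on $\sigma_h$, $d\sigma_h$, $p_h$, $du_h$. Your caveat that the unweighted oscillation term $\|f-f_{\mathcal{T}_h}\|^2$ in the $k=n$ case is merely non-increasing rather than contracting is a genuine point the paper glosses over, but it does not affect the lemma as used here, since Lemma \ref{lemma2} serves the convergence proof of \textsf{AMFEM1} for $1\le k\le n-1$.
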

\begin{proof}
We only prove \eqref{cts2} since proofs of other inequalities are the same. Recall that $\eta_{\sigma,\ell}=\eta^{2}_{\sigma}(\mathcal{T}_{\ell})$ in Theorem \ref{bdsigma} depends only on $\sigma_\ell$ and $d\sigma_\ell$. 
Using the same argument in the proof of Corollary 3.4 in \cite{CKNS2008}, for arbitrary $\delta_{*}>0$, we have
\begin{equation}\label{ctseta}\eta_{\sigma,\ell+1}\leq(1+\delta_{*})\big(\eta_{\sigma,l}-\lambda\eta^{2}_{\sigma}(\mathcal{M}_{\ell})\big)+(1+\delta_{*}^{-1})C_{\mathcal{T}_0}(\Delta_{d\sigma,l}+\Delta_{\sigma,l}),
\end{equation}
where $\lambda=1-2^{-\frac{b}{n}}<1$, $b>0$ is an integer depending on mesh refinement strategy. Then \eqref{cts2} follows from \eqref{ctseta} and the marking property \eqref{mark2} $\eta^{2}_{\sigma,\ell}(\mathcal{M}_{\ell})\geq\theta_{\sigma}^{2}\eta_{\sigma,\ell}$ with $\beta_{\sigma}=(1+\delta_{*})(1-\lambda\theta_{\sigma}^{2}).$ $\beta_{\sigma}<1$ holds provided $\delta_{*}<\frac{\lambda\theta_{\sigma}^{2}}{1-\lambda\theta_{\sigma}^{2}}$.
\end{proof}

{Following the strategy in \cite{CKNS2008}, our convergence proof of \textsf{AMFEM1} is based on reliability results in Section \ref{aposteriori}, Lemmas \ref{lemma1} and \ref{lemma2}, and quasi-errors which are weighted sums of finite element errors and estimators.}
\begin{theorem}[convergence of \textsf{AMFEM1}]\label{convergenceAMFEM1} For $f\in H^1\Lambda^{k}(\mathcal{T}_0)$ with $1\leq k\leq n-1$, let $\{(\sigma_\ell,u_\ell,p_\ell),\mathcal{T}_\ell\}_{\ell\geq0}$ be a sequence of finite element solutions and meshes generated by Algorithm \textsf{AMFEM1}. Then there exist $\zeta, \rho_{\sigma}, \rho_p, \rho_{du}, C_{0}>0$, and $\gamma\in(0,1)$, depending only on $\Omega$, $\mathcal{T}_{0}$ and $\theta_{\sigma}, \theta_p, \theta_{du}$, such that
 \begin{equation*}
     \begin{aligned}
     &\|\sigma-\sigma_\ell\|^{2}+\zeta\|d(\sigma-\sigma_h)\|^2+\|p-p_\ell\|^{2}+\|d(u-u_\ell)\|^{2}\\
     &\quad+\rho_{\sigma}\eta^{2}_{\sigma}(\mathcal{T}_\ell)+\rho_p\eta^{2}_{p}(\mathcal{T}_\ell)+\rho_{du}\eta^{2}_{du}(\mathcal{T}_\ell)\leq C_{0}\gamma^\ell.
     \end{aligned}
 \end{equation*}
\end{theorem}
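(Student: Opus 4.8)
The plan is to follow the contraction‑of‑quasi‑error strategy of \cite{CKNS2008}, exploiting the \emph{triangular} coupling of Lemmas \ref{lemma1} and \ref{lemma2}: the triple $e_{\sigma,\ell}, e_{d\sigma,\ell}, \eta_{\sigma,\ell}$ forms a self‑contained subsystem (the relations \eqref{ortho1}, \eqref{ortho2}, \eqref{cts2} and the reliability of Theorem \ref{bdsigma} involve no $p$‑ or $u$‑quantity), the harmonic error $e_{p,\ell}$ is then driven only by $\sigma$‑quantities through \eqref{ortho3}, \eqref{cts3}, and $e_{du,\ell}$ is driven by both through \eqref{ortho4}, \eqref{cts4}. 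Accordingly I would introduce the component quasi‑errors
\[
Q^\sigma_\ell:=e_{\sigma,\ell}+\zeta e_{d\sigma,\ell}+\rho_\sigma\eta_{\sigma,\ell},\qquad Q^p_\ell:=e_{p,\ell}+\rho_p\eta_{p,\ell},\qquad Q^{du}_\ell:=e_{du,\ell}+\rho_{du}\eta_{du,\ell},
\]
and prove, in this order, $Q^\sigma_{\ell+1}\le\gamma_\sigma Q^\sigma_\ell$, then $Q^p_{\ell+1}\le\gamma_p Q^p_\ell+C_1Q^\sigma_\ell$, then $Q^{du}_{\ell+1}\le\gamma_{du}Q^{du}_\ell+C_2(Q^\sigma_\ell+Q^p_\ell)$, and finally assemble a master quasi‑error that contracts.

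For the $\sigma$‑subsystem, adding \eqref{ortho1}, \eqref{ortho2}, and $\rho_\sigma$ times \eqref{cts2}, the increments $\Delta_{\sigma,\ell}$ and $\Delta_{d\sigma,\ell}$ appear with coefficients $\rho_\sigma C_\sigma-1$ and $\rho_\sigma C_\sigma+\frac{\varepsilon_1^{-1}}{1-\varepsilon_1}C^\sigma_{\text{qo}}-\zeta$; choosing $\rho_\sigma\le 1/C_\sigma$ and $\zeta$ large makes both nonpositive, so they may be dropped, leaving $Q^\sigma_{\ell+1}\le\frac{1}{1-\varepsilon_1}e_{\sigma,\ell}+\zeta e_{d\sigma,\ell}+\rho_\sigma\beta_\sigma\eta_{\sigma,\ell}$. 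The reliability $e_{\sigma,\ell}+e_{d\sigma,\ell}\le C_{\up}\eta_{\sigma,\ell}$ of Theorem \ref{bdsigma} then lets me move a controlled fraction of $e_{\sigma,\ell}$ and of $\zeta e_{d\sigma,\ell}$ into the estimator term, reducing the contraction $Q^\sigma_{\ell+1}\le\gamma_\sigma Q^\sigma_\ell$ to the single inequality $\big(\frac{1}{1-\varepsilon_1}-\gamma_\sigma\big)C_{\up}+(1-\gamma_\sigma)\zeta C_{\up}\le(\gamma_\sigma-\beta_\sigma)\rho_\sigma$; since $\beta_\sigma<1$ is fixed once $\delta_*$ in \eqref{ctseta} is fixed, this holds by taking $\gamma_\sigma$ close enough to $1$ and $\varepsilon_1$ small enough (both tied to $\zeta$).

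For $Q^p$ I would add \eqref{ortho3} and $\rho_p$ times \eqref{cts3}, replace $e_{d\sigma,\ell+1}$ by $e_{d\sigma,\ell}$ via \eqref{ortho1}, absorb $\Delta_{p,\ell}$ by $\rho_p\le(1-\varepsilon_2)/C_p$, bound the residual $e_{d\sigma,\ell}$ and $\Delta_{d\sigma,\ell}\le e_{d\sigma,\ell}$ by $C_{\up}\eta_{\sigma,\ell}\le\rho_\sigma^{-1}Q^\sigma_\ell$, and use the reliability $e_{p,\ell}\lesssim\eta_{p,\ell}$ of Theorem \ref{bdp} with the same fraction‑moving trick to obtain $Q^p_{\ell+1}\le\gamma_p Q^p_\ell+C_1Q^\sigma_\ell$. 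Analogously, substituting \eqref{ortho1} and \eqref{ortho3} into \eqref{ortho4}, combining with $\rho_{du}$ times \eqref{cts4}, taking $\rho_{du}$ small enough to absorb $\Delta_{du,\ell}$ and $\Delta_{p,\ell}$, bounding the residual $e_{d\sigma,\ell}, e_{p,\ell}$ by $Q^\sigma_\ell, Q^p_\ell$, and invoking Theorem \ref{bddu}, gives $Q^{du}_{\ell+1}\le\gamma_{du}Q^{du}_\ell+C_2(Q^\sigma_\ell+Q^p_\ell)$. Finally, with $\mathcal{Q}_\ell:=Q^{du}_\ell+NQ^p_\ell+PQ^\sigma_\ell$ and $N,P$ chosen successively large so that $C_1,C_2$ are dominated, a one‑line computation gives $\mathcal{Q}_{\ell+1}\le\gamma\,\mathcal{Q}_\ell$ for any fixed $\gamma\in(\max\{\gamma_\sigma,\gamma_p,\gamma_{du}\},1)$; iterating, and noting that the theorem's left‑hand side equals $Q^\sigma_\ell+Q^p_\ell+Q^{du}_\ell\le\max\{1,N^{-1},P^{-1}\}\,\mathcal{Q}_\ell$, yields the claim with $C_0=\max\{1,N^{-1},P^{-1}\}\,\mathcal{Q}_0$ and the same $\gamma$.

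The hard part will be the bookkeeping in the $\sigma$‑subsystem, where the constants work against one another: the amplification factor $\frac{1}{1-\varepsilon_1}>1$ in \eqref{ortho2} forbids a plain orthogonality argument, forcing $\varepsilon_1$ small; but then $\frac{\varepsilon_1^{-1}}{1-\varepsilon_1}C^\sigma_{\text{qo}}$ forces $\zeta$ large, and a large $\zeta$ inflates the residual $(1-\gamma_\sigma)\zeta C_{\up}$ unless $\gamma_\sigma\to1$ at a rate tied to $\zeta$ (for instance $1-\gamma_\sigma\sim\varepsilon_1^2$ with $\zeta\sim\varepsilon_1^{-1}$). The order of choices must therefore be: fix the mesh and domain constants and $\delta_*$ (hence $\beta_\sigma,\beta_p,\beta_{du}$); then $\rho_\sigma,\rho_p,\rho_{du}$ small; then $\varepsilon_1$ small; then $\zeta$ large; then $\gamma_\sigma,\gamma_p,\gamma_{du}$; then $N,P$; and lastly $\gamma$ — checking at each stage that the finitely many inequalities above remain simultaneously satisfiable. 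Everything else (the element‑wise manipulations behind \eqref{cts2}--\eqref{cts4} and the reliability bounds of Section \ref{aposteriori}) is already in hand.
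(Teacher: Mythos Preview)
Your proposal is correct and follows the same overall strategy as the paper: exploit the triangular coupling of Lemmas~\ref{lemma1}--\ref{lemma2} and the reliability bounds of Section~\ref{aposteriori} in the order $\sigma\to p\to du$, contracting suitable quasi-errors in the style of \cite{CKNS2008}. Two implementation details differ. First, for the $\sigma$-subsystem the paper proceeds in two stages: it first proves $E_{\ell+1}\le\alpha_1 E_\ell+C_{\varepsilon_1}\Delta_{d\sigma,\ell}$ with $E_\ell=e_{d\sigma,\ell}+e_{\sigma,\ell}+\rho_\sigma\eta_{\sigma,\ell}$ and an explicitly computed $\alpha_1<1$, and only then absorbs the leftover $\Delta_{d\sigma,\ell}$ by adding $C_{\varepsilon_1}e_{d\sigma,\ell}$ to both sides and using $e_{d\sigma,\ell}\le E_\ell$; this sidesteps the delicate simultaneous scaling $1-\gamma_\sigma\sim\varepsilon_1^2$, $\zeta\sim\varepsilon_1^{-1}$ that your one-shot contraction requires. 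Second, for the cascade to $p$ and $du$ the paper does \emph{not} form a master quasi-error: instead it bounds the source terms by the already-established geometric decay of $\eta_{\sigma,\ell}$ (and then $\eta_{p,\ell}$), solves each recursion $a_{\ell+1}\le\tilde\gamma a_\ell+C\tilde\gamma^\ell$ by induction, and absorbs the resulting factor $\ell\tilde\gamma^\ell$ by passing to $\gamma=\sqrt{\tilde\gamma}$. Your weighted combination $\mathcal{Q}_\ell=Q^{du}_\ell+NQ^p_\ell+PQ^\sigma_\ell$ is an equally valid and arguably cleaner way to handle the cascade, yielding a single contracting quantity rather than three separate decay estimates; the paper's route trades that elegance for more transparent constants at each stage.
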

\begin{proof}
For convenience, we may use $C$ as a generic constant,   depending only on $\Omega$, $\mathcal{T}_{0}$ and possibly $\theta_{\sigma}, \theta_p, \theta_{du}$. Let $\rho_\sigma=1/C_{\sigma}$, and $E_\ell=e_{d\sigma,\ell}+e_{\sigma,\ell}+\rho_\sigma\eta_{\sigma,\ell}$. By \eqref{ortho1}, \eqref{ortho2} and \eqref{cts2}, 
\begin{equation*}
\begin{aligned}
E_{\ell+1}&\leq e_{d\sigma,\ell}-\Delta_{d\sigma,\ell}+\frac{1}{1-\varepsilon_1}e_{\sigma,\ell}+\rho_\sigma\beta_{\sigma}\eta_{\sigma,\ell}+\left(1+\frac{\varepsilon_1^{-1}}{1-\varepsilon_1}C_{\text{qo}}^\sigma\right)\Delta_{d\sigma,\ell}\\
        &\leq\frac{1}{1-\varepsilon_1}(e_{d\sigma,\ell}+e_{\sigma,\ell})+\rho_\sigma\beta_{\sigma}\eta_{\sigma,\ell}+C_{\varepsilon_1}\Delta_{d\sigma,\ell},
\end{aligned}
\end{equation*}
where $C_{\varepsilon_1}=C_{\text{qo}}^\sigma\varepsilon_1^{-1}/(1-\varepsilon_1)>1$. Let $\alpha_1\in(0,1)$ be an undetermined constant. Then by Theorem \ref{bdsigma}, 
\begin{equation}\label{reduction}
\begin{aligned}
E_{\ell+1}&\leq\alpha_1(e_{d\sigma,l}+e_{\sigma,\ell})\\
&+\left\{\left(\frac{1}{1-\varepsilon_1}-\alpha_1\right)C_{\up}+\rho_\sigma\beta_{\sigma}\right\}\eta_{\sigma,\ell}+C_{\varepsilon_1}\Delta_{d\sigma,\ell}.
\end{aligned}
\end{equation}
Solving the equation 
$$\left(\frac{1}{1-\varepsilon_1}-\alpha_1\right)C_{\up}+\rho_\sigma\beta_{\sigma}=\alpha_1\rho_\sigma$$
for $\alpha_1$, we obtain
\begin{equation*}
    \alpha_1=\frac{\frac{C_{\up}}{1-\varepsilon_1}+\rho_\sigma\beta_{\sigma}}{C_{\up}+\rho_\sigma}.
\end{equation*}
$\alpha_1<1$ provided \begin{equation*}
0<\varepsilon_1<\frac{1-\beta_{\sigma}}{C_{\sigma}C_{\up}+1-\beta_{\sigma}}
 \end{equation*}
 holds. In this case, \eqref{reduction} reduces to 
\begin{equation}\label{reduction2}
E_{\ell+1}\leq\alpha_1 E_{\ell}+C_{\varepsilon_1}\Delta_{d\sigma,\ell}.
\end{equation}
Let $\alpha_2\in(0,1)$ be a constant to be determined. Combining \eqref{reduction2}, \eqref{ortho1}, and $e_{d\sigma,\ell}\leq E_{\ell}$, we have
\begin{equation*}
\begin{aligned}
E_{\ell+1}+C_{\varepsilon_1}e_{d\sigma,\ell+1}&\leq\alpha_1 E_{\ell}+C_{\varepsilon_1}e_{d\sigma,\ell}\\
&\leq(\alpha_1+C_{\varepsilon_1}\alpha_2)E_{\ell}+C_{\varepsilon_1}(1-\alpha_2)e_{d\sigma,\ell}\\
&=\gamma_1\left\{E_{\ell}+\frac{C_{\varepsilon_1}(1-\alpha_2)}{\alpha_1+C_{\varepsilon_1}\alpha_2}e_{d\sigma,\ell}\right\},
\end{aligned}
\end{equation*}
where $\gamma_1=\alpha_1+C_{\varepsilon_1}\alpha_2$. We require that 
$$\gamma_1<1,\quad\frac{1-\alpha_2}{\alpha_1+C_{\varepsilon_1}\alpha_2}\leq1,$$
which is equivalent to 
$$0<\frac{1-\alpha_1}{1+C_{\varepsilon_1}}\leq\alpha_2<\frac{1-\alpha_1}{C_{\varepsilon_1}}<1.$$
By taking any $\alpha_2$ satisfying the above criterion, we proved  
\begin{equation}\label{reductionsigma}
\begin{aligned}
E_{\ell+1}+C_{\varepsilon_1}e_{d\sigma,\ell+1}\leq\gamma_1(E_{\ell}+C_{\varepsilon_1}e_{d\sigma,\ell})\leq C\gamma_1^{\ell+1},
\end{aligned}
\end{equation}
which gives contraction on $e_{\sigma,\ell}, e_{d\sigma,\ell}$, and $\eta_{\sigma,\ell}$.

Similarly, using \eqref{ortho3}, \eqref{cts3}, and Theorems \ref{bdp}, 
we have
\begin{equation}\label{con1}
\begin{aligned}
e_{p,\ell+1}+\rho_p\eta_{p,\ell+1}&\leq\alpha_{3}(e_{p,\ell}+\rho_p\eta_{p,\ell})+\rho_p C_p\Delta_{d\sigma,\ell}+\varepsilon_2^{-1}e_{d\sigma,\ell+1}\\
&\leq\alpha_3(e_{p,\ell}+\rho_p\eta_{p,\ell})+C(\eta_{\sigma,\ell}+\eta_{\sigma,\ell+1}),
\end{aligned}
\end{equation}
where
$\rho_p=(1-\varepsilon_2)/C_p$, $\alpha_{3}=\frac{C^p_{\up}+\rho_p\beta_p}{C^p_{\up}+\rho_p}<1$, and
$C^p_{\up}$ is the multiplicative constant in Theorem \ref{bdp}. Let $\tilde{\gamma}_{2}=\max(\alpha_3,\gamma_1)$ It then follows from \eqref{con1} and \eqref{reductionsigma} that
\begin{equation*}
\begin{aligned}
e_{p,\ell+1}+\rho_p\eta_{p,\ell+1}\leq\tilde{\gamma}_2(e_{p,\ell}+\rho_p\eta_{p,\ell})+C\tilde{\gamma}_2^\ell.
\end{aligned}
\end{equation*}
By induction and taking $\gamma_2=\sqrt{\tilde{\gamma}_2}$, we obtain
\begin{equation}\label{reductionp}
\begin{aligned}
e_{p,\ell}+\rho_p\eta_{p,\ell}&\leq\tilde{\gamma}_2^\ell(e_{p,0}+\rho_p\eta_{p,0})+C\ell\tilde{\gamma}_2^\ell\leq C\gamma_2^\ell.
\end{aligned}
\end{equation}

In the end, it follows from \eqref{ortho4}, \eqref{cts4}, Theorems \ref{bdsigma} and \ref{bdp} that
\begin{equation}\label{con2}
\begin{aligned}
e_{du,\ell+1}+\rho_{du}\eta_{du,\ell+1}&\leq\alpha_{4}(e_{du,\ell}+\rho_{du}\eta_{du,\ell})+\rho_{du}C_{du}(\Delta_{d\sigma,\ell}+\Delta_{p,\ell})\\
&\qquad+2\varepsilon_3^{-1}C^{du}_{\text{qo}}(e_{d\sigma,\ell+1}+e_{p,\ell+1}),\\
&\leq\alpha_{4}(e_{du,\ell}+\rho_{du}\eta_{du,\ell})\\
&\qquad+C(\eta_{\sigma,\ell}+\eta_{p,\ell}+\eta_{\sigma,\ell+1}+\eta_{p,\ell+1}),
\end{aligned}
\end{equation}
where $\rho_{du}=(1-\varepsilon_3)/C_{du}$, $\alpha_{4}=\frac{C^{du}_{\up}+\rho_{du}\beta_{du}}{C^{du}_{\up}+\rho_{du}}<1$, and $C^{du}_{\up}$ is the multiplicative constant in Theorem \ref{bddu}. Then using \eqref{con2}, \eqref{reductionsigma}, \eqref{reductionp}, and induction, we have  
\begin{equation}\label{reductiondu}
e_{du,\ell}+\rho_{du}\eta_{du,\ell}\leq C\gamma_{3}^{\ell},
\end{equation}
for $\gamma_{3}:=\sqrt{\max(\alpha_4,\gamma_1,\gamma_2)}$.
Combining \eqref{reductionsigma}, \eqref{reductionp}, and \eqref{reductiondu}, the proof is complete by taking $\gamma=\max(\gamma_1,\gamma_2,\gamma_3)$ and $\zeta=1+C_{\varepsilon_1}$.
\end{proof}

{In the proof of Theorem \ref{convergenceAMFEM1}, the contraction \eqref{reductionsigma} of $\|\sigma-\sigma_{\ell}\|_{H\Lambda^{k-1}}$ is guaranteed by the D\"orfler marking \eqref{mark1} and utilizing the quasi-error 
$\|\sigma-\sigma_h\|^2+\rho_\sigma\eta_{\sigma}^2(\Th)+(1+C_{\varepsilon_1})\|d(\sigma-\sigma_h)\|^2$ motivated by the orthogonality \eqref{ortho1} on $d\sigma$ and partial orthogonality \eqref{ortho2} on $\sigma$. Technically speaking, the negative term $-\Delta _{d\sigma,\ell}$ in \eqref{ortho1} is used to balance
the term involving $\Delta _{d\sigma,\ell}$ in \eqref{ortho2}. For $p$ and $du$, \eqref{ortho3} and \eqref{ortho4} seem not strong enough to produce contraction using a single marking step. Hence convergence of $\|p-p_{\ell}\|$ and $\|d(u-u_{\ell})\|$ is guaranteed by separate markings \eqref{mark1} and \eqref{mark2}}. The marking procedure at \textbf{Step 3} can be adjusted according to practical purpose. 

\begin{remark}\label{remarkAMFEM1}
If $\mathfrak{H}^{k}=\{0\}$, marking \eqref{mark2} disappears. In this case, the contraction in Theorem \ref{convergenceAMFEM1} reduces to
\begin{equation*}
\begin{aligned}
&\|\sigma-\sigma_{\ell}\|^2+\zeta\|d(\sigma-\sigma_\ell)\|^2+\|d(u-u_{\ell})\|^2\\
&\qquad+\rho_\sigma\eta^{2}_{\sigma}(\mathcal{T}_{\ell})+\rho_{du}\eta^{2}_{du}(\mathcal{T}_{\ell})\leq C_{0}\gamma^{\ell},
\end{aligned}
\end{equation*}
which is different from the contraction given in Theorem 20 in \cite{CW2017}. In particular, it guarantees $\|\sigma-\sigma_{\ell}\|_{H\Lambda^{k-1}}\rightarrow0$ while the AMFEM in \cite{CW2017} provides $\|d(\sigma-\sigma_{\ell})\|\rightarrow0$.
\end{remark}

\begin{remark}\label{kn2}
Consider the case $k=n$, \eqref{DHL} is just the traditional mixed method for solving the mixed formulation of Poisson's equation \eqref{compactHL} in $\mathbb{R}^{n}$ with $d^{n-1}=\emph{div}, \delta^n=-\nabla, \mathfrak{H}^n=\{0\}$ and $du=0, p=0$. In this case, markings \eqref{mark2} and \eqref{mark3} disappear, and thus \textsf{AMFEM1} conincides with \textsf{AMFEM2} given below. To prove the contraction and quasi-optimality of their AMFEM on a simply connected polygon in $\mathbb{R}^{2}$, the authors in \cite{CHX2009} constructed a technical quasi-orthogonality result
\begin{equation}\label{CHX}
(1-\varepsilon)e_{\sigma,\ell+1}\leq e_{\sigma,\ell}-\Delta_{\sigma,\ell}+\frac{C_{\mathcal{T}_0}}{\varepsilon}\osc_\ell,
\end{equation}
for any $0<\varepsilon<1$, $\osc_\ell:=\|h_{\ell}(f-f_{\mathcal{T}_{\ell}})\|^2$, where $h_{\ell}$ is the meshsize function with $h_{\ell}|_{K}=|K|^{\frac{1}{2}}$ for $K\in\mathcal{T}_{\ell}$. {The authors in \cite{HX2011,HuYu2018} further extended the analysis in \cite{CHX2009} to develop quasi-optimal AMFEMs for Poisson's equation in $\mathbb{R}^3.$} 

Comparing to \eqref{ortho2}, the quasi-orthogonality \eqref{CHX} is sharper because $\osc_\ell\ll\Delta_{d\sigma,\ell}=\|f_{\mathcal{T}_{\ell}}-f_{\mathcal{T}_{\ell+1}}\|^2.$ {However, for convergence analysis of \textsf{AMFEM1}, the elementary result \eqref{ortho2} is enough. It should be noted that the sharper quasi-orthogonality \eqref{CHX} yields better convergence and complexity results. For example, when using the Brezzi--Douglas--Marini element $\mathcal{P}_{r+1}\Lambda^{n-1}(\mathcal{T}_\ell)\times\mathcal{P}_r\Lambda^n(\mathcal{T}_\ell)$, the $H(\emph{div})$ error $\|\sigma-\sigma_\ell\|_{H\Lambda^{n-1}}=O(h^{r+1})$ is a lower order error comparing to the $L^2$ error  $\|\sigma-\sigma_\ell\|=O(h^{r+2})$. In this case, the AMFEMs in \cite{CHX2009,HX2011,HuYu2018} for controlling $\|\sigma-\sigma_\ell\|$ have better convergence rate than the convergence rate of $\|\sigma-\sigma_\ell\|_{H\Lambda^{n-1}}$ given in Theorem \ref{opt} provided $(\sigma,f)$ belong to suitable approximation class.}
\end{remark}

\section{Optimality}\label{optimality}
\begin{algorithm}\label{AMFEM2}
\textsf{AMFEM2}. Given an initial mesh $\mathcal{T}_{0}$, marking parameters $0<\theta<1$, and an error tolerance $\tol>0$. Set $\ell=0$.
\begin{itemize}
\item[]\textbf{Step 1} Solve \eqref{DHL} on $\mathcal{T}_{\ell}$ to obtain the finite element solution $(\sigma_{\ell},u_{\ell},p_{\ell})$. 
\item[]\textbf{Step 2}  Compute the error indicator $\eta_{\sigma}(K)$ on each element $K\in\mathcal{T}_{\ell}$ and $\eta_{\ell}=\eta_{\sigma}(\mathcal{T}_{\ell})$. If $\eta_\ell\leq\tol$, return $(\sigma_\ell,u_\ell,p_\ell)$ and $\mathcal{T}_\ell$; else go to \textbf{Step 3}.
\item[]\textbf{Step 3} Select a subset $\mathcal{M}_{\ell}$ of $\mathcal{T}_{\ell}$ such that 
\begin{align*}
\eta_{\sigma}(\mathcal{M}_{\ell})\geq\theta\eta_{\sigma}(\mathcal{T}_{\ell}).
\end{align*}
\item[]\textbf{Step 4} Refine each element in $\mathcal{M}_{\ell}$ and necessary neighboring elements by a mesh refinement algorithm preserving shape regularity to get a conforming mesh $\mathcal{T}_{\ell+1}$. Set $\ell=\ell+1$ and go to \textbf{Step 1}.
\end{itemize}
\end{algorithm}

For the purpose of proving optimality, it seems questionable to impose separate markings \eqref{mark2} and \eqref{mark3} in \textsf{AMFEM1}. See \cite{CKNS2008} for the drawback of separate marking for data oscillation. Hence we propose \textsf{AMFEM2} with a single marking step. As a result, we are not able to control $\|p-p_\ell\|$ and $\|d(u-u_\ell)\|$. However, quasi-optimality of \textsf{AMFEM2} will follow as a compensation. \textsf{AMFEM2} is a contraction by \eqref{reductionsigma} in the proof of Theorem \ref{convergenceAMFEM1}. 
\begin{theorem}[contraction of \textsf{AMFEM2}]\label{contraction} For  $f\in H^1\Lambda^{k}(\mathcal{T}_0)$ with $1\leq k\leq n-1$ or $f\in L^2\Lambda^n(\Omega)$, let $\{(\sigma_{\ell},u_{\ell},p_{\ell}),\mathcal{T}_{\ell}\}_{\ell\geq0}$ be a sequence of finite element solutions and meshes generated by Algorithm \textsf{AMFEM2}. 
Then there exist $\zeta$, $\rho>0$ and $\alpha\in(0,1)$, depending only on $\Omega$, $\mathcal{T}_{0}$ and $\theta$, such that
 \begin{equation*}
 \begin{aligned}
&\|\sigma-\sigma_{\ell+1}\|_{H\Lambda^{k-1}}^{2}+\zeta\|d(\sigma-\sigma_{\ell+1})\|^2+\rho\eta^{2}_{\sigma}(\mathcal{T}_{\ell+1})\\
&\quad\leq\alpha\left\{\|\sigma-\sigma_{\ell}\|_{H\Lambda^{k-1}}^{2}+\zeta\|d(\sigma-\sigma_{\ell})\|^2+\rho\eta^{2}_{\sigma}(\mathcal{T}_\ell)\right\}.
 \end{aligned}
 \end{equation*}
\end{theorem}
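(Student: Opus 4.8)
The plan is to recognize that Theorem~\ref{contraction} is, up to a renaming of constants, exactly the intermediate contraction \eqref{reductionsigma} established inside the proof of Theorem~\ref{convergenceAMFEM1}; so the real task is to check that every ingredient leading to \eqref{reductionsigma} is available to \textsf{AMFEM2}, i.e.\ that the auxiliary markings \eqref{mark2}--\eqref{mark3} were never used. First I would observe that the indicator $\eta_\sigma(K)$ of Theorem~\ref{bdsigma} depends only on $\sigma_\ell$ and $d\sigma_\ell$ (and, when $k=n$, on $f-f_{\mathcal{T}_\ell}$), so the estimator reduction \eqref{cts2} of Lemma~\ref{lemma2} and the error reductions \eqref{ortho1}, \eqref{ortho2} of Lemma~\ref{lemma1} hold verbatim for the sequence produced by \textsf{AMFEM2}: the only marking that enters their derivation is the D\"orfler property $\eta_\sigma(\mathcal{M}_\ell)\geq\theta\,\eta_\sigma(\mathcal{T}_\ell)$, which is precisely Step~3 of \textsf{AMFEM2}. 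Likewise the reliability bound $\|\sigma-\sigma_\ell\|_{H\Lambda^{k-1}}^2 = e_{\sigma,\ell}+e_{d\sigma,\ell}\leq C_{\up}\eta_{\sigma,\ell}$ from Theorem~\ref{bdsigma} is unconditional.

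With these facts in hand I would simply replay the computation from \eqref{reduction} to \eqref{reductionsigma}. Setting $\rho:=1/C_\sigma$ and $E_\ell:=e_{d\sigma,\ell}+e_{\sigma,\ell}+\rho\,\eta_{\sigma,\ell}$, combining \eqref{ortho1}, \eqref{ortho2}, \eqref{cts2} and then invoking Theorem~\ref{bdsigma} gives, for suitably small $\varepsilon_1$ and the associated $\alpha_1\in(0,1)$, the reduction $E_{\ell+1}\leq\alpha_1 E_\ell + C_{\varepsilon_1}\Delta_{d\sigma,\ell}$; adding $C_{\varepsilon_1}e_{d\sigma,\ell+1}$ to both sides, rewriting $\Delta_{d\sigma,\ell}=e_{d\sigma,\ell}-e_{d\sigma,\ell+1}$ by \eqref{ortho1}, and choosing $\alpha_2$ in the admissible range yields
\[
E_{\ell+1}+C_{\varepsilon_1}e_{d\sigma,\ell+1}\leq\gamma_1\bigl(E_\ell+C_{\varepsilon_1}e_{d\sigma,\ell}\bigr),\qquad \gamma_1\in(0,1).
\]
Since $E_\ell+C_{\varepsilon_1}e_{d\sigma,\ell}=e_{\sigma,\ell}+(1+C_{\varepsilon_1})e_{d\sigma,\ell}+\rho\,\eta_{\sigma,\ell}=\|\sigma-\sigma_\ell\|_{H\Lambda^{k-1}}^2+C_{\varepsilon_1}\|d(\sigma-\sigma_\ell)\|^2+\rho\,\eta_\sigma^2(\mathcal{T}_\ell)$, the theorem follows with $\zeta:=C_{\varepsilon_1}$, $\rho:=1/C_\sigma$, and $\alpha:=\gamma_1$, all of which depend only on $\Omega$, $\mathcal{T}_0$, and $\theta$.

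The endpoint index $k=n$ I would handle in a parenthetical remark: there $d\sigma=f$ and $d\sigma_\ell=f_{\mathcal{T}_\ell}$ is the $L^2$-projection of $f$ onto the nested discrete spaces, so the Pythagorean identity plays the role of \eqref{ortho1} with $e_{d\sigma,\ell}:=\|f-f_{\mathcal{T}_\ell}\|^2$, while Theorem~\ref{bdsigma} in the case $k=n$ directly bounds $\|\sigma-\sigma_\ell\|_{H\Lambda^{n-1}}^2$ by $C_{\up}\eta_\sigma^2(\mathcal{T}_\ell)$ and $e_{d\sigma,\ell}\leq\eta_{\sigma,\ell}$; the same bookkeeping then goes through unchanged. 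The only genuine point requiring care — and the reason the statement deserves to be recorded separately — is the verification that \eqref{reductionsigma} never invoked \eqref{mark2}, \eqref{mark3}, or any smallness assumption on $\mathcal{T}_0$, nor any property of $p_\ell$ or $u_\ell$; once that is confirmed, \textsf{AMFEM2}, despite losing control of $\|p-p_\ell\|$ and $\|d(u-u_\ell)\|$, retains the quasi-error contraction needed for the subsequent optimality analysis.
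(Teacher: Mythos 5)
Your proposal is correct and follows essentially the same route as the paper, which proves Theorem~\ref{contraction} precisely by observing that the intermediate contraction \eqref{reductionsigma} in the proof of Theorem~\ref{convergenceAMFEM1} uses only the single D\"orfler marking \eqref{mark1} together with \eqref{ortho1}, \eqref{ortho2}, \eqref{cts2}, and Theorem~\ref{bdsigma}, yielding $\zeta=C_{\varepsilon_1}$, $\rho=1/C_\sigma$, $\alpha=\gamma_1$. Your explicit check that \eqref{mark2}--\eqref{mark3} never enter, and your separate bookkeeping for $k=n$ via the Pythagorean identity for the nested $L^2$ projections $f_{\mathcal{T}_\ell}$, are exactly the points the paper leaves implicit.
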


To prove the quasi-optimality of \textsf{AMFEM2}, we need a localized upper bound, which can be viewed as a discrete and local version of the global upper bound $\eta_{\sigma}(\Th)$ for  $\|\sigma-\sigma_{h}\|_{H\Lambda^{k-1}}$. Following \cite{Demlow2017}, we apply the locally defined $V$-bounded cochain projection $\tilde{\pi}_{h}$ given by Falk and Winther \cite{FW2014}. Let $\mathcal{T}_{H}$ be a refinement of $\Th$ and $\mathcal{R}_{H}=\mathcal{R}_{\mathcal{T}_{H}\rightarrow\mathcal{T}_{h}}$ be the set of refined elements in $\mathcal{T}_{H}$.  Given $\tilde{\pi}_{H}$ on $\mathcal{T}_{H}$ and $K\in\mathcal{T}_{H}$, there is a subdomain $D_{K}$  which contains $K$ and aligns with $\mathcal{T}_{H}$, such that $\#\{K^{\prime}\in\mathcal{T}_{H}: K^{\prime}\subset D_{K}\}\lesssim1$. In addition, $\tilde{\pi}_{H}$ is a local projection (see also (2.9) in \cite{Demlow2017}) in the sense that  
\begin{equation}\label{local}
v|_{D_{K}}\in V_{H}|_{D_{K}}\Rightarrow(v-\tilde{\pi}_{H}v)|_{K}=0.
\end{equation} 

To specify the dependence of $\eta_{\sigma}(K)$ on the mesh and finite element solution, we use the notation $\eta_{\sigma,\Th}(\sigma_{h},K)=\eta_{\sigma}(K)$ and
$\eta^2_{\sigma,\Th}(\sigma_{h},\mathcal{M})=\sum_{K\in\mathcal{M}}\eta^2_{\sigma,\Th}(\sigma_{h},K)$ for $\mathcal{M}\subset\mathcal{T}_{h}$ throughout the rest of this section.
\begin{theorem}[localized upper bound]\label{disupper}
Let $\Th$ be a conforming refinement of $\mathcal{T}_{H}$, $\sigma_{h}$ and $\sigma_{H}$ be the finite element solution approximating $\sigma$ on $\Th$ and $\mathcal{T}_{H}$, respectively. For $f\in H^1\Lambda^{k}(\mathcal{T}_H)$ with $1\leq k\leq n-1$ or $f\in L^2\Lambda^n(\Omega)$, there exists $\widetilde{\mathcal{R}}_{H}\subset\mathcal{T}_{H}$, which is the union of $\mathcal{R}_{H}$ and a collection of some neighboring elements of $\mathcal{R}_{H}$, $\#\widetilde{\mathcal{R}}_{H}\lesssim\#\mathcal{R}_{H}$, such that 
\begin{equation*}
\begin{aligned}
\|\sigma_{h}-\sigma_{H}\|_{H\Lambda^{k-1}}^{2}\leq C_{\loc}\eta^{2}_{\sigma,\mathcal{T}_{H}}(\sigma_{H},\widetilde{\mathcal{R}}_{H}),
\end{aligned}
\end{equation*}
where $C_{\loc}$ depends solely on $\Omega$ and the shape regularity of $\Th, \mathcal{T}_{H}$.
\end{theorem}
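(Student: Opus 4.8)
The strategy is to run the proof of Theorem~\ref{bdsigma} again, with two changes: the continuous Hodge decomposition \eqref{Hodge2} is replaced by the discrete one \eqref{disHodge} in $V_{h}$, and the global commuting quasi-interpolation $\Pi_{h}$ is replaced by the \emph{local} bounded cochain projection $\tilde{\pi}_{H}$ of \cite{FW2014} together with the Scott--Zhang interpolation $I_{H}$, so that every form that gets integrated by parts is supported in a bounded-overlap layer of $\mathcal{T}_{H}$-elements around $\mathcal{R}_{H}$; that layer is what we call $\widetilde{\mathcal{R}}_{H}$, and since each patch $D_{K}$ contains $O(1)$ elements with bounded overlap, all the finitely many enlargements below still keep $\#\widetilde{\mathcal{R}}_{H}\lesssim\#\mathcal{R}_{H}$. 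First decompose $\sigma_{h}-\sigma_{H}=dv_{1}+\delta_{h}v_{2}+q_{h}$ via \eqref{disHodge}, with $v_{1}\in\mathfrak{Z}_{h}^{\perp}$, $v_{2}\in\mathfrak{B}_{h}$ (so $v_{2}=dw_{2}$ with $w_{2}\in\mathfrak{Z}_{h}^{\perp}$), and $q_{h}\in\mathfrak{H}_{h}^{k-1}$, so that $\|\sigma_{h}-\sigma_{H}\|_{H\Lambda^{k-1}}^{2}=\|dv_{1}\|^{2}+\|\delta_{h}v_{2}\|^{2}+\|q_{h}\|^{2}+\|d(\sigma_{h}-\sigma_{H})\|^{2}$; each piece is handled separately. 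For the harmonic piece, $\sigma_{h}\perp\mathfrak{Z}_{h}^{k-1}\supseteq\mathfrak{H}_{h}^{k-1}$ and $\mathfrak{H}_{H}^{k-1}\subseteq\mathfrak{Z}_{H}^{k-1}$, so with $\bar q=q_{h}/\|q_{h}\|$ we get $\|q_{h}\|=\ab{\sigma_{h}-\sigma_{H}}{\bar q-P_{\mathfrak{H}_{H}}\bar q}\le\delta(\mathfrak{H}_{h},\mathfrak{H}_{H})\|\sigma_{h}-\sigma_{H}\|$, and $\delta(\mathfrak{H}_{h},\mathfrak{H}_{H})<1$ by \eqref{HhHH} in Lemma~\ref{gaph}, so $\|q_{h}\|^{2}$ is absorbed into the left-hand side.

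For $\|dv_{1}\|$: since $\sigma_{h}\perp\mathfrak{B}_{h}^{k-1}$ and $d\tilde{\pi}_{H}v_{1}\in\mathfrak{B}_{H}^{k-1}\subseteq\mathfrak{Z}_{H}^{k-1}\perp\sigma_{H}$, we have $\|dv_{1}\|^{2}=-\ab{\sigma_{H}}{d(v_{1}-\tilde{\pi}_{H}v_{1})}$; because $v_{1}$ restricts to an element of $V_{H}^{k-2}$ on every $D_{K}$ avoiding $\mathcal{R}_{H}$, the locality \eqref{local} of $\tilde{\pi}_{H}$ forces $v_{1}-\tilde{\pi}_{H}v_{1}$ to be supported in $\widetilde{\mathcal{R}}_{H}$. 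Now apply a regular decomposition of $v_{1}-\tilde{\pi}_{H}v_{1}$ that is itself supported near $\widetilde{\mathcal{R}}_{H}$ (the $\mathring{H}\Lambda$-version of Theorem~\ref{rdqi} on a slightly enlarged Lipschitz subdomain), write it as $d\psi+z$ with $z\in H^{1}\Lambda^{k-2}$, so $d(v_{1}-\tilde{\pi}_{H}v_{1})=dz$, and then subtract $dI_{H}z\in\mathfrak{B}_{H}^{k-1}\perp\sigma_{H}$ to obtain $\|dv_{1}\|^{2}=-\ab{\sigma_{H}}{d(z-I_{H}z)}$. Element-wise integration by parts on $\mathcal{T}_{H}$ is now legitimate (as $z-I_{H}z\in H^{1}$ and the jumps of $\star\sigma_{H}$ live only on $\mathcal{T}_{H}$-faces), and Cauchy--Schwarz, the Scott--Zhang estimates, $\|z\|_{H^{1}}\lesssim\|v_{1}-\tilde{\pi}_{H}v_{1}\|_{H\Lambda^{k-2}}\lesssim\|v_{1}\|_{H\Lambda^{k-2}}$ ($L^{2}$-boundedness of $\tilde{\pi}_{H}$), and the discrete Poincar\'e inequality $\|v_{1}\|\lesssim\|dv_{1}\|$ give $\|dv_{1}\|\lesssim(\sum_{K\in\widetilde{\mathcal{R}}_{H}}h_{K}^{2}\|\delta\sigma_{H}\|_{K}^{2}+h_{K}\|\lr{\tr\star\sigma_{H}}\|_{\partial K}^{2})^{1/2}$.

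For $\|\delta_{h}v_{2}\|$ and $\|d(\sigma_{h}-\sigma_{H})\|$: by the adjoint relation for $\delta_{h}$, $\|\delta_{h}v_{2}\|^{2}=\ab{d(\sigma_{h}-\sigma_{H})}{dw_{2}}$, and testing \eqref{DHL} on $\mathcal{T}_{h}$ with $dw_{2}\in V_{h}^{k}$ and \eqref{DHL} on $\mathcal{T}_{H}$ with $d\tilde{\pi}_{H}w_{2}\in V_{H}^{k}$ (both annihilated by $d$, both orthogonal to $p_{h}$ and $p_{H}$) collapses this to $\ab{f-d\sigma_{H}}{d(w_{2}-\tilde{\pi}_{H}w_{2})}$. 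Localizing exactly as above (locality of $\tilde{\pi}_{H}$; a locally supported regular decomposition $w_{2}-\tilde{\pi}_{H}w_{2}=d\psi_{2}+z_{2}$; subtracting $dI_{H}z_{2}$, which $f-d\sigma_{H}$ annihilates by the $\mathcal{T}_{H}$-equation), element-wise integration by parts using $f\in H^{1}\Lambda^{k}(\mathcal{T}_{H})$, together with the discrete Poincar\'e chain $\|w_{2}\|_{H\Lambda^{k-1}}\lesssim\|v_{2}\|\lesssim\|\delta_{h}v_{2}\|$, gives $\|\delta_{h}v_{2}\|\lesssim\eta_{d\sigma,\mathcal{T}_{H}}(\sigma_{H},\widetilde{\mathcal{R}}_{H})$. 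Since $d(\sigma_{h}-\sigma_{H})\in\mathfrak{B}_{h}^{k}$ equals $d\tilde w$ with $\tilde w\in\mathfrak{Z}_{h}^{\perp}$, writing $\|d(\sigma_{h}-\sigma_{H})\|^{2}=\ab{d(\sigma_{h}-\sigma_{H})}{d\tilde w}$ and repeating the identical argument bounds $\|d(\sigma_{h}-\sigma_{H})\|$ by the same quantity. Collecting the pieces and absorbing $\|q_{h}\|^{2}$ yields the claim for $2\le k\le n-1$; the case $k=1$ has no $dv_{1}$ term, and for $k=n$ one uses $d\sigma_{h}=f_{\Th}$, $d\sigma_{H}=f_{\mathcal{T}_{H}}$ and the oscillation $\|f-f_{\mathcal{T}_{H}}\|$ in place of the $\eta_{d\sigma}$-terms, the $d$-part being $\|f_{\Th}-f_{\mathcal{T}_{H}}\|$, which is supported on $\mathcal{R}_{H}$ and bounded by $2\|f-f_{\mathcal{T}_{H}}\|_{\Omega_{\mathcal{R}_{H}}}$.

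The main obstacle is precisely the localization used in the middle two steps: one must simultaneously keep the form being integrated by parts supported in an $O(1)$-layer neighborhood of $\mathcal{R}_{H}$ \emph{and} retain the weighted estimates $\sum_{K}(h_{K}^{-2}\|\cdot\|_{K}^{2}+h_{K}^{-1}\|\tr\cdot\|_{\partial K}^{2})\lesssim\|dv_{1}\|^{2}$ (respectively $\lesssim\|\delta_{h}v_{2}\|^{2}$). This is why $\tilde{\pi}_{H}$ alone is not enough and has to be combined with a locally supported regular decomposition and with Scott--Zhang: the former makes the $\tilde{\pi}_{H}$-part pair to zero against $\sigma_{H}$ while the support shrinks to $\widetilde{\mathcal{R}}_{H}$, and the latter restores the $H^{1}$-regularity that renders element-wise integration by parts and $O(h)$-approximation admissible; carrying the successive neighborhood enlargements through while preserving $\#\widetilde{\mathcal{R}}_{H}\lesssim\#\mathcal{R}_{H}$ is the bookkeeping one has to do carefully.
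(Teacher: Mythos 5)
Your overall architecture matches the paper's: discrete Hodge decomposition of $\sigma_h-\sigma_H$, gap estimate for the harmonic piece via Lemma~\ref{gaph}, and localization through the Falk--Winther local cochain projection combined with Scott--Zhang. But the way you execute the localization has a genuine gap. You first form $v_{1}-\tilde{\pi}_{H}v_{1}$ (a difference of piecewise polynomials, hence only $H\Lambda$-regular), observe correctly that it is supported in $\widetilde{\mathcal{R}}_{H}$, and then invoke ``a regular decomposition of $v_{1}-\tilde{\pi}_{H}v_{1}$ that is itself supported near $\widetilde{\mathcal{R}}_{H}$ \dots\ on a slightly enlarged Lipschitz subdomain.'' No such tool is available with controlled constants: Theorem~\ref{rdqi} is a global decomposition on $\Omega$ and does not preserve supports, while the neighborhood of $\mathcal{R}_{H}$ is an arbitrary union of element patches whose Lipschitz character is not uniformly controlled as the adaptive algorithm proceeds --- it can be disconnected, thin, or wind through the mesh --- so the constant in a regular decomposition posed on it cannot be taken independent of the iteration. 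You flag this as ``the main obstacle,'' but the proposed resolution does not actually close it.

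The paper (following Demlow \cite{Demlow2017}) avoids the obstacle by reversing the order of the two operations. One first applies the \emph{global} regular decomposition to $v_{h}^{1}$, producing $z_{1}\in H^{1}\Lambda^{k-2}(\Omega)$ with $dv_{h}^{1}=dz_{1}=d\tilde{\pi}_{h}z_{1}$, and only then localizes by pairing $\sigma_{H}$ against $d(\tilde{\pi}_{h}z_{1}-\tilde{\pi}_{H}\tilde{\pi}_{h}z_{1})$. The support of $\tilde{\pi}_{h}z_{1}-\tilde{\pi}_{H}\tilde{\pi}_{h}z_{1}$ lies in $\widetilde{\mathcal{R}}_{H}$ by \eqref{local}, because $\tilde{\pi}_{h}z_{1}$ is locally an element of $V_{H}$ away from the refined set; and the weighted local quantities $h_{K}^{-1}\|\cdot\|_{K}$ and $h_{K}^{-1/2}\|\tr\cdot\|_{\partial K}$ are bounded by $|z_{1}|_{H^{1}(D_{K})}$ via the three-term splitting through the Scott--Zhang operator $\mathcal{I}_{h}$ (the estimate \eqref{Demlowapprox} quoted from \cite{Demlow2017}); summing squares over $\widetilde{\mathcal{R}}_{H}$ then needs only the global bound $\|z_{1}\|_{H^{1}}\lesssim\|dv_{h}^{1}\|$. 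This yields simultaneously the localized support and the $H^{1}$-based approximation estimates, which is exactly the combination your ordering cannot deliver. The same correction is needed in your treatment of $\delta_{h}v_{2}$ and of $d(\sigma_{h}-\sigma_{H})$; the remaining ingredients of your argument (the algebra with \eqref{DHL}, the orthogonality identities, the harmonic estimate, and the $k=1$ and $k=n$ cases) are sound and agree with the paper.
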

\begin{proof}
Consider $2\leq k\leq n$. We first focus on the proof of 
$\|\sigma_{h}-\sigma_{H}\|^{2}\lesssim\eta^{2}_{\sigma,\mathcal{T}_{H}}(\sigma_{H},\widetilde{\mathcal{R}}_{H}).$
Throughout the proof, $\widetilde{\mathcal{R}}_{H}$ always denotes the union of $\mathcal{R}_{H}$ and a collection of some neighboring elements of $\mathcal{R}_{H}$, $\#\widetilde{\mathcal{R}}_{H}\lesssim\#\mathcal{R}_{H}$. However, $\widetilde{\mathcal{R}}_{H}$ can vary from step to step and $\#\widetilde{\mathcal{R}}_{H}$ depends on the local property of $\tilde{\pi}_{h}$ and the coefficient-wise Scott--Zhang interpolation $\mathcal{I}_{h}$ \cite{SZ1990}. Similar to the proof of Theorem \ref{bdsigma}, we consider the discrete Hodge decomposition $\sigma_{h}-\sigma_{H}=dv_{h}^{1}+\delta_{h}v_{h}^{2}+q_{h}$, where $v_{h}^{1}\in\mathfrak{Z}_{h}^{\perp},\ v_{h}^{2}=dw_{h}\in\mathfrak{Z}_{h}^{*\perp}=\mathfrak{B}_{h},\ w_{h}\in\mathfrak{Z}^{\perp}_{h}$, and $q_{h}\in\mathfrak{H}_{h}.$ 

Let $v_{h}^{1}=d\varphi_1+z_1$ be a regular decomposition of $v_{h}^{1}$. In view of \eqref{local}, 
\begin{equation}\label{localsupp}
\supp(\tilde{\pi}_{h}z_1-\tilde{\pi}_{H}\tilde{\pi}_{h}z_1)\subseteq\bigcup_{K\in\widetilde{\mathcal{R}}_{H}} K. \end{equation}
Since $\tilde{\pi}_{h}$ is a cochain projection, $dv_{h}^{1}=dz_1=d\tilde{\pi}_{h}z_1$. Then by $\sigma_{h}\in\mathfrak{Z}_{h}^{\perp}$, $\sigma_{H}\in\mathfrak{Z}_{H}^{\perp}$, element-wise integration by parts, and \eqref{localsupp}, we have
\begin{equation}\label{disupper1}
\begin{aligned}
        \|dv_{h}^{1}\|^{2}&=\ab{\sigma_{h}-\sigma_{H}}{dv_{h}^{1}}\\
        &=\ab{-\sigma_{H}}{d(\tilde{\pi}_{h}z_1-\tilde{\pi}_{H}\tilde{\pi}_{h}z_1)}\\
        &=\sum_{K\in\widetilde{\mathcal{R}}_{H}}-\ab{\delta\sigma_{H}}{\tilde{\pi}_{h}z_1-\tilde{\pi}_{H}\tilde{\pi}_{h}z_1}_{K}-\int_{\partial K}\tr\star\sigma_{H}\wedge\tr(\tilde{\pi}_{h}z_1-\tilde{\pi}_{H}\tilde{\pi}_{h}z_1)\\
        &\lesssim\sum_{K\in\widetilde{\mathcal{R}}_{H}}\big(h_{K}^{2}\|\delta\sigma_{H}\|_{K}^{2}+h_{K}\|\lr{\tr\star\sigma_{H}}\|_{\partial K}^{2}\big)^{\frac{1}{2}}(I_{K}+J_{K}),
\end{aligned}
\end{equation}
where 
$$I_{K}=h_{K}^{-1}\|\tilde{\pi}_{h}z_1-\tilde{\pi}_{H}\tilde{\pi}_{h}z_1\|_{K},\quad J_{K}=h_{K}^{-\frac{1}{2}}\|\tr(\tilde{\pi}_{h}z_1-\tilde{\pi}_{H}\tilde{\pi}_{h}z_1)\|_{\partial K}.$$
Splitting $\tilde{\pi}_{h}z_1-\tilde{\pi}_{H}\tilde{\pi}_{h}z_1=(\tilde{\pi}_{h}z_1-\mathcal{I}_{h}z_1)+(\mathcal{I}_{h}z_1-\tilde{\pi}_{H}\mathcal{I}_{h}z_1)+\tilde{\pi}_{H}(\mathcal{I}_{h}z_1-\tilde{\pi}_{h}z_1),$ Demlow proved that (see the proof Lemma 3 in \cite{Demlow2017}) 
\begin{equation}\label{Demlowapprox}
I_{K}+J_{K}\lesssim|z_1|_{H^{1}(D_{K})}.
\end{equation}
Combining \eqref{disupper1}, \eqref{Demlowapprox} with the Cauchy--Schwarz inequality and $|z_1|_{H^{1}}\lesssim\|v_{h}^{1}\|_{H\Lambda^{k-2}}$$\lesssim\|dv_{h}^{1}\|$, we have
\begin{equation}\label{disupperdvh1}
\|dv_{h}^{1}\|^{2}\lesssim\sum_{K\in\widetilde{\mathcal{R}}_{H}}h_{K}^{2}\|\delta\sigma_{H}\|_{K}^{2}+h_{K}\|\lr{\tr\star\sigma_{H}}\|_{\partial K}^{2}.
\end{equation}

Let $w_{h}=d\varphi_{2}+z_{2}$ be a regular decomposition of $w_{h}$. It follows from \eqref{DHL} and $v_{h}^{2}=dw_{h}=dz_{2}=d\tilde{\pi}_{h}z_{2}$ that
\begin{equation*}
\begin{aligned}
\|\delta_{h}v_{h}^{2}\|^{2}&=\ab{d(\sigma_{h}-\sigma_{H})}{v_{h}^{2}}\\
        &=\ab{d(\sigma_{h}-\sigma_{H})}{d(\tilde{\pi}_{h}z_{2}-\tilde{\pi}_{H}\tilde{\pi}_{h}z_{2})}\\
        &=\ab{f-d\sigma_{H}}{d(\tilde{\pi}_{h}z_{2}-\tilde{\pi}_{H}\tilde{\pi}_{h}z_{2})}.
\end{aligned}
\end{equation*}
Then by similar argument in proving \eqref{disupperdvh1} and a series of bounds 
$$\|z_{2}\|_{H^{1}}\lesssim\|w_{h}\|_{H\Lambda^{k-1}}\lesssim\|dw_{h}\|=\|v_{h}^{2}\|\lesssim\|\delta_{h}v_{h}^{2}\|,$$ we have
\begin{equation}\label{disupperdeltavh2}
\begin{aligned}
        \|\delta_{h}v_{h}^{2}\|^{2}&\lesssim\sum_{K\in\widetilde{\mathcal{R}}_{H}}\big(h_{K}^{2}\|\delta(f-d\sigma_{H})\|_{K}^{2}+h_{K}\|\lr{\tr\star(f-d\sigma_{H}}]\|_{\partial K}^{2}\big)^{\frac{1}{2}}\\
        &\times\big(h_{K}^{-1}\|\tilde{\pi}_{h}z_{2}-\tilde{\pi}_{H}\tilde{\pi}_{h}z_{2}\|_{K}^{2}+h_{K}^{-\frac{1}{2}}\|\tr(\tilde{\pi}_{h}z_{2}-\tilde{\pi}_{H}\tilde{\pi}_{h}z_{2})\|_{\partial K}\big)\\
        &\lesssim\sum_{K\in\widetilde{\mathcal{R}}_{H}}h_{K}^{2}\|\delta(f-d\sigma_{H})\|_{K}^{2}+h_{K}\|\lr{\tr\star(f-d\sigma_{H})}\|_{\partial K}^{2}.
\end{aligned}
\end{equation}

In the end, the harmonic component is controlled by
\begin{equation}\label{disupperqh}
\begin{aligned}
\|q_{h}\|&=\ab{\sigma_{h}-\sigma_{H}}{\frac{q_{h}}{\|q_{h}\|}}\\
        &=\ab{\sigma_{h}-\sigma_{H}}{\frac{q_{h}}{\|q_{h}\|}-P_{\mathfrak{H}_{H}}\frac{q_{h}}{\|q_{h}\|}}\\
        &\leq\delta(\mathfrak{H}_{h},\mathfrak{H}_{H})\|\sigma_{h}-\sigma_{H}\|,
\end{aligned}
\end{equation}
where $\delta(\mathfrak{H}_{h},\mathfrak{H}_{H})\leq C(\mathcal{T}_{0},\Omega)<1$ by Lemma \ref{gaph}.

Combining the above three bounds \eqref{disupperdvh1}, \eqref{disupperdeltavh2}, and \eqref{disupperqh}, we prove that
\begin{equation}\label{locdisbdsigma}
\begin{aligned}
 \|\sigma_{h}-\sigma_{H}\|^{2}&=\|dv_{h}^{1}\|^{2}+\|\delta_{h}v_{h}^{2}\|^{2}+\|q_{h}\|^{2}\\
        &\leq\frac{1}{1-C(\mathcal{T}_{0},\Omega)^{2}}\big(\|dv_{h}^{1}\|^{2}+\|\delta_{h}v_{h}^{2}\|^{2}\big)\\
        &\lesssim\sum_{K\in\widetilde{\mathcal{R}}_{H}}\eta^{2}_{\sigma,\mathcal{T}_{H}}(\sigma_{H},K).
\end{aligned}
\end{equation}
When $k=1$, the proof is the same but without the boundary component $dv_{h}^{1}$. 

When $1\leq k\leq n-1$, let $\sigma_{h}-\sigma_{H}=d\varphi_{3}+z_{3}$ be a regular decomposition of $\sigma_{h}-\sigma_{H}.$ Then $d(\sigma_{h}-\sigma_{H})=dz_{3}=d\tilde{\pi}_{h}z_{3}$ and
\begin{align*}
\|d(\sigma_{h}-\sigma_{H})\|^{2}&=\ab{d(\sigma_{h}-\sigma_{H})}{d(\tilde{\pi}_{h}z_{3}-\tilde{\pi}_{H}\tilde{\pi}_{h}z_{3})}\\
&=\ab{f-d\sigma_{H}}{d(\tilde{\pi}_{h}z_{3}-\tilde{\pi}_{H}\tilde{\pi}_{h}z_{3})}.
\end{align*}
Following the proof of \eqref{disupperdeltavh2}, we obtain
\begin{equation}\label{locdisbddsigma}
\|d(\sigma_{h}-\sigma_{H})\|^{2}\lesssim\sum_{K\in\widetilde{\mathcal{R}}_{H}}h_{K}^{2}\|\delta(f-d\sigma_{H})\|_{K}^{2}+h_{K}\|\lr{\tr\star(f-d\sigma_{H})}\|_{\partial K}^{2}.
\end{equation}
When $k=n$, \begin{equation}\label{knf}
\|d(\sigma_h-\sigma_H)\|^2=\|f_{\Th}-f_{\mathcal{T}_H}\|^2=\sum_{K\in\mathcal{R}_H}\|f_{\Th}-f_{\mathcal{T}_H}\|_K^2.
\end{equation}
Combining \eqref{locdisbdsigma} ,\eqref{locdisbddsigma}, and \eqref{knf}, the proof is complete.
\end{proof}

In addition, we need several ingredients as in \cite{CKNS2008}. 
For a subset $\mathcal{M}\subset\Th$, define $\osc^2_{\Th}(\tau_{h},f,\mathcal{M}):=\sum_{K\in\mathcal{M}}\osc^2_{\mathcal{T}_{h}}(\tau_{h},f,K).$ Recall that $V_{h}^{k-1}$ and $V_{H}^{k-1}$ are finite element subspaces of $H\Lambda^{k-1}(\Omega)$ based on $\Th$ and $\mathcal{T}_{H}$, respectively. The first part of the  following lemma follows from the same proof of Corollary 3.5 in \cite{CKNS2008}. The second part is straightforward by the definition of $\osc_{\Th}$.
\begin{lemma}[properties of oscillation]\label{perturb}
Let $\mathcal{T}_{h}$ be a conforming refinement of $\mathcal{T}_{H}$. For any $\tau_{h}\in V_{h}^{k-1}$ and $\tau_{H}\in V_{H}^{k-1}$, we have
\begin{equation*}
\osc^2_{\mathcal{T}_{H}}(\tau_{H},f,\mathcal{T}_{H}\cap\mathcal{T}_{h})\leq2\osc^2_{\mathcal{T}_{h}}(\tau_{h},f,\mathcal{T}_{H}\cap\mathcal{T}_{h})+C_{\osc}\|d(\tau_{h}-\tau_{H})\|^{2}.
\end{equation*}
where $C_{\osc}$ depends only on $\mathcal{T}_{0}$. In addition, the dominance holds
$$\osc_{\Th}(\tau_{h},f,K)\leq\eta_{\sigma,\Th}(\tau_{h},K),\quad K\in\Th.$$
\end{lemma}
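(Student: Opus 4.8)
The plan is to treat the two assertions separately, since they have different characters. For the first inequality I will follow the structure of Corollary 3.5 in \cite{CKNS2008}. The oscillation $\osc_{\mathcal{T}_h}(\tau_h,f,K)$ consists of an interior term $h_K^2\|(\mathrm{id}-Q_K)\delta(f-d\tau_h)\|_K^2$ and a face term $h_K\|(\mathrm{id}-Q_{\partial K})\lr{\tr\star(f-d\tau_h)}\|_{\partial K}^2$. First I would fix an element $K\in\mathcal{T}_H\cap\mathcal{T}_h$ (so $K$ is common to both meshes, hence $h_K$ is the same for both) and compare the $\mathcal{T}_H$-oscillation on $K$ with the $\mathcal{T}_h$-oscillation. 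Writing $f-d\tau_H=(f-d\tau_h)+d(\tau_h-\tau_H)$ and using that both $Q_K$ and $Q_{\partial K}$ are orthogonal projections (so $\|\mathrm{id}-Q\|\le1$), a triangle inequality plus the elementary bound $(a+b)^2\le 2a^2+2b^2$ splits the $\mathcal{T}_H$-oscillation into twice the $\mathcal{T}_h$-oscillation plus a remainder involving $h_K^2\|\delta d(\tau_h-\tau_H)\|_K^2$ and $h_K\|\lr{\tr\star d(\tau_h-\tau_H)}\|_{\partial K}^2$. Since $\tau_h-\tau_H$ is a piecewise polynomial form on $\mathcal{T}_h$, inverse estimates on shape-regular meshes bound the first remainder term by $C\|d(\tau_h-\tau_H)\|_K^2$ and the second by $C\|d(\tau_h-\tau_H)\|^2_{\omega_K}$ over the local patch; summing over $K\in\mathcal{T}_H\cap\mathcal{T}_h$ and using finite overlap of patches yields the stated bound with $C_{\osc}$ depending only on $\mathcal{T}_0$ (through shape regularity and the number of refinement levels controlling the inverse-estimate constants). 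One must be slightly careful that on an interface of $K$ the neighbour across that face need not be common to both meshes, but the jump term only sees $d(\tau_h-\tau_H)$ from both sides, which is controlled the same way.

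For the dominance statement, I would simply unwind the definitions. For $2\le k\le n-1$ we have
\begin{equation*}
\eta^2_{\sigma,\Th}(\tau_h,K)=h_K^2\|\delta(f-d\tau_h)\|_K^2+h_K^2\|\delta\tau_h\|_K^2+h_K\|\lr{\tr\star(f-d\tau_h)}\|_{\partial K}^2+h_K\|\lr{\tr\star\tau_h}\|_{\partial K}^2,
\end{equation*}
while $\osc^2_{\Th}(\tau_h,f,K)=h_K^2\|(\mathrm{id}-Q_K)\delta(f-d\tau_h)\|_K^2+h_K\|(\mathrm{id}-Q_{\partial K})\lr{\tr\star(f-d\tau_h)}\|_{\partial K}^2$. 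Since orthogonal projections are norm-nonincreasing, $\|(\mathrm{id}-Q_K)\delta(f-d\tau_h)\|_K\le\|\delta(f-d\tau_h)\|_K$ and likewise for the face term, so each summand of $\osc^2_{\Th}$ is bounded by the corresponding summand of $\eta^2_{\sigma,\Th}$, and the extra nonnegative terms $h_K^2\|\delta\tau_h\|_K^2$ and $h_K\|\lr{\tr\star\tau_h}\|_{\partial K}^2$ only help. The cases $k=1$ (no $\tau_h$-terms, $\osc$ and the first two $\eta$-terms match up to the projections) and $k=n$ (where $\osc_{\Th}\equiv0$ by definition, so the inequality is trivial) are immediate. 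This gives $\osc_{\Th}(\tau_h,f,K)\le\eta_{\sigma,\Th}(\tau_h,K)$ with constant $1$.

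The main obstacle is the first inequality, specifically getting the remainder term into the clean form $C_{\osc}\|d(\tau_h-\tau_H)\|^2$ with a constant depending \emph{only} on $\mathcal{T}_0$: the interior piece needs $h_K^2\|\delta d(\tau_h-\tau_H)\|_K^2\lesssim\|d(\tau_h-\tau_H)\|_K^2$, which is an inverse inequality whose constant depends on the polynomial degree and shape regularity (hence ultimately on $\mathcal{T}_0$ and the fixed finite element family), and the face piece needs a discrete trace inequality $h_K\|\lr{\tr\star d(\tau_h-\tau_H)}\|_{\partial K}^2\lesssim\|d(\tau_h-\tau_H)\|^2_{\omega_K}$ applied to the piecewise polynomial form $d(\tau_h-\tau_H)$ on the refinement $\mathcal{T}_h$. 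Both are standard, but one should note that $d(\tau_h-\tau_H)$ is polynomial on $\mathcal{T}_h$ (not on $\mathcal{T}_H$), so the inverse estimates are taken with respect to the fine mesh — which is exactly why $h_K$ (the coarse size, equal to the fine size on common elements) appears with the correct power. The rest is bookkeeping: sum over common elements and invoke finite overlap.
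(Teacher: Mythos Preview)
Your proposal is correct and matches the paper's approach exactly: the paper states that the first inequality follows from the same proof as Corollary~3.5 in \cite{CKNS2008} (which is precisely the split-plus-inverse-estimate argument you outline), and that the dominance is straightforward from the definition of $\osc_{\Th}$ (your observation that $\|(\mathrm{id}-Q)\cdot\|\le\|\cdot\|$ for orthogonal projections). Your discussion of the face-jump subtlety and the care about which mesh carries the inverse estimates is appropriate and resolves the only nontrivial point.
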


Before presenting the optimality result, we make several assumptions.
\begin{assumption}\label{assump}
 We assume the following properties of Algorithm \textsf{AMFEM2}.
\begin{itemize}
\item[](a) The marking parameter $\theta$ satisfies $\theta\in(0,\theta_{*})$, where
$$\theta_{*}^{2}=\frac{C_{\low}}{1+(C_{\osc}+1)C_{\loc}}.$$
\item[](b) \textbf{Step 3} selects a set $\mathcal{M}_{\ell}$ with minimal cardinality.
\item[](c) \textbf{Step 4} generates a sequence of meshes $\{\mathcal{M}_{\ell}\}_{\ell\geq0}$ satisfying the cardinality estimate 
$$\#\mathcal{T}_{\ell}-\#\mathcal{T}_{0}\lesssim\sum_{i=0}^{\ell-1}\#\mathcal{M}_{i}.$$
\end{itemize}
\end{assumption}
Assumption (b) can be guaranteed by sorting $\{\eta_{\sigma,\mathcal{T}_\ell}(\sigma_\ell,K)\}_{K\in\mathcal{T}_\ell}$. Assumption (c) holds provided the newest vertex bisection when $n=2$ \cite{Mitchell1990} or its generalization when $n\geq3$ \cite{Traxler1997} is applied in \textbf{Step 4} and a matching condition holds the initial mesh $\mathcal{T}_0$, see \cite{Stevenson2008,BDD2004}.

Let $\mathbb{T}$ denote the collection of conforming refinements of $\mathcal{T}_0$ produced by the newest vertex bisection or its higher dimensional generalization. Let $\mathbb{T}_N=\{\mathcal{T}_{h}\in\mathbb{T}: \#\mathcal{T}_{h}-\#\mathcal{T}_{0}\leq N\}$. For $s>0$, define the approximation class
$$\mathbb{A}_{s}=\{(\tau,g)\in H\Lambda^{k-1}(\Omega)\times L^{2}\Lambda^{k}(\Omega): |(\tau,g)|_{s}=\sup_{N>0}\left\{N^{s}E(N;\tau,g)\right\}<\infty\},$$ 
where
$$E(N;\tau,g):=\inf_{\mathcal{T}_{h}\in\mathbb{T}_{N}}\inf_{\tau_{h}\in V_{h}^{k-1}}\left\{\|\tau-\tau_{h}\|_{H\Lambda^{k-1}}^{2}+\osc_{\Th}^{2}(\tau_{h},g,\mathcal{T}_{h})\right\}^{\frac{1}{2}}.$$
Combining the lower bound in Theorem \ref{eff}, the localized upper bound in Theorem \ref{disupper}, Lemma \ref{perturb}, and the contraction in Theorem \ref{contraction}, the proof of the optimality of \textsf{AMFEM2} is almost the same as the optimality proof in \cite{CKNS2008}, see Lemmas 5.9, 5.10 and Theorem 5.11 in \cite{CKNS2008} for details.
\begin{theorem}[quasi-optimality of \textsf{AMFEM2}]\label{opt}
Let Assumption \ref{assump} be satisfied. For $f\in H^1\Lambda^{k}(\mathcal{T}_0)$ with $1\leq k\leq n-1$ or $f\in L^2\Lambda^n(\Omega)$, let $\{(\sigma_{\ell},u_{\ell},p_{\ell}),\mathcal{T}_{\ell}\}_{\ell\geq0}$ be a sequence of finite element solutions and meshes generated by Algorithm \textsf{AMFEM2}. There exists a constant $C_{\opt}$ depending only on $\Omega$, $\mathcal{T}_0$, $\theta, \theta_*$, such that
\begin{equation*}
\left\{\|\sigma-\sigma_{\ell}\|_{H\Lambda^{k-1}}^{2}+\osc^{2}_{\mathcal{T}_{\ell}}(\sigma_{\ell},f,\mathcal{T}_{\ell})\right\}^{\frac{1}{2}}\leq C_{\opt}|(\sigma,f)|_{s}(\#\mathcal{T}_{\ell}-\#\mathcal{T}_{0})^{-s}.
\end{equation*}
\end{theorem}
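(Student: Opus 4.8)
The plan is to follow the now-standard optimality argument of Cascon--Kreuzer--Nochetto--Siebert \cite{CKNS2008}, with the three problem-specific inputs already established: local efficiency (Theorem \ref{eff}), the localized discrete upper bound (Theorem \ref{disupper}), the oscillation perturbation and dominance estimates (Lemma \ref{perturb}), and the contraction of \textsf{AMFEM2} (Theorem \ref{contraction}). The goal is to show that the quasi-error $\|\sigma-\sigma_\ell\|_{H\Lambda^{k-1}}^2 + \osc^2_{\mathcal{T}_\ell}(\sigma_\ell,f,\mathcal{T}_\ell)$ decays at the optimal rate $(\#\mathcal{T}_\ell-\#\mathcal{T}_0)^{-2s}$ whenever $(\sigma,f)\in\mathbb{A}_s$.

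First I would establish the \emph{optimal marking / D\"orfler property} lemma (the analogue of Lemma 5.9 in \cite{CKNS2008}): there is a threshold $\theta_*$ (exactly the one in Assumption \ref{assump}(a)) such that if $\theta<\theta_*$, then any refinement $\mathcal{T}_h$ of $\mathcal{T}_\ell$ with total quasi-error at most a suitable fraction of the quasi-error on $\mathcal{T}_\ell$ forces the set $\mathcal{R}_{\mathcal{T}_h\to\mathcal{T}_\ell}$ of refined elements to satisfy the D\"orfler bulk criterion $\eta_{\sigma,\mathcal{T}_\ell}(\sigma_\ell,\mathcal{R})\ge\theta\,\eta_{\sigma,\mathcal{T}_\ell}(\sigma_\ell,\mathcal{T}_\ell)$ for the enlargement $\widetilde{\mathcal{R}}$ of $\mathcal{R}$ appearing in Theorem \ref{disupper}. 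The proof combines: (i) the global efficiency $C_{\low}\eta^2_{\sigma}(K)\le\|\sigma-\sigma_\ell\|^2_{H\Lambda^{k-1}(\Omega_K)}+\sum_{K'\in\omega_K}\osc^2$ from Theorem \ref{eff} summed over unrefined elements, (ii) the localized upper bound $\|\sigma_h-\sigma_\ell\|^2_{H\Lambda^{k-1}}\le C_{\loc}\eta^2_{\sigma,\mathcal{T}_\ell}(\sigma_\ell,\widetilde{\mathcal{R}})$, (iii) the oscillation perturbation bound from Lemma \ref{perturb} together with the dominance $\osc_{\mathcal{T}_\ell}\le\eta_{\sigma,\mathcal{T}_\ell}$ (this is precisely why the paper's oscillation is defined so as to make the dominance hold), and a triangle inequality $\|\sigma-\sigma_\ell\|\le\|\sigma-\sigma_h\|+\|\sigma_h-\sigma_\ell\|$ to transfer the continuous error on $\mathcal{T}_h$ into the picture. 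Then I would prove the \emph{cardinality of the marked set} lemma (analogue of Lemma 5.10): using $(\sigma,f)\in\mathbb{A}_s$, pick a near-optimal mesh $\mathcal{T}_\varepsilon$ realizing total error $\lesssim\varepsilon$ with $\#\mathcal{T}_\varepsilon-\#\mathcal{T}_0\lesssim\varepsilon^{-1/s}|(\sigma,f)|_s^{1/s}$, take the overlay $\mathcal{T}_\ell\oplus\mathcal{T}_\varepsilon$, apply the marking-property lemma to conclude that the overlay's refined set satisfies D\"orfler, and hence by the minimal-cardinality choice in Assumption \ref{assump}(b) that $\#\mathcal{M}_\ell\lesssim (\text{quasi-error on }\mathcal{T}_\ell)^{-1/s}|(\sigma,f)|_s^{1/s}$.

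Finally I would assemble these with the contraction of Theorem \ref{contraction}: the contraction makes the quasi-error a geometrically decreasing sequence, so summing the cardinality bound over $i=0,\dots,\ell-1$ turns a geometric series of $(\text{quasi-error}_i)^{-1/s}$ into $\lesssim(\text{quasi-error}_\ell)^{-1/s}$, and invoking Assumption \ref{assump}(c) gives $\#\mathcal{T}_\ell-\#\mathcal{T}_0\lesssim\sum_i\#\mathcal{M}_i\lesssim|(\sigma,f)|_s^{1/s}(\text{quasi-error}_\ell)^{-1/(2s)}$ after noting the quasi-error is equivalent to $\|\sigma-\sigma_\ell\|^2+\osc^2$ (reliability of $\eta_\sigma$ from Theorem \ref{bdsigma} and dominance from Lemma \ref{perturb} show $\eta_{\sigma,\ell}^2$ is comparable to the full quasi-error in the contraction statement). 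Rearranging yields the claimed estimate with $C_{\opt}$ depending only on $\Omega,\mathcal{T}_0,\theta,\theta_*$.

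The main obstacle I anticipate is the marking-property lemma in the FEEC setting, specifically controlling the \emph{discrete} error $\|\sigma_h-\sigma_\ell\|_{H\Lambda^{k-1}}$ between two nested finite element solutions by an estimator supported only on the enlarged refined region $\widetilde{\mathcal{R}}_H$ with $\#\widetilde{\mathcal{R}}_H\lesssim\#\mathcal{R}_H$. This is exactly the content of Theorem \ref{disupper}, whose proof already does the hard work via the discrete Hodge decomposition, the Falk--Winther local bounded cochain projection \cite{FW2014}, and the Scott--Zhang interpolation \cite{SZ1990}; the remaining care is bookkeeping the constants $C_{\low}$, $C_{\loc}$, $C_{\osc}$ so that the threshold $\theta_*^2=C_{\low}/(1+(C_{\osc}+1)C_{\loc})$ comes out correctly, and making sure the harmonic component in the discrete decomposition, handled through the uniform gap bound $\delta(\mathfrak{H}_h,\mathfrak{H}_H)<1$ from Lemma \ref{gaph}, does not pollute the localization (it doesn't, because $\|q_h\|\le\delta(\mathfrak{H}_h,\mathfrak{H}_H)\|\sigma_h-\sigma_H\|$ is absorbed into the left-hand side). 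Once the marking-property lemma is in place, the rest is a verbatim adaptation of Lemmas 5.9--5.11 of \cite{CKNS2008}.
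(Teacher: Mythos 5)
Your proposal is correct and follows essentially the same route as the paper, which itself proves Theorem \ref{opt} by citing the argument of Lemmas 5.9, 5.10 and Theorem 5.11 in \cite{CKNS2008} with exactly the four ingredients you identify: local efficiency (Theorem \ref{eff}), the localized discrete upper bound (Theorem \ref{disupper}), the oscillation perturbation and dominance properties (Lemma \ref{perturb}), and the contraction of \textsf{AMFEM2} (Theorem \ref{contraction}). Your elaboration of the optimal-marking and cardinality lemmas, including the role of the threshold $\theta_*^2=C_{\low}/(1+(C_{\osc}+1)C_{\loc})$ and the overlay/minimal-cardinality argument, is a faithful and accurate expansion of what the paper leaves implicit.
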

    
\section{Concluding remarks}\label{conclusion} 
In this paper, we have developed two adaptive mixed finite element methods \textsf{AMFEM1} and \textsf{AMFEM2} for solving the Hodge Laplacian problem on bounded Lipschitz domains. \textsf{AMFEM1} is convergent w.r.t.~$\|\sigma-\sigma_h\|_{H\Lambda^{k-1}}^2+\|p-p_h\|+\|d(u-u_h)\|^2$ while \textsf{AMFEM2} is quasi-optimal w.r.t.~$\|\sigma-\sigma_h\|_{H\Lambda^{k-1}}$. Our results are presented in the framework of FEEC. For translation of results from FEEC into $H(\text{curl})$ and $H(\text{div})$ in $\mathbb{R}^{3}$, readers are referred to \cite{AFW2006,AFW2010,DH2014,CW2017}.
    
Although the a posteriori upper bound $\eta_{\text{DH}}(\Th)$ for
$NE_h=(\|\sigma-\sigma_{h}\|_{H\Lambda^{k-1}}^{2}+\|u-u_{h}\|^{2}_{H\Lambda^{k}}+\|p-p_{h}\|^{2})^{\frac{1}{2}}$ 
is available, we are not able to develop a convergent AMFEM for controlling the error $NE_h$ in the natural norm. The main difficulty comes from the term $\mu\|u_{h}\|$ in $\eta_{\text{DH}}$, which is an obstacle against proving efficiency and estimator reduction in Lemma \ref{lemma2}. However, in the absence of harmonic forms, the Arnold--Falk--Winther method \eqref{DHL} for the Hodge Laplacian reduces to a conforming method and $\eta_{\text{DH}}|_K=\big(\eta_{0}^2(K)+\eta_{-1}^2(K)+\eta_{\mathfrak{H}}^2(K,p_h)\big)^{\frac{1}{2}}$ is locally efficient up to data oscillation. In this case, the convergence of AMFEM based on $\eta_{\text{DH}}$ follows from the plain convergence result in \cite{MSV2008}.
    
\section*{Acknowledgments}
The author would like to thank Professor Alan Demlow for his remark on the fineness assumption about the initial mesh made in an earlier version of this paper. The author would also like to thank the referee for the comments that improve the presentation of this paper.

\bibliographystyle{siamplain}

\end{document}

%%%%%%%%%%%%%%%%%%%%%%%%%%%%%%%%%%%%%%%%%%%%%%%%%%%%%%%%%%%%%%%%%%%%%%%%%%%%%%